\newtheorem{thm}{Theorem}[section]
\newtheorem{cor}[thm]{Corollary}
\newtheorem{lem}[thm]{Lemma}
\newtheorem{pro}[thm]{Proposition}
\theoremstyle{definition}
\numberwithin{equation}{section}
\newtheorem*{thma}{Theorem A}
\newtheorem*{thmb}{Theorem B}
\newtheorem*{thmc}{Theorem C}
\newcommand{\Cn}{{\mathbb C}^n}
\newcommand{\incn}{\int_{\Cn}}
\newcommand{\re}{{\text{Re}}\,}
\begin{document}

\title
{Toeplitz operators on Fock-Sobolev type spaces}

\date\today
\thanks{}
\subjclass[2010]{Primary 47B35; Secondary 32A25, 30H20}
\keywords{Toeplitz operator, Fock-Sobolev space, weighted Fock space,
Schatten class}

\author[H. Cho]{Hong Rae Cho}
\address{Department of Mathematics, Pusan National University,
Busan 609-735, Republic of Korea}
\email{chohr@pusan.ac.kr}

\author[J. Isralowitz]{Joshua Isralowitz}
\address{Department of Mathematics and Statistics, University at Albany, 1400 Washington ave., Albany, NY 12222}
\email {jisralowitz@albany.edu}

\author[J.-C. Joo]{Jae-Cheon Joo}
\address{Department of Mathematics, Pohang University of Science and Technology,
Namgu, Pogang,  Republic of Korea}
\email{joo@postech.ac.kr}

\thanks{The first author was supported by the National Research Foundation of
Korea(NRF) grant
funded by the Korea government(MEST) (NRF-2011-0013740).  Also, part of this work was done while the second author was a postdoctoral researcher at Georg-August Universit\"{a}t G\"{o}ttingen, where he was supported by an
Emmy-Noether grant of Deutsche Forschungsgemeinschaft}

\begin{abstract}
In this paper, we characterize operator-theoretic properties (boundedness,
compactness, and Schatten class membership) of Toeplitz operators with positive measure symbols on weighted Fock-Sobolev spaces of fractional order.
\end{abstract}

\maketitle

%%%%%%%%%%%%%%%%%%%%%%%%%%%%%%%%%%%%%%%%%%%
\section{Introduction}
The primary aim of this paper is to develop the standard theory regarding
boundedness, compactness, and Schatten class membership of Toeplitz operators with nonnegative
measure symbols on weighted Fock-Sobolev spaces of fractional order.

\medskip

Let $\Cn$ be the $n$-dimensional complex Euclidean space and let $dv$ be the Lebesgue volume measure on $\Cn$
that is
normalized so that
$\incn e^{-|z|^2}\, dv(z)=1$.
For a real number $\alpha$, let
\begin{align*}\label{dvalpha}
dv_\alpha(z)=\frac{dv(z)}{(1+|z|)^\alpha}.
\end{align*}

Then for every $0<p<\infty$, we denote by $L^p_\alpha(\mathbb C^n)$ the space of measurable functions $h$ such that
\begin{align*}
\|h\|_{L^p_\alpha}:=\left(
\incn\left|h(z)e^{-\frac12|z|^2}\right|^p \, dv_\alpha(z)
\right)^{\frac 1p}<\infty.
\end{align*}

\noindent Let $H(\Cn)$ be the set of entire functions on $\Cn$.
Then for a given $0 < p < \infty$,
the \emph{weighted Fock space} $F^p_\alpha$ (with $\|\cdot\|_{F^p_\alpha} := \|\cdot\|_{L^p_\alpha})$  is defined as
\begin{align*}
F^p_\alpha := \left\{ f\in H(\Cn) =
\|f\|_{L^p_\alpha}<\infty \right\}.
\end{align*}

\medskip

If $p=2$, then $F^2_\alpha$ equipped with the natural inner product defined by
\begin{equation} \label{CanInnProd}
\langle f, g\rangle_{L^2_\alpha} = \incn f(w) \overline{g(w)} e^{-|w|^2} dv_\alpha(z)
\end{equation} is a reproducing kernel Hilbert space for every real $\alpha$.
However, it is difficult compute the
reproducing kernel of $F^2_\alpha$ explicitly with respect to the inner product \eqref{CanInnProd}.
To avoid this difficulty, in Section \ref{RepKerSec} we discuss a slightly different inner product $\langle \cdot , \cdot
\rangle_\alpha$ on $F^2 _\alpha $ that generates an equivalent norm and enables us to
compute the reproducing kernel of $F^2 _\alpha$ (with respect to this new inner product.) In particular, if $ d\widetilde{v}_\alpha (z):=\frac{dv(z)}{|z|^\alpha}$ then for $\alpha \leq 0$ we will let
\begin{equation}  \label{posinn}
\langle f, g\rangle_\alpha := \incn f(z) \overline{g(z)} e^{-|z|^2}
d\widetilde{v}_\alpha(z),
\end{equation}
and for $\alpha > 0$ we let
\begin{equation} \label{neginn}
 \langle f, g\rangle_\alpha
 := \incn f^-_{\frac\alpha 2}(z) \overline{g^-_{\frac\alpha 2}(z)} e^{-|z|^2} dv(z)
+ \incn f^+_{\frac\alpha 2}(z) \overline{g^+_{\frac\alpha 2}(z)} e^{-|z|^2}
d\widetilde{v}_\alpha(z) \end{equation} where $f^-_{\frac\alpha 2}$ is the Taylor expansion of $f$ up to order  $\alpha/2$ and $f^+ _{\frac\alpha 2} = f - f^-_{\frac\alpha 2}$.

Note that the spaces $F^2 _\alpha$ are in fact very natural generalizations of the so called ``weighted Fock-Sobolev spaces" from \cite{CCK2}.  In particular, given real numbers $\alpha$, $s$ and $0<p<\infty$, the weighted Fock-Sobolev
space $F^p_{\alpha,s}$ is defined as the space of entire functions $f$ such that
$\|\mathcal R^s f\|_{L^p_\alpha}<\infty$, where $\mathcal{R}^s$ is the radial
fractional derivative operator of order $s$ (see \cite{CCK2}).
It was then proved in \cite{CCK2} that the Fock-Sobolev space
$F^p_{\alpha,s}$ coincides with the weighted Fock space
$F^p_{\alpha-sp}$.

Now let $K^\alpha(z,w)$ denote the reproducing kernel for $F^2_\alpha$ with respect
to $\langle \cdot , \cdot \rangle_\alpha$.
If $\mu$ is a complex Borel measure on $\Cn$, then we will define the \emph{Toeplitz operator}
$T^\alpha_\mu$ by the formula \begin{equation} T_\mu ^\alpha f (z) := \incn f(w) K^\alpha (z, w) e^{-|w|^2}
 |w| ^{-\alpha} \, d\mu(w) \label{ToepOpDefNeg} \end{equation} if $\alpha \leq 0$ and \begin{align} T_\mu ^\alpha f (z) := \incn & f_{\frac{\alpha}{2}} ^- (w) \overline{(K_z ^\alpha)^- _{\frac{\alpha}{2}} (w)} e^{-|w|^2} \, d\mu(w)  \label{ToepOpDefPos}  \\ & + \incn f_\frac{\alpha}{2} ^+ (w) \overline{(K_z ^\alpha)^+ _{\frac{\alpha}{2}} (w)} \frac{e^{- |w|^2}}{|w|^\alpha} \, d\mu(w)  \nonumber   \end{align} if $\alpha > 0$ where here $K_z ^\alpha (w) = K_z ^\alpha (w, z)$.  Note that if $\mu$ satisfies \begin{equation}\label{Condition M}
\incn |K^\alpha(z,w)|^2 \frac{e^{-|w|^2}}{(1 + |w|)^\alpha} \,  d|\mu|(w)< \infty
\end{equation} for every $z\in \Cn$,
then the Toeplitz operator $T^\alpha_\mu$ is densely defined on
$F^2_\alpha$.  If $\mu$ satisfies condition \eqref{Condition M} and $\tilde \mu$ is the \emph{Berezin transform}
of $\mu$ (see Section \ref{BCToepOp} for the precise definition), then the following are the main results of this paper:

\begin{thma}\label{t:thma}
Let $\mu$ be a nonnegative Borel measure on $\Cn$ satisfying (\ref{Condition M}) and let $\alpha$ be a real number. Then the following are equivalent for every $1 \leq p < \infty$.
\begin{itemize}
\item[(a)] The Toeplitz operator $T^\alpha_\mu$ extends to a bounded operator on
$F^p_\alpha$.
\item[(b)] The Berezin transform $\tilde\mu$ is bounded on $\Cn$.
\item[(c)] The function $z \mapsto \mu (B(z,r)) $ is bounded for
every $r > 0$.
\end{itemize}
\end{thma}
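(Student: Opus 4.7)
The plan is to prove the chain of implications (a) $\Rightarrow$ (b) $\Rightarrow$ (c) $\Rightarrow$ (a). The scheme is standard in the Fock setting, but the polynomial weight $|w|^{-\alpha}$ and the splitting in \eqref{ToepOpDefPos} complicate every step, so the pointwise estimates on $K^\alpha$ developed in Section \ref{RepKerSec} (together with the resulting $L^p$ behaviour of the normalized reproducing kernels $k^\alpha_z = K^\alpha_z/\|K^\alpha_z\|_\alpha$) will be used repeatedly.

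For (a) $\Rightarrow$ (b), I would write the Berezin transform in operator form $\tilde\mu(z) = \langle T^\alpha_\mu k^\alpha_z, k^\alpha_z\rangle_\alpha$, which follows directly from \eqref{ToepOpDefNeg}–\eqref{ToepOpDefPos} after the reproducing identity. If $p=2$ the conclusion is immediate. For $p\neq 2$, identify the dual of $F^p_\alpha$ with $F^{p'}_\alpha$ under the pairing $\langle\cdot,\cdot\rangle_\alpha$, apply H\"older, and use that the normalized kernels $k^\alpha_z$ have uniformly bounded $F^q_\alpha$-norm for every $1\leq q\leq\infty$ (a consequence of the kernel estimates from Section \ref{RepKerSec}). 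This yields $\tilde\mu(z)\lesssim \|T^\alpha_\mu\|_{F^p_\alpha\to F^p_\alpha}$.

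For (b) $\Rightarrow$ (c), I would use a pointwise lower bound of the form
\begin{equation*}
\frac{|K^\alpha(z,w)|^2 e^{-|w|^2}}{\|K^\alpha_z\|_\alpha^2\,(1+|w|)^\alpha}\;\geq\; c_r\,\frac{\mathbf 1_{B(z,r)}(w)}{(1+|z|)^\alpha}\cdot(1+|z|)^\alpha,
\end{equation*}
valid on $B(z,r)$ and obtained by combining the explicit near-diagonal behaviour of $K^\alpha$ with the estimate $\|K^\alpha_z\|_\alpha^2\sim e^{|z|^2}(1+|z|)^\alpha$. Integrating against $\mu$ gives $\mu(B(z,r))\lesssim_r\tilde\mu(z)$, which is the desired bound.

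For (c) $\Rightarrow$ (a), which is the main obstacle, I would cover $\Cn$ by a lattice of balls $B(z_k,r)$ of bounded overlap and decompose $d\mu=\sum_k d\mu_k$ with $\mu_k=\mu|_{B(z_k,r)}$. The integral kernel of $T^\alpha_\mu$ satisfies
\begin{equation*}
|T^\alpha_\mu f(z)|\;\leq\;\int_{\Cn}|f(w)|\,|K^\alpha(z,w)|\,\frac{e^{-|w|^2}}{(1+|w|)^\alpha}\,d\mu(w),
\end{equation*}
(up to the modification caused by \eqref{ToepOpDefPos} for $\alpha>0$, which I would handle by splitting into the finite-dimensional Taylor part and its complement as in \eqref{neginn}). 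Using the Gaussian off-diagonal decay $|K^\alpha(z,w)|e^{-\frac12(|z|^2+|w|^2)}(1+|z|)^{-\alpha/2}(1+|w|)^{-\alpha/2}\lesssim e^{-c|z-w|^2}$ from Section \ref{RepKerSec}, together with the uniform bound $\mu_k(\Cn)=\mu(B(z_k,r))\leq M$ from (c), I would estimate each piece in $F^p_\alpha$ and sum. For $p=2$ this is cleanest via Schur's test with weight $h(w)=e^{\varepsilon|w|^2}(1+|w|)^\tau$; for general $1\leq p<\infty$ the estimate reduces, after the kernel bound, to a weighted $L^p$ convolution inequality on $\Cn$ whose kernel is $e^{-c|z-w|^2}$, combined with the discrete bound $\sum_k M\,e^{-c|z-z_k|^2}\lesssim M$. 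The hard part is keeping track of the polynomial weights and, for $\alpha>0$, controlling the projection onto the low-degree Taylor part, which is finite rank but couples the two integrals in \eqref{ToepOpDefPos}; once that is isolated, it is handled separately by trivial bounds on polynomials of degree at most $\alpha/2$.
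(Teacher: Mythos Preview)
Your chain (a) $\Rightarrow$ (b) $\Rightarrow$ (c) $\Rightarrow$ (a) is the same as the paper's, and the step (b) $\Rightarrow$ (c) via the near-diagonal lower bound on $K^\alpha$ is exactly Lemma \ref{l:berezin}. Two points deserve comment, one a genuine slip and one a difference in strategy.

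\textbf{The slip in (a) $\Rightarrow$ (b).} Your assertion that the normalized kernels $k^\alpha_z$ have uniformly bounded $F^q_\alpha$-norm for every $1\le q\le\infty$ is false: Proposition \ref{ksnorm} gives $\|k^\alpha_z\|_{F^q_\alpha}\approx(1+|z|)^{(\frac12-\frac1q)\alpha}$, which blows up or decays according to the sign of $(\frac12-\frac1q)\alpha$. What is true, and what your duality argument actually needs, is that the \emph{product} $\|k^\alpha_z\|_{F^p_\alpha}\,\|k^\alpha_z\|_{F^{p'}_\alpha}$ is uniformly bounded, since the exponents $(\frac12-\frac1p)\alpha$ and $(\frac12-\frac1{p'})\alpha$ cancel. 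With that correction your argument goes through. The paper avoids the duality pairing altogether here: it uses $\tilde\mu(z)\approx(1+|z|)^{-\alpha}e^{-|z|^2}\,[T^\alpha_\mu K^\alpha(\cdot,z)](z)$ directly from the definition, then invokes the pointwise estimate of Lemma \ref{mvplem} together with Proposition \ref{ksnorm} to bound the right-hand side by $\|T^\alpha_\mu\|_{F^p_\alpha\to F^p_\alpha}$.

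\textbf{The route in (c) $\Rightarrow$ (a).} Here your approach and the paper's genuinely diverge. You propose a direct Schur/convolution argument: majorize $|T^\alpha_\mu f(z)|$ by an integral against $|K^\alpha(z,w)|e^{-|w|^2}(1+|w|)^{-\alpha}$, use the Gaussian off-diagonal decay, and sum over a lattice decomposition of $\mu$. This can be made to work, but tracking the polynomial weights in the $F^p_\alpha$-norm on both sides is delicate, and the $\alpha>0$ splitting adds further bookkeeping. The paper instead routes everything through the \emph{Carleson measure} characterization (Theorem \ref{t:Carleson}) and the bilinear identity for $T^\alpha_\mu$ (Lemma \ref{l:BilForm}): once (c) is known to be equivalent to $\mu$ being Fock-Carleson, one has $|\langle T^\alpha_\mu f,g\rangle_\alpha|\lesssim\|fg\|_{\widetilde{F^1_\alpha}}\lesssim\|f\|_{F^p_\alpha}\|g\|_{F^{p'}_\alpha}$ by H\"older, and boundedness follows from the duality $(F^p_\alpha)^*=F^{p'}_\alpha$ under $\langle\cdot,\cdot\rangle_\alpha$. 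This buys a shorter and more conceptual proof in which the polynomial weights and the $\alpha>0$ splitting are absorbed once, into the Carleson characterization, rather than carried through a Schur-type estimate; your approach has the virtue of being self-contained and not requiring the duality theory of \cite{CCK2}.
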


\begin{thmb}\label{t:thmb}
Let $\mu$ be a nonnegative Borel measure on $\Cn$ satisfying (\ref{Condition M}) and let $\alpha$ be a real number. Then the following are equivalent for every $1 \leq p < \infty$.\begin{itemize}
\item[(a)] $T^\alpha_\mu$ extends to a compact operator on $F^p_\alpha$.
\item[(b)] $\tilde\mu$ is bounded and vanishes at infinity.
\item[(c)] The function $z \mapsto \mu (B(z,r)) $ is bounded and
vanishes at infinity for every $r > 0$.

\end{itemize}
\end{thmb}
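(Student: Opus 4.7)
My plan is to prove the cycle (a) $\Rightarrow$ (b) $\Rightarrow$ (c) $\Rightarrow$ (a). Since Theorem A already provides the ``bounded'' halves of each characterization, in every implication only the ``vanishes at infinity'' content must be added. For (a) $\Rightarrow$ (b), I would work with the normalized reproducing kernels $k_z^p := K_z^\alpha / \|K_z^\alpha\|_{F^p_\alpha}$. Using the explicit formula for $K^\alpha$ developed in Section \ref{RepKerSec}, these are uniformly bounded in $F^p_\alpha$ and converge to zero uniformly on compact subsets of $\Cn$ as $|z|\to\infty$, hence weakly in $F^p_\alpha$ for $1 < p < \infty$. Compactness of $T_\mu^\alpha$ then forces $\|T_\mu^\alpha k_z^p\|_{F^p_\alpha}\to 0$; pairing with $k_z^{p'}$ normalized in $F^{p'}_\alpha$ recovers $\tilde\mu(z)$ up to a known normalization factor, so $\tilde\mu(z)\to 0$. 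The edge case $p=1$ can be handled by reducing to an intermediate $p>1$ via the $p$-independence of compactness, which itself follows from the splitting argument in the last step combined with Theorem A.

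For (b) $\Rightarrow$ (c) the argument is a pointwise kernel-to-ball comparison. The lower bound $|K^\alpha(w,z)|^2 e^{-|z|^2-|w|^2}(1+|w|)^{-\alpha} \gtrsim_r 1$ for $w\in B(z,r)$, inherited from the explicit formula for $K^\alpha$, yields $\mu(B(z,r)) \lesssim_r \tilde\mu(z)$ after integration, and the bounded/vanishing-at-infinity properties immediately transfer from $\tilde\mu$ to $z\mapsto\mu(B(z,r))$.

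For (c) $\Rightarrow$ (a) I would split $\mu = \mu_R + \mu^R$ with $\mu_R := \chi_{B(0,R)}\mu$. The compactly supported piece $T_{\mu_R}^\alpha$ is compact on every $F^p_\alpha$, being approximable in norm by the finite-rank operators obtained by truncating the Taylor series of $w\mapsto K^\alpha(\cdot,w)$ uniformly on $\mathrm{supp}\,\mu_R$. For the tail, the quantitative form of Theorem A gives $\|T_{\mu^R}^\alpha\|_{F^p_\alpha\to F^p_\alpha} \lesssim \sup_z \mu^R(B(z,r))$, and this supremum vanishes as $R\to\infty$ by condition (c): it is zero on $|z|<R-r$ and bounded by $\sup_{|z|\geq R-r}\mu(B(z,r))$ on the complement. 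Hence $T_\mu^\alpha$ is an operator-norm limit of compact operators and is therefore compact.

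The main obstacle I anticipate is the last step, specifically the need for Theorem A in the quantitative form $\|T_\mu^\alpha\|_{F^p_\alpha\to F^p_\alpha}\lesssim \sup_z \mu(B(z,r))$ uniformly in the measure, so that the bound genuinely transmits to a vanishing tail. A secondary source of friction running through all three steps is the piecewise definition \eqref{posinn}--\eqref{neginn} of $\langle\cdot,\cdot\rangle_\alpha$ when $\alpha>0$, which forces every kernel lower bound, Berezin-transform identity, and Taylor-truncation approximation to be verified on the two summands $f^-_{\alpha/2}$ and $f^+_{\alpha/2}$ separately.
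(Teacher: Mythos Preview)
Your implications (a) $\Rightarrow$ (b) and (b) $\Rightarrow$ (c) match the paper's route exactly: the paper also tests on normalized kernels, invokes Proposition~\ref{ksnorm} and Corollary~\ref{c: NormRepKerAndCompactness} to get $\|T_\mu^\alpha(k_z^\alpha/\|k_z^\alpha\|_{F^p_\alpha})\|_{F^p_\alpha}\to 0$, and then uses the lower kernel bound of Lemma~\ref{l:berezin} for (b) $\Rightarrow$ (c).

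For (c) $\Rightarrow$ (a) your approach is genuinely different. The paper passes through an auxiliary condition~(d), ``$\mu$ is a vanishing weighted Fock--Carleson measure,'' and proves (c) $\Rightarrow$ (d) $\Rightarrow$ (a) via a fairly long argument with lattice covers, equicontinuity of $|f^+_{\alpha/p}(z)|^p|z|^{-\alpha}$ at the origin, and a duality estimate of $\|T_\mu^\alpha f_j\|$ against vanishing Carleson norms. Your splitting $\mu=\mu_R+\mu^R$ is shorter and more direct: the compactness of $T_{\mu_R}^\alpha$ does follow from Taylor truncation (since on $\{|w|\le R\}$ the tail of $K^\alpha(\cdot,w)$ goes to $0$ uniformly by Cauchy estimates on the normal family $\{g:\|g\|_{F^q_\alpha}\le1\}$), and the quantitative Theorem~A you need is contained in the proof of Theorem~\ref{t:Boundedness} via Theorem~\ref{t:Carleson}. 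What the paper's longer route buys is the vanishing--Carleson characterization itself, which is of independent interest; what yours buys is economy.

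There is, however, a gap in your handling of $p=1$ in (a) $\Rightarrow$ (b). You propose to reduce to $p>1$ ``via the $p$-independence of compactness, which itself follows from the splitting argument in the last step combined with Theorem~A.'' This is circular: the splitting argument gives only (c) $\Rightarrow$ (a), and Theorem~A says nothing about compactness, so you have not established that compactness on $F^1_\alpha$ implies compactness on any $F^p_\alpha$ with $p>1$ (or implies~(c)) without first closing the cycle at $p=1$. The paper meets this head-on: Corollary~\ref{c: NormRepKerAndCompactness} treats $p=1$ by a separate sequential argument, extracting a norm-convergent subsequence of $T_\mu^\alpha(k_{z_j}^\alpha/\|k_{z_j}^\alpha\|_{F^1_\alpha})$ and using the reproducing property plus the kernel estimates of Lemma~\ref{weightedmodified} to show the limit is identically zero. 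You will need either that argument or a genuine interpolation step to close the $p=1$ case.
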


\begin{thmc}\label{t:thmc}
Let $\mu$ be a nonnegative Borel measure on $\Cn$ satisfying (\ref{Condition M}) and let $\alpha$ be a real number. Then the following are equivalent for every $0 < p < \infty$.
\begin{itemize}
\item[(a)] $T^\alpha_\mu$ extends to a Schatten $p$ class operator.
\item[(b)] $\tilde\mu \in L^p (\Cn, dv)$.
\item[(c)] The function $ z \mapsto  \mu(B(z,r)) $ is in
$L^p (\Cn, dv)$ for any $r > 0$.
\item[(d)] Let $r > 0$ and let $\{a_j\} = s \mathbb{Z}^{2n}$, where $0 < s <  r\sqrt{2/n}$ so that $\{B(a_j, r)\}$ covers $\Cn$. Then the sequence
$ \{\mu (B(a_j, r))\} \in \ell^p$.
\end{itemize}
\end{thmc}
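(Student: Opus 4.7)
My plan follows the standard strategy for Schatten-class Toeplitz operators on Fock spaces, carefully adapted to the weighted pairing $\langle\cdot,\cdot\rangle_\alpha$ and reproducing kernel $K^\alpha$ of Section~\ref{RepKerSec}.

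\textbf{Geometric equivalences (b)$\Leftrightarrow$(c)$\Leftrightarrow$(d).} From the explicit formula for $K^\alpha$ I expect pointwise Gaussian off-diagonal decay of the form $|k_z^\alpha(w)|^2 e^{-|w|^2}/(1+|w|)^\alpha \lesssim e^{-c|z-w|^2}$ together with a matching lower bound on $B(z,r)$ (where $k_z^\alpha$ is the normalized reproducing kernel with respect to $\langle\cdot,\cdot\rangle_\alpha$). Integrating against $\mu$ yields $\mu(B(z,r)) \lesssim \tilde\mu(z) \lesssim \sum_j e^{-c|z-a_j|^2}\mu(B(a_j,r))$. Taking $L^p$-norms and applying Young's inequality to the convolution on the right gives (b)$\Leftrightarrow$(c), while (c)$\Leftrightarrow$(d) is a routine discretization using that $\{a_j\}=s\mathbb{Z}^{2n}$ covers $\mathbb C^n$ with bounded overlap when $s<r\sqrt{2/n}$.

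\textbf{Necessity (a)$\Rightarrow$ remainder.} For $p\geq 1$ I would use the identity $\tilde\mu(z)=\langle T_\mu^\alpha k_z^\alpha, k_z^\alpha\rangle$ together with the general fact $\int|\langle Tk_z,k_z\rangle|^p\,dv(z)\lesssim\|T\|_{S_p}^p$ (valid on any RKHS with a Carleson-type family of normalized kernels), which is essentially a Russo--Dye-style convexity statement in the noncommutative $L^p$ setting. For $0<p<1$ this convex averaging argument is unavailable, so I would instead derive (d) from (a) directly via the sampling inequality $\sum_j\|Tk_{a_j}^\alpha\|_{F_\alpha^2}^p\lesssim\|T\|_{S_p}^p$ (which holds because $\{k_{a_j}^\alpha\}$ is a frame for $F_\alpha^2$), combined with the lower bound $\|T_\mu^\alpha k_{a_j}^\alpha\|\gtrsim \mu(B(a_j,r))$ obtained by restricting the defining integral to $B(a_j,r)$.

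\textbf{Sufficiency (d)$\Rightarrow$(a).} Partition $\mathbb C^n$ into Borel sets $D_j\subset B(a_j,r)$ and set $\mu_j=\chi_{D_j}\mu$, so that $T_\mu^\alpha=\sum_j T_{\mu_j}^\alpha$. A rank-one--type estimate should give $\|T_{\mu_j}^\alpha\|_{S_p}\lesssim \mu(B(a_j,r))$ uniformly in $p$. For $0<p\leq 1$ the $p$-triangle inequality in $S_p$ then concludes immediately. For $p>1$ the naive triangle inequality loses too much, so I would bound the matrix $[\langle T_\mu^\alpha k_{a_i}^\alpha, k_{a_j}^\alpha\rangle]$, whose off-diagonal entries decay like $e^{-c|a_i-a_j|^2}\sqrt{\mu(B(a_i,r))\mu(B(a_j,r))}$, and apply a Schur-type test (or equivalently interpolate between $p=1$, handled above, and $p=2$, where the Hilbert--Schmidt norm is a direct integral).

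The main obstacle is the $0<p<1$ regime, where both the Berezin-transform averaging and matrix Schur tests become unavailable and one is forced to work with sharper frame/sampling estimates that demand tight control of the kernel. A secondary technical point is the case $\alpha>0$: because the inner product and Toeplitz operator split according to $f=f^-_{\alpha/2}+f^+_{\alpha/2}$, all the kernel decay estimates above must be carried out separately on the Taylor-truncation and its complement and then recombined uniformly, which was not an issue in the classical unweighted Fock theory.
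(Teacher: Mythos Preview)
Your geometric equivalences (b)$\Leftrightarrow$(c)$\Leftrightarrow$(d) and the $p\ge1$ part of the scheme are essentially what the paper does (the paper gets (a)$\Rightarrow$(b) from $\langle T^p k_z,k_z\rangle_\alpha\ge\langle Tk_z,k_z\rangle_\alpha^p$ plus the trace formula $\mathrm{tr}(T^p)\approx\int\widetilde{T^p}\,dv$, and gets (c)$\Rightarrow$(a) by interpolating $\varphi\mapsto T^\alpha_{|\varphi|}$ between $L^1\to S_1$ and $L^\infty\to S_\infty$ and then comparing quadratic forms $T^\alpha_{\mu_2}\lesssim T^\alpha_{\mu(B(\cdot,r))}$; your Schur-test variant is a reasonable alternative).

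The genuine gap is your (a)$\Rightarrow$(d) for $0<p<1$. The ``sampling inequality'' $\sum_j\|Tk^\alpha_{a_j}\|^p\lesssim\|T\|_{S_p}^p$ is \emph{false} for $p<2$: for a rank-one $S=\langle\,\cdot\,,u\rangle v$ one has $\|S\|_{S_p}=1$ while $\sum_j\|Se_j\|^p=\sum_j|\langle e_j,u\rangle|^p$, which is unbounded over orthonormal bases. The frame property only yields a bounded synthesis operator $A:e_j\mapsto k^\alpha_{a_j}$, hence $\|TA\|_{S_p}\lesssim\|T\|_{S_p}$, but this does not control $\sum_j\|TAe_j\|^p$. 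Your companion lower bound $\|T^\alpha_\mu k^\alpha_{a_j}\|\gtrsim\mu(B(a_j,r))$ is also not justified by ``restricting the integral'', since the complementary piece can cancel; only the diagonal quantity $\langle T^\alpha_\mu k^\alpha_{a_j},k^\alpha_{a_j}\rangle_\alpha\gtrsim\mu(B(a_j,r))$ is directly available. The paper's remedy is essential here: pass to a sparse sublattice $\{b_j\}$ with separation $>2R$, set $\nu=\sum_j\mu|_{B(b_j,r)}$, form $A^*T^\alpha_\nu A=D+E$ with $D$ the diagonal part in the basis $\{e_j\}$, and use Gaussian off-diagonal decay to get $\|E\|_{S_p}^p\le C e^{-cR^2}\sum_j\mu(B(b_j,r))^p$ while $\|D\|_{S_p}^p\gtrsim\sum_j\mu(B(b_j,r))^p$; choosing $R$ large and using the $p$-triangle inequality absorbs $E$.

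A smaller issue: in your (d)$\Rightarrow$(a) for $0<p\le1$, the bound $\|T^\alpha_{\mu_j}\|_{S_p}\lesssim\mu(B(a_j,r))$ is not a ``rank-one-type estimate'' (the operator is not finite rank) and for $p<1$ is strictly stronger than the trace bound, so it needs real proof. The paper avoids this entirely by going (b)$\Rightarrow$(a) via operator concavity, $\langle T^p k_z,k_z\rangle_\alpha\le\langle Tk_z,k_z\rangle_\alpha^p$, so that $\mathrm{tr}((T^\alpha_\mu)^p)\lesssim\int\tilde\mu^p\,dv$.
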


Note that Theorems A, B, and C for the ordinary Fock spaces $F^p$ (which is defined as the space $F^p _\alpha$ when $\alpha = 0$) were proved by
Isralowitz and Zhu in \cite{IZ}, and results that are similar to Theorems A, B, and C were proved in \cite{IVW, SV} for a very large class of generalized Fock spaces (see Section $\ref{SectionRAOP}$ for a more detailed comparison between our results and those of \cite{IVW, SV}.)

Furthermore, note that condition (c) in Theorems A, B, and C and condition (d) in Theorem C are independent of $r > 0$ in the sense that if one of these conditions is true for some $r_0 > 0$ then they are true for every $r > 0$ (see \cite{IZ} for a proof of this fact). Finally, note that the proofs of Theorems A, B, and C are primarily based on the (by now standard) techniques in \cite{Zhu1} for handling Toeplitz operators with positive Borel measure symbols.  However, the presence of an awkward inner product and reproducing kernel forces us to make tedious modifications to these techniques.

\medskip

This paper consists of five sections.  In the next section,  we will discuss the explicit formula of the
reproducing kernel for $F^2 _\alpha $ with respect to $\langle \cdot, \cdot
\rangle_\alpha$ in more detail and provide useful estimates for this reproducing kernel.  In Sections \ref{BCToepOp} and \ref{SectionSchatten} we will prove Theorems A, B, and C.  Finally, as mentioned before, the last section will contain a comparison of our results with the results in \cite{IVW,SV} and briefly discuss the problem of extending our results to Toeplitz operators on weighted Fock spaces that are defined in terms of the reproducing kernel of $F_\alpha ^2$ with respect to the inner product \eqref{CanInnProd}.

We will end this introduction with a comment on some notation.  For positive quantities $A$ and $B$ (which may depend on a variety of parameters or variables), we will use the notation $A \lesssim B$ if there exists an unimportant constant $C$ such that $A \leq C B$.  The notation $A \gtrsim B$ and $A \approx B$ will have a similar meaning.

\section{The reproducing kernel of $F_\alpha ^2$ with respect to $\langle \cdot, \cdot
\rangle_\alpha$} \label{RepKerSec}

As was mentioned in the introduction, the inner product $\langle \cdot, \cdot \rangle_\alpha$ generates a
Hilbert space norm on $F^2_\alpha$ that is equivalent to the $F^2_\alpha$ norm.  In particular, if we define $\|\cdot\|_{\widetilde{F_\alpha ^p}}$ on $F_\alpha ^p$ by \begin{equation*} \label{ModLpNorm1} \|f\|_{\widetilde{F_\alpha ^p}}  :=  \left(\incn \left|f^-_{\frac\alpha p}(z) e^{- \frac{1}{2} |z|^2}\right| ^p \,  dv(z)\right)^\frac{1}{p} + \left(\incn \left|f^+_{\frac\alpha p}(z)  e^{-\frac12 |z|^2}\right|^p \,
d\widetilde{v}_\alpha(z)\right)^\frac{1}{p} \end{equation*} when $\alpha > 0$ and \begin{equation*} \label{ModLpNorm2} \|f\|_{\widetilde{F_\alpha ^p}}  :=  \left(\incn \left|f(z) e^{- \frac12 |z|^2}\right| ^p \,  d\widetilde{v}_\alpha (z)\right)^\frac{1}{p} \end{equation*} when $\alpha \leq 0$, then we have that $\|\cdot\|_{\widetilde{F_\alpha ^p}}$  and $\|\cdot\|_{F_\alpha ^p}$ are equivalent norms (quasi-norms when $0 < p < 1$) for all $0 < p < \infty$.  To prove this we first need the following pointwise estimate from  \cite{CCK2}, p. 4 (which in fact will often be used throughout the rest of the paper)
\begin{lem}
\label{mvplem}
Let $p, a, t>0$ and $\alpha$ be real. Then there is a constant $C=C(a,t, \alpha)>0$
such that
\begin{equation*}
|f(z)|^p  e^{-a|z|^2} (1+|z|)^{-\alpha} \le C
\int_{|w-z|<t}|f(w)|^p  e^{-a |w|^2}\, dv_\alpha(w)
\end{equation*}
for every entire function $f$ and every $z\in \Cn$.
\end{lem}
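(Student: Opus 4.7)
The plan is to reduce the weighted pointwise estimate to the ordinary sub-mean value property for plurisubharmonic functions, via a standard ``twist by a plane wave'' trick. Fix $z \in \Cn$ and set $b := a/p$. I would consider the auxiliary entire function
$$g(w) := f(w) \, e^{b|z|^2 - 2b \bar z \cdot w},$$
which is designed so that $|g(z)|^p = |f(z)|^p e^{-a|z|^2}$. The whole point of this choice is the polarization identity $|z-w|^2 = |z|^2 + |w|^2 - 2\text{Re}(\bar z \cdot w)$ in $\Cn$, which converts $|g|^p$ into the weighted integrand on the right-hand side up to a tame error:
$$|g(w)|^p = |f(w)|^p \, e^{-a|w|^2} \, e^{a|w-z|^2}.$$

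Once $g$ is in hand, the argument is almost mechanical. Since $g$ is entire, $|g|^p$ is subharmonic on $\mathbb{R}^{2n}$, and so the ordinary mean value inequality on $B(z,t)$, combined with the bound $e^{a|w-z|^2} \leq e^{at^2}$, yields the unweighted estimate
$$|f(z)|^p e^{-a|z|^2} \leq C(a,t,n) \int_{|w-z|<t} |f(w)|^p e^{-a|w|^2} \, dv(w).$$

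To bring in the polynomial factor $(1+|z|)^{-\alpha}$, I would use that for $|w-z| < t$ the triangle inequality forces the two-sided comparison $(1+t)^{-1} \leq (1+|w|)/(1+|z|) \leq 1+t$. Consequently $(1+|z|)^{-\alpha} \leq (1+t)^{|\alpha|} (1+|w|)^{-\alpha}$, and multiplying the previous display by $(1+|z|)^{-\alpha}$ and pulling the factor inside the integral finishes the proof, since $(1+|w|)^{-\alpha} dv(w) = dv_\alpha(w)$.

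The only genuine difficulty is finding the right $g$: more naive choices such as $g(w) = f(w) e^{-b \bar z \cdot w}$ leave a residual factor of the form $e^{a|w||w-z|}$, which on $B(z,t)$ grows like $e^{at|w|}$ and therefore cannot be absorbed into the Gaussian weight. The specific $g$ above works precisely because the polarization identity is an \emph{equality}, which forces the residual to be the harmless $e^{a|w-z|^2}$.
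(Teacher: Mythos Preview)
Your argument is correct. The paper does not actually prove this lemma; it is quoted verbatim from \cite{CCK2}, p.~4, and used as a black box throughout. Your proof---twist by the entire plane wave $e^{-2b\bar z\cdot w}$, apply the sub-mean value inequality to $|g|^p$, and then absorb the polynomial weight via $(1+|w|)/(1+|z|)\in[(1+t)^{-1},1+t]$ on $B(z,t)$---is the standard route to such estimates and is almost certainly what \cite{CCK2} does as well. One small remark: for $0<p<1$ the subharmonicity of $|g|^p$ is not quite ``$|g|$ subharmonic implies $|g|^p$ subharmonic'' (that fails for general subharmonic functions), but rather follows because $\log|g|$ is plurisubharmonic and $t\mapsto e^{pt}$ is convex increasing; you might want to say that explicitly.
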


\begin{lem} \label{EqOfLpNorms} If $0 < p < \infty$ and $\alpha$ is any real number, then the norms $\|\cdot\|_{F_\alpha ^p}$ and $\|\cdot\|_{\widetilde{F_\alpha ^p}}$ are equivalent on $F_\alpha ^p$ \end{lem}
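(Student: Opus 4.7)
The core difficulty is localized to a neighborhood of the origin: on $\{|z|\ge 1\}$ the two weights $(1+|z|)^{-\alpha}$ and $|z|^{-\alpha}$ are pointwise comparable, so the two norms already agree there up to constants. I would treat $\alpha\le 0$ and $\alpha>0$ separately, each time reducing the equivalence to an estimate on the unit ball.

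The forward inequality $\|f\|_{\widetilde{F_\alpha^p}}\lesssim\|f\|_{F_\alpha^p}$ is essentially pointwise. When $\alpha\le 0$ one has $|z|^{-\alpha}\le (1+|z|)^{-\alpha}$ everywhere. When $\alpha>0$ one writes $|f|^p\lesssim|f^-_{\alpha/p}|^p+|f^+_{\alpha/p}|^p$ and applies $(1+|z|)^{-\alpha}\le 1$ to the polynomial piece and $(1+|z|)^{-\alpha}\le|z|^{-\alpha}$ to the tail piece, directly producing the two summands of $\|f\|_{\widetilde{F_\alpha^p}}^p$.

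The reverse inequality $\|f\|_{F_\alpha^p}\lesssim\|f\|_{\widetilde{F_\alpha^p}}$ is where the real work lies. For $\alpha\le 0$ it suffices to control $\int_{|z|\le 1}|f|^p e^{-p|z|^2/2}\,dv$, which I would dominate by $\sup_{|z|\le 1}|f(z)|^p$. By the maximum principle this supremum equals $\sup_{|z|=3}|f(z)|^p$, and for each such $z$ Lemma \ref{mvplem} (with $t=1$ and lemma-exponent $0$) gives $|f(z)|^p e^{-p|z|^2/2}\lesssim\int_{|w-z|<1}|f|^p e^{-p|w|^2/2}\,dv$; the ball of integration lies in the annulus $\{2\le|w|\le 4\}$, where $|w|^{-\alpha}=|w|^{|\alpha|}\gtrsim 1$, so the integral is bounded by $\|f\|_{\widetilde{F_\alpha^p}}^p$. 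For $\alpha>0$ both components of $\|f\|_{\widetilde{F_\alpha^p}}^p$ must be dominated by $\|f\|_{F_\alpha^p}^p$: the polynomial term reduces by Cauchy estimates on Taylor coefficients to $\sup_{|w|\le 2}|f(w)|^p$, itself handled exactly as above via Lemma \ref{mvplem} on a ball where $(1+|w|)^{-\alpha}$ is bounded below; the tail integral over $\{|z|\ge 1\}$ is easy since $|z|^{-\alpha}\lesssim(1+|z|)^{-\alpha}$ and $|f^+|^p\lesssim|f|^p+|f^-|^p$ reduces it to $\|f\|_{F_\alpha^p}^p$ plus the already treated polynomial term.

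The single delicate step — and the main obstacle — is the tail integral $\int_{|z|\le 1}|f^+_{\alpha/p}(z)|^p|z|^{-\alpha}e^{-p|z|^2/2}\,dv(z)$, where the weight is genuinely singular at the origin. The crucial fact is that $f^+_{\alpha/p}$ is entire and vanishes at $0$ to order at least $k+1$, where $k=\lfloor\alpha/p\rfloor$. Restricting to the complex line through $0$ and $z$ and applying the one-variable Schwarz lemma on the disc of radius $2$ yields the pointwise bound $|f^+_{\alpha/p}(z)|\lesssim|z|^{k+1}\sup_{|w|\le 2}|f^+_{\alpha/p}(w)|$ for $|z|\le 1$. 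Since $p(k+1)>\alpha$, the residual weight $|z|^{p(k+1)-\alpha}$ is locally integrable, and the supremum is in turn controlled by $\|f\|_{F_\alpha^p}^p$ via $|f^+|^p\lesssim|f|^p+|f^-|^p$ and Lemma \ref{mvplem}. With this Schwarz-type bound in place, the remainder of the argument is routine bookkeeping around the mean-value inequality.
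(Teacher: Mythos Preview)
Your argument is essentially correct, though there is a labeling slip: for $\alpha>0$, the pointwise argument in your second paragraph actually proves $\|f\|_{F_\alpha^p}\lesssim\|f\|_{\widetilde{F_\alpha^p}}$ (you start from $|f|^p(1+|z|)^{-\alpha}$ and bound it by the two summands defining $\|f\|_{\widetilde{F_\alpha^p}}^p$), while your third and fourth paragraphs prove $\|f\|_{\widetilde{F_\alpha^p}}\lesssim\|f\|_{F_\alpha^p}$. The content of each argument is right; only the headers are reversed in the $\alpha>0$ case. Also, your Schwarz-lemma estimate for $\int_{|z|\le1}|f^+_{\alpha/p}|^p|z|^{-\alpha}$ is valid and is the heart of the matter.

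The paper takes a different, softer route: instead of producing two-sided explicit inequalities, it observes that both $(F_\alpha^p,\|\cdot\|_{F_\alpha^p})$ and $(F_\alpha^p,\|\cdot\|_{\widetilde{F_\alpha^p}})$ are (quasi-)Banach spaces and appeals to the closed graph theorem, so that it suffices to show the two spaces coincide \emph{as sets}. This reduces the work to checking finiteness of $\int_{|z|\le1}|f^+_{\alpha/p}|^p|z|^{-\alpha}\,dv$ for a fixed $f\in F_\alpha^p$, which follows immediately from the Cauchy estimates (equivalently, your Schwarz-lemma bound) without having to track the dependence on $\|f\|_{F_\alpha^p}$. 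What the paper's approach buys is brevity: no need to push $\sup_{|w|\le2}|f^+_{\alpha/p}(w)|$ back to $\|f\|_{F_\alpha^p}$ via Lemma~\ref{mvplem}. What your approach buys is a constructive proof with explicit constants and no reliance on the closed graph theorem in the quasi-Banach range $0<p<1$, which is itself a nontrivial fact. Both arguments ultimately hinge on the same local observation---that $f^+_{\alpha/p}$ vanishes at the origin to order exceeding $\alpha/p$---to tame the singular weight.
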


\begin{proof} It is elementary to check that $(F_\alpha ^p, \|\cdot\|_{\widetilde{F_\alpha ^p}})$ is a Banach space when $p \geq 1$ and is a quasi-Banach space when $0 < p < 1$.  Thus, by the closed graph theorem (see \cite{K}), it is enough to show that \begin{equation*} (F_\alpha ^p, \|\cdot\|_{\widetilde{F_\alpha ^p}}) = (F_\alpha ^p, \|\cdot\|_{F_\alpha ^p}) \end{equation*} as sets.  Since $f = f^-_{\frac\alpha p} + f^+_{\frac\alpha p}$, we trivially have that $(F_\alpha ^p, \|\cdot\|_{\widetilde{F_\alpha ^p}}) \subseteq  (F_\alpha ^p, \|\cdot\|_{F_\alpha ^p}) $ when $\alpha > 0$ so it remains to show that $(F_\alpha ^p, \|\cdot\|_{\widetilde{F_\alpha ^p}}) \supseteq  (F_\alpha ^p, \|\cdot\|_{F_\alpha ^p})$.  To that end, let $f \in   (F_\alpha ^p, \|\cdot\|_{F_\alpha ^p})$.  Since $f^-_{\frac\alpha p}$ is a polynomial, we only need to show that \begin{align*} \incn  & \left|f^+_{\frac\alpha p}(z)  e^{-\frac12 |z|^2}\right|^p  \,
d\widetilde{v}_\alpha(z) \\ & = \int_{|z| \leq 1} \left|f^+_{\frac\alpha p}(z)  e^{-\frac12 |z|^2}\right|^p  \,
d\widetilde{v}_\alpha(z) + \int_{|z| > 1}  \left|f^+_{\frac\alpha p}(z)  e^{-\frac12 |z|^2}\right|^p  \,
d\widetilde{v}_\alpha(z) < \infty \end{align*} 

Since $f$ is entire, the Cauchy estimates (see p. 101 in \cite{Kr}) immediately tell us that the first term above is finite.  Similarly the Cauchy estimates in conjunction with Lemma \ref{mvplem} give us that $\| f^-_{\frac\alpha p}\|_{F_\alpha ^p} \lesssim \| f\|_{F_\alpha ^p}$.  Thus, since  $|z| ^{-\alpha} \approx (1 + |z|)^{-\alpha}$ if $|z| \geq 1$, writing $f^+_{\frac\alpha p} = f - f^-_{\frac\alpha p}$ gives us that the second term is also finite.

The proof when when $\alpha \leq 0$ is similar to the proof when $\alpha > 0$ and will therefore be omitted.

\end{proof}

Note that (as one would obviously expect) $\langle \cdot, \cdot \rangle_\alpha$ is in fact a bounded sesquilinear form on $F_\alpha ^p \times F_\alpha ^q$ if $1 < p < \infty$ where $q$ is the conjugate exponent of $p$ (and is also bounded on $F_\alpha ^1 \times F^\infty$).  In particular, if $f \in F_\alpha ^p$ and $g \in F_\alpha ^q$ then it is clear that \begin{equation} \label{ProdEqn}
 (f) _\frac{\alpha}{2} ^+ (g)_{\frac{\alpha}{2}} ^+ =
\left\{
\begin{aligned}
& (fg)_{\alpha + 1} ^+
&{\rm if}\quad \alpha \not \in \mathbb{N},\\
&(fg)_{\alpha } ^+
&{\rm if}\quad \alpha  \in \mathbb{N}
\end{aligned}
\right.
\end{equation}
and that a similar equation holds for $(f) _\frac{\alpha}{2} ^- (g)_{\frac{\alpha}{2}} ^- .$ Thus, by the standard Cauchy estimates and Lemma \ref{EqOfLpNorms} for $p = 1$, we have that \begin{equation*} |\langle f, g \rangle_\alpha| \lesssim \int_{\Cn} |f(z)g(z)| e^{-\frac{1}{2}|z|^2} \,dv_\alpha(z) \leq \|f\|_{L_\alpha ^p} \|g\|_{L_\alpha ^q}. \end{equation*}
In virtue of Lemma \ref{EqOfLpNorms}, we will use the notation $\| \cdot\|_{F_\alpha ^2}$ to refer to either the $L_\alpha ^2$ norm on
$F_\alpha ^2$ from the introduction or the norm induced  by $\langle \cdot, \cdot
\rangle_\alpha$.

Note that Lemma \ref{mvplem} tells us that $F_\alpha ^2$ is in fact a reproducing kernel Hilbert space.  As is well known, we have
\begin{align}
\label{onbker}
K^\alpha(z,w) = \sum_{\beta}\phi_\beta(z)\overline{\phi_\beta(w)}
\end{align}
where $\{\phi_\beta\}$ is any orthonormal basis for  $F^2_\alpha$
with respect to $\langle\cdot,\cdot\rangle_\alpha$.
Note that polynomials form a dense subset of  $F^2_\alpha$ (see Proposition $2.3$ in \cite{CCK2}). Also, note that
monomials are mutually orthogonal, which means that $\{\frac{z^\beta}{\sqrt{\langle z^\beta, z^\beta\rangle_\alpha}}\}_\beta$ is
an orthonormal basis for  $F^2_\alpha$.  Equation  $(\ref{onbker})$ and arguments that are identical to the ones in the proof of Theorem $4.5$ in \cite{CCK2} then give us that \begin{equation}
\label{RepKerFormual} K^\alpha(z,w)=
\begin{cases}
\mathcal I^{-\alpha/2}K_w(z) &{\rm if}\quad \alpha\leq 0,\\
\mathcal I^{-\alpha/2}K_w(z)+   (K_w)_{\frac{\alpha}{2}} ^- (z)  &{\rm if}\quad \alpha > 0.\\
\end{cases}
\end{equation}

\noindent Here,  $\mathcal I^s$ is the fractional integration operator defined as:
\begin{equation}
\label{fracint}
\mathcal I^s f(z)=
\left\{
\begin{aligned}
&\sum_{k=0}^\infty \frac{\Gamma(n+k)}{\Gamma(n+s+k)} f_k(z)
&{\rm if}\quad s\ge 0,\\
&\sum_{k> |s|}^\infty \frac{\Gamma(n+k)}{\Gamma(n+s+k)} f_k(z)
&{\rm if}\quad s<0.\\
\end{aligned}
\right..
\end{equation}

\noindent Moreover,  for $s > 0,$  $f^+_s$ is the tail part of the Taylor expansion of $f$
of degree higher than $|s|$ given by
\label{taylor} \begin{equation}
f^+_s(z) = \sum_{k> |s|} f_k(z) \end{equation}
and we let $f^-_s = f-f^+_s$ (see\cite{CCK2} for more information on fractional differentiation and integration).

Now if $\alpha \leq 0$ then $(L_\alpha ^2, \|\cdot\|_{\widetilde{F_\alpha ^2}})$ is a closed subspace of $F_\alpha ^2$ with respect to $\langle \cdot, \cdot \rangle_\alpha$.  In this case, let $P_\alpha$ denote the orthogonal projection, so that \begin{equation*} P_\alpha f (z) = \langle f, K_z ^\alpha \rangle_\alpha \end{equation*} for any $f \in L_\alpha ^2$.  Note that the inner product $\langle\cdot,\cdot\rangle_\alpha$ does not make sense on $L_\alpha ^2$ when $\alpha > 0$. Also note that $T_\mu ^\alpha$ would not even make sense for very natural $\mu$ (for example $\mu$ being point-mass measure at the origin) if we gave $F_\alpha ^2$ the inner product $\langle \cdot, \cdot \rangle_\alpha$ when $\alpha > 0$ and defined $T_\mu ^\alpha$ (in the usual way) in terms of this inner product. Furthermore, if $d\mu = f \, dv$ for a measurable function $f$ on $\Cn$, then note that $T_\mu ^\alpha = P_\alpha M_f$ when $\alpha \leq 0$ where $M_f$ is ``multiplication by $f$.''   On the other hand, even though $\langle \cdot, \cdot \rangle_\alpha $ does not make sense on $L_\alpha ^2$, we will show in Section \ref{BCToepOp} that the definition of $T_\mu ^\alpha$ given by $(\ref{ToepOpDefNeg})$ and $(\ref{ToepOpDefPos})$ defines the ``correct'' sesquilinear form on $F_\alpha ^2$ with respect to $\langle \cdot, \cdot \rangle_\alpha$ when $|\mu|$ is Fock-Carleson  (see Section \ref{BCToepOp} for precise definitions, and note that this sesquilinear form itself is sometimes taken as the definition of a Toeplitz operator, see \cite{BC}, p. 583 for example.)

We will finish this section by proving some useful estimates for the reproducing kernel $K^\alpha(z, w)$.

\begin{lem}
\label{weightedmodified}
If $\alpha$ is any real number, then there is a positive constant
$C=C(\alpha)>0$ such that
\begin{equation*}
|K^\alpha(z,w)| \le C \left\{
\begin{aligned}
& (1+|z| |w|)^{\frac\alpha 2}   e^{\frac12 |z|^2 + \frac12 |w|^2 - \frac18 |z - w|^2}  &{\rm if}\quad\alpha<0, \\
& (1+|z\cdot\overline w|)^{\frac\alpha 2} e^{\frac12 |z|^2 + \frac12 |w|^2 - \frac18 |z - w|^2}
&{\rm if}\quad\alpha>0.
\end{aligned}
\right.
\end{equation*}
\end{lem}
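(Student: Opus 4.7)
My plan is to derive both bounds from the explicit reproducing-kernel formula $(\ref{RepKerFormual})$ combined with a suitable integral or series representation of $\mathcal{I}^{-\alpha/2}K_w$.

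For the case $\alpha\leq 0$, I start from $K^\alpha(z,w)=\mathcal{I}^{-\alpha/2}K_w(z)$ with $K_w(z)=e^{z\cdot\bar w}$. Setting $s=-\alpha/2\geq 0$ and substituting the Beta-integral identity $\Gamma(n+k)/\Gamma(n+s+k)=\Gamma(s)^{-1}\int_0^1 u^{n+k-1}(1-u)^{s-1}\,du$ termwise into the Taylor series yields, for $s>0$,
\[K^\alpha(z,w)=\frac{1}{\Gamma(s)}\int_0^1 u^{n-1}(1-u)^{s-1}e^{u z\cdot\bar w}\,du.\]
Taking absolute values inside leaves $e^{uR}$ with $R=\re(z\cdot\bar w)=(|z|^2+|w|^2-|z-w|^2)/2$. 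Dividing by the target Gaussian $e^{|z|^2/2+|w|^2/2-|z-w|^2/8}$ and substituting $v=1-u$, the claim reduces to $e^{-3|z-w|^2/8}\int_0^1(1-v)^{n-1}v^{s-1}e^{-vR}\,dv\lesssim(1+|z||w|)^{-s}$. I would handle this by splitting on the sign of $R$: when $R\geq 0$, combining $\int_0^1(1-v)^{n-1}v^{s-1}e^{-vR}dv\leq\Gamma(s)R^{-s}$ (enlarging to $(0,\infty)$) with the obvious bound by a constant gives $(1+R)^{-s}$, and the remainder $(1+R)^{-s}e^{-3|z-w|^2/8}\lesssim(1+|z||w|)^{-s}$ follows from $|z-w|^2\geq 2(|z||w|-R)$ together with the fact that $t\mapsto(1+t)^{s}e^{-3t/4}$ is bounded on $[0,\infty)$; when $R<0$, the original pre-substitution integral is uniformly bounded and the claim collapses to $(1+|z||w|)^s\lesssim e^{|z|^2/2+|w|^2/2-|z-w|^2/8}$, which follows from $|z|^2/2+|w|^2/2-|z-w|^2/8\geq|z||w|/2$ (via $|R|\leq|z||w|$ and AM--GM) and the elementary estimate $(1+x)^s\lesssim e^{x/2}$.

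For $\alpha>0$, the truncation term $(K_w)^{-}_{\alpha/2}(z)$ is a polynomial in $z\cdot\bar w$ of degree $\lfloor\alpha/2\rfloor$, hence trivially controlled by $C(1+|z\cdot\bar w|)^{\alpha/2}$, and since $|z|^2/2+|w|^2/2-|z-w|^2/8\geq 0$ the target absorbs this contribution. For the main piece $\mathcal{I}^{-\alpha/2}K_w(z)=\sum_{k>\alpha/2}\frac{\Gamma(n+k)}{\Gamma(n-\alpha/2+k)k!}(z\cdot\bar w)^k$, I would recognize the completion of this series (extending to all $k\geq 0$) as a multiple of Kummer's confluent hypergeometric function, namely $\frac{\Gamma(n)}{\Gamma(n-\alpha/2)}\,{}_1F_1(n;\,n-\alpha/2;\,z\cdot\bar w)$ (interpreting this via limits in the integer-parameter cases), and apply Kummer's transformation ${}_1F_1(n;\,n-\alpha/2;\,T)=e^T\,{}_1F_1(-\alpha/2;\,n-\alpha/2;\,-T)$. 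The standard uniform asymptotic estimate $|{}_1F_1(-\alpha/2;\,n-\alpha/2;\,-T)|\lesssim e^{-\re T}(1+|T|)^{-n}+(1+|T|)^{\alpha/2}$, valid in all complex sectors, combined with $|e^{z\cdot\bar w}|=e^{|z|^2/2+|w|^2/2-|z-w|^2/2}$ and the slack $e^{-|z-w|^2/2}\leq e^{-|z-w|^2/8}$, then produces the desired bound $(1+|z\cdot\bar w|)^{\alpha/2}e^{|z|^2/2+|w|^2/2-|z-w|^2/8}$; a finite-sum correction from the missing terms $k\leq\alpha/2$ is again dominated by the polynomial factor.

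The hardest part is the $\alpha>0$ case: a direct termwise bound on the series only yields $(1+|z\cdot\bar w|)^{\alpha/2}e^{|z\cdot\bar w|}$, which can greatly exceed the target (for example when $z=-w$ with $|z|$ large) because the Gaussian decay $e^{-|z-w|^2/8}$ is lost. Recovering that decay via the Kummer-transformed representation requires careful handling of the Stokes-type asymptotics of ${}_1F_1$ in different sectors of the complex plane and attention to the integer-parameter degenerate cases, together with bookkeeping of the polynomial truncation that distinguishes $\mathcal{I}^{-\alpha/2}$ from the full series.
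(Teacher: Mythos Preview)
Your proof is correct in outline but takes a much more self-contained route than the paper. The paper's own argument is essentially a one-line citation: it quotes from \cite{CCK2} the bound
\[
|\mathcal{I}^{s}K_{w}(z)|\lesssim
\begin{cases}
(1+|z||w|)^{-s}\bigl(|e^{z\cdot\bar w}|+e^{\frac12|z||w|}\bigr),& s\ge 0,\\[2pt]
(1+|z\cdot\bar w|)^{-s}\bigl(|e^{z\cdot\bar w}|+e^{\frac12|z||w|}\bigr),& s<0,
\end{cases}
\]
and then invokes the elementary inequality $|e^{z\cdot\bar w}|+e^{\frac12|z||w|}\le 2\,e^{\frac12|z|^2+\frac12|w|^2-\frac18|z-w|^2}$ to finish. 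You are therefore re-deriving the \cite{CCK2} estimate and the Gaussian comparison simultaneously.

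For $\alpha<0$ your integral-representation argument is essentially the same device (this is formula \eqref{FracIntPosForm} in the paper), and your two-case estimation on the sign of $R=\re(z\cdot\bar w)$ is clean and correct. For $\alpha>0$ your route via Kummer's transformation of ${}_1F_1(n;\,n-\alpha/2;\,z\cdot\bar w)$ is genuinely different from the derivative-integral representation \eqref{FracIntNegForm} that \cite{CCK2} presumably exploits. What you gain is a single closed-form object whose large-argument behaviour encodes the Gaussian decay directly; what it costs is exactly what you flag at the end, namely a uniform two-term bound for $|{}_1F_1(-\alpha/2;\,n-\alpha/2;\,-T)|$ valid across all sectors of $T\in\mathbb{C}$, plus bookkeeping when $n-\alpha/2$ is a nonpositive integer. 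Those asymptotics are in the literature (e.g.\ DLMF~13.7), so the plan is sound, but making that step rigorous is substantially more work than the paper's citation-plus-inequality argument.
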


\begin{proof}

It was proved in \cite{CCK2},  p. 15 that \begin{equation*}
|\mathcal I^s K_w(z) | \le C \left\{
\begin{aligned}
& (1+|z| |w|)^{-s} \left(|e^{z \cdot \overline{w}}| + e^{\frac12 |z||w|}\right)  &{\rm if}\quad s\geq0, \\
& (1+|z\cdot\overline w|)^{-s}  \left(|e^{z \cdot \overline{w}}| + e^{\frac12 |z||w|}\right)  &{\rm if}\quad s<0.
\end{aligned}
\right.
\end{equation*}

\noindent The proof now follows from the elementary fact that \begin{equation*} |e^{z \cdot \overline{w}}| + e^{\frac12 |z||w|} \leq 2 e^{\frac12 |z|^2 + \frac12 |w|^2 - \frac18 |z - w|^2} \end{equation*} \end{proof}

We now estimate $K^\alpha (z, z)$ for any $z \in \Cn$ as follows:
\begin{pro}\label{weighteddiagonal}
For a real number $\alpha$,
\begin{align*}
|K^\alpha(z,z)|\approx
(1+|z|)^\alpha e^{|z|^2},\quad z\in\Cn.
\end{align*}
\end{pro}

\begin{proof}
By (\ref{RepKerFormual}) , it is enough to prove that \begin{align*}
|\mathcal I^s K(z,z)|\approx(1+|z|)^{-2s}
e^{|z|^2}, \quad z\in\Cn,
\end{align*}
and
\begin{align*}
|\mathcal I^{-s} K(z,z)|\gtrsim |z|^{2s}e^{|z|^2},
\quad |z|\geq\sigma
\end{align*}
for $s > 0$ where the constant of equivalence depends on $\sigma$.

To that end, let $s>0$ and let $s=m+r$ where $m$ is a nonnegative integer and $0\le r<1$.
Then for an entire function $f$, it was proved in \cite{CCK2} p. 14 that
\begin{align}
&\mathcal I^{s}f(z) = \frac{1}{\Gamma(s)}\int_0^1 t^{n-1}(1-t)^{s-1} f (tz)\, dt \label{FracIntPosForm}\\
&\mathcal I^{-s}  f(z) = \frac{1}{\Gamma(1-r)}
    \int_0^1 (1-t)^{-r} t^s \partial_t^{m+1}[t^{n-r} f^+_s (tz)]\, dt \label{FracIntNegForm} \end{align} for any entire $f$.

Let  $\sigma > 0$.  If $|z| \leq \sigma$, then $(\ref{FracIntPosForm})$ tells us that
\begin{align*}
|\mathcal I^s K(z,z)|
&=\frac 1{\Gamma(s)}\int_0^1 t^{n-1}(1-t)^{s-1}e^{t|z|^2}dt\\
&=\frac 1{\Gamma(s)}\int_0^1 (1-t)^{n-1}t^{s-1}e^{(1-t)|z|^2}dt\\
&=\frac {e^{|z|^2}}{\Gamma(s)}\int_0^1 (1-t)^{n-1}t^{s-1}e^{-t|z|^2}dt\\
&\approx (1 + |z|)^{-2s} e^{|z|^2}
\end{align*} since $|z| \leq \sigma$.  However, if $|z| \geq \sigma$, then
\begin{align*}
|\mathcal I^s K(z,z)|
&=\frac {e^{|z|^2}}{\Gamma(s)}\frac 1{|z|^{2s}}
\int_0^{|z|^2}\left(1-\frac\tau{|z|^2}\right)^{n-1}
e^{-\tau}\tau^{s-1}d\tau\\
&\approx (1 + |z|)^{-2s} e^{|z|^2}.
\end{align*}

To finish the proof we will estimate $\mathcal I^{-s} K(z,z)$ with $0<s=m+r$
where $m$ is a non-negative integer and $0\le r<1$. Let $e_k(t) = \sum_{j = k + 1} ^\infty \frac{t^j}{j!}.$ Then by $(\ref{FracIntNegForm})$ we have
\begin{align*}
\left|\mathcal I^{-s} K(z,z) \right|
&=\frac 1{\Gamma(1-s)}\int_0^1 (1-t)^{-r} t^s
\partial_t^{m+1}[t^{n-r} (K_z)^+_s (tz)]\, dt\\
&=\frac 1{\Gamma(1-s)}\int_0^1 (1-t)^{-r} t^s
\partial_t^{m+1}[t^{n-r} e_{m + 1} (t|z|^2)]\, dt.
\end{align*}
Note that $\partial_t^k e_{m + 1} (t) = e_{m+ 1 -k}(t)$ when $k< m + 1$ and $\partial_t^k e_{m + 1} (t)
=e^t$ when $k\ge m + 1 $. Thus, we have
\begin{align*}
\partial_t^{m+1}[t^{n-r} e_{m + 1} (t|z|^2)]
\geq |z|^{2(m+1)}t^{n-r}e^{t|z|^2}.
\end{align*}
Therefore
\begin{align*}
\left|\mathcal I^{-s} K(z,z) \right|
&\gtrsim |z|^{2(m+1)}\int_0^1(1-t)^{-r}t^{s+n-r}e^{t|z|^2}dt\\
&=|z|^{2(m+1)}e^{|z|^2}\int_0^1 t^{-r}(1-t)^{s+n-r}e^{-t|z|^2}dt\\
&=|z|^{2(m+1)}e^{|z|^2}\int_0^{|z|^2}|z|^{2r-2}
\tau^{-r}\left(1-\frac{\tau}{|z|^2}\right)^{s+n-r}e^{-\tau}d\tau\\
&=|z|^{2s}e^{|z|^2}\int_0^{|z|^2}
\tau^{-r}\left(1-\frac{\tau}{|z|^2}\right)^{s+n-r}e^{-\tau}d\tau\\
&\approx |z|^{2s}e^{|z|^2},\quad |z|\geq\sigma.
\end{align*}

\end{proof}

    Finally in this section we will obtain a lower estimate of $K^\alpha(z, w)$ near the diagonal.

    \begin{pro}\label{t:lowerbound}
If $\alpha$ is a real number then there is an $r>0$ such that
\begin{align*}
|K^\alpha(z,w)|\gtrsim (1+|z|)^\alpha
e^{\frac 12|z|^2+\frac 12|w|^2}
\quad\text{for all}\quad w\in B(z,r).
\end{align*}
\end{pro}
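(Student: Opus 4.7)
The plan is to exploit the lower bound $|K^\alpha(z,z)| \approx (1+|z|)^\alpha e^{|z|^2}$ from Proposition \ref{weighteddiagonal} together with the upper bound from Lemma \ref{weightedmodified} and a Cauchy estimate applied to a suitably normalized holomorphic function. Fix $z \in \Cn$ and set
\[
F_z(w) := K^\alpha(w,z)\,e^{-\overline{z}\cdot w}.
\]
Since $K^\alpha(\cdot,z) = \overline{K^\alpha(z,\cdot)}$ is entire in $w$ (a standard property of the reproducing kernel) and $e^{-\overline{z}\cdot w}$ is entire in $w$, the function $F_z$ is entire. The elementary identity $\operatorname{Re}(\overline{z}\cdot w) = \tfrac{1}{2}(|z|^2 + |w|^2 - |z-w|^2)$ yields
\[
|F_z(w)| = |K^\alpha(z,w)|\, e^{\frac{1}{2}|z-w|^2 - \frac{1}{2}(|z|^2 + |w|^2)},
\]
so any pointwise lower bound of order $(1+|z|)^\alpha$ on $|F_z|$ translates directly into the desired estimate on $|K^\alpha(z,w)|$ once $|z-w|$ is controlled by a constant.

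By Proposition \ref{weighteddiagonal}, $|F_z(z)| = |K^\alpha(z,z)|\,e^{-|z|^2} \gtrsim (1+|z|)^\alpha$ uniformly in $z$. The next step is to estimate $|F_z(u)|$ from above for $u\in B(z,1)$ via Lemma \ref{weightedmodified}: the exponential parts combine into $e^{\frac{1}{2}|z-u|^2 - \frac{1}{8}|z-u|^2} = e^{\frac{3}{8}|z-u|^2}$, which is bounded on $B(z,1)$, and the polynomial prefactor $(1+|z||u|)^{\alpha/2}$ (for $\alpha\leq 0$) or $(1+|z\cdot\overline{u}|)^{\alpha/2}$ (for $\alpha>0$) is itself $\lesssim (1+|z|)^\alpha$ uniformly in $u\in B(z,1)$, after a short case analysis on the sign of $\alpha$ and the size of $|z|$. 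The conclusion is
\[
\sup_{u\in B(z,1)}|F_z(u)| \lesssim (1+|z|)^\alpha,
\]
with implicit constants independent of $z$.

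Cauchy's estimate applied on $B(z,1)$ then bounds every first-order partial derivative of $F_z$ on $B(z,1/2)$ by a constant times $(1+|z|)^\alpha$, so $|F_z(w) - F_z(z)| \lesssim |w-z|\,(1+|z|)^\alpha$ for $|w-z|\leq 1/2$. Combined with the lower bound at $z$, the triangle inequality gives
\[
|F_z(w)| \;\geq\; \bigl(c_1 - c_2|w-z|\bigr)(1+|z|)^\alpha,
\]
and choosing $r := \min\{c_1/(2c_2),\,1/2\}$ (an absolute constant) produces $|F_z(w)| \gtrsim (1+|z|)^\alpha$ for $w\in B(z,r)$. Reading off the definition of $F_z$ and using $e^{-|z-w|^2/2}\geq e^{-r^2/2}$ then completes the proof.

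The main technical obstacle is the uniform estimate $\sup_{u\in B(z,1)}|F_z(u)| \lesssim (1+|z|)^\alpha$: the polynomial factors in Lemma \ref{weightedmodified} are written in terms of $|z||u|$ or $|z\cdot\overline{u}|$ rather than $|z|^2$, so one must separate the regime $|z|$ bounded (where all such quantities are $O(1)$) from the regime $|z|$ large (where $|z||u|\approx |z|^2$ by the triangle inequality applied to $|u-z|\leq 1$). Once this is in place, everything else is a routine Cauchy-estimate packaging familiar from reproducing-kernel arguments on Fock-type spaces.
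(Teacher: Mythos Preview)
Your proof is correct and takes a genuinely different route from the paper's. The paper invokes Lindholm's gradient estimate
\[
\left|\nabla\bigl(|f(w)|e^{-\frac12|w|^2}\bigr)\right|
\lesssim \left(\int_{B(w,r)}\bigl|f(\zeta)e^{-\frac12|\zeta|^2}\bigr|^2 dv(\zeta)\right)^{1/2},
\]
applied with $f=K^\alpha(z,\cdot)$, to control the oscillation of $|K^\alpha(z,w)|e^{-\frac12|w|^2}$ on $B(z,r)$; the right-hand integral is then estimated via Lemma~\ref{weightedmodified}, and the diagonal lower bound from Proposition~\ref{weighteddiagonal} finishes the argument. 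Your approach instead absorbs the Gaussian weight holomorphically by writing $F_z(w)=K^\alpha(w,z)e^{-\overline{z}\cdot w}$, which lets you replace the Lindholm estimate with the classical Cauchy estimate for entire functions. Both proofs use the same two ingredients (the diagonal asymptotic and the off-diagonal upper bound), but your packaging is more self-contained: it avoids the external reference and the $L^2$ integral altogether. The paper's route, on the other hand, would transfer verbatim to weights $e^{-\phi}$ for which no convenient holomorphic factorization like $e^{-\overline{z}\cdot w}$ exists, so it is the more robust template for generalized Fock spaces.
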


\begin{proof}
It was proved in \cite{Li}, p. $420$ that there exists a constant $C = C(r)$ where \label{l:submean}

\begin{equation} \label{l:submean}
\left|\nabla\left(|f(z)|e^{-\frac 12|z|^2}\right)\right|
\leq C\left(\int_{B(z,r)}\left|f(w)e^{-\frac 12|w|^2}\right|^p
dv(w)\right)^{1/p}
\end{equation}
for any $0 < p < \infty$ and $f$ entire, provided $f(z)\neq0$.

By  (\ref{l:submean}), we have
\begin{align*}
&\left||K^\alpha(z,w)|e^{-\frac 12|w|^2}
-|K^\alpha(z,z)|e^{-\frac 12|z|^2}\right|\\
&\lesssim |z-w|\left(\int_{B(z,2r)}\left|K^\alpha(z,\zeta)
e^{-\frac 12|\zeta|^2}\right|^2dv(\zeta)\right)^{1/2}.
\end{align*}

If $|z|\leq 1$, then $|K^\alpha(z,\zeta)|\lesssim 1$ for any $\zeta\in B(z,r)$. Thus
\begin{align*}
&\left||K^\alpha(z,w)|e^{-\frac 12|w|^2}
-|K^\alpha(z,z)|e^{-\frac 12|z|^2}\right|\\
&\lesssim r\left(\int_{B(z,2r)}\left|K^\alpha(z,\zeta)
e^{-\frac 12|\zeta|^2}\right|^2dv(\zeta)\right)^{1/2}\\
&\lesssim r, \quad w\in B(z,r)\quad \text{for small}\quad r>0.
\end{align*}
Hence we have
\begin{align*}
|K^\alpha(z,w)|e^{-\frac 12|w|^2}
&\gtrsim|K^\alpha(z,z)|e^{-\frac 12|z|^2}-r\\
&\gtrsim 1, \quad \text{for small}\quad r>0.
\end{align*}

Now let $|z|\geq 1$.
By  Lemma \ref{weightedmodified},
$|K^\alpha(z,\zeta)|\lesssim (1+|z||\zeta|)^{\alpha/2} E(z, w)$ where \begin{equation*} E(z, w) =
e^{\frac12 |z|^2 + \frac12 |w|^2 - \frac18 |z - w|^2}. \end{equation*}
Also by (\ref{l:submean}), we have
\begin{align*}
&\left||K^\alpha(z,w)|e^{-\frac 12|w|^2}
-|K^\alpha(z,z)|e^{-\frac 12|z|^2}\right|\\
&\lesssim r\left(\int_{B(z,2r)}
\left|K^\alpha(z,\zeta)e^{-\frac 12|\zeta|^2}\right|^2
dv(\zeta)\right)^{1/2}\\
&\lesssim r\left(\int_{B(z,2r)}(1+|z||\zeta|)^{\alpha}
|E(z,\zeta)|^2
e^{-|\zeta|^2}dv(\zeta)\right)^{1/2}.
\end{align*}
Note that if $r\leq 1/2$, then
$|\zeta|\approx |z|$ for $\zeta\in B(z,r)$.
Hence,  we have
\begin{align*}
&\left||K^\alpha(z,w)|e^{-\frac 12|w|^2}
-|K^\alpha(z,z)|e^{-\frac 12|z|^2}\right|\\
&\lesssim r(1+|z|)^\alpha
\left(\incn |E(z,\zeta)|^2
e^{-|\zeta|^2}dv(\zeta)\right)^{1/2}\\
&\lesssim r(1+|z|)^\alpha e^{\frac 12|z|^2}.
\end{align*}

\noindent Finally, by Proposition \ref{weighteddiagonal}, we have
\begin{align*}
|K^\alpha(z,w)|e^{-\frac 12|w|^2}
&\gtrsim|K^\alpha(z,z)|e^{-\frac 12|z|^2}
-r(1+|z|)^\alpha e^{\frac 12|z|^2}\\
&\gtrsim(1-r)(1+|z|)^\alpha e^{\frac 12|z|^2},
\quad w\in B(z,r)\quad\text{for small}\quad r>0.
\end{align*}
\end{proof}

\section{Boundedness and compactness for Toeplitz operators} \label{BCToepOp}

In this section we prove Theorems A and B. As usual, the proofs rely heavily on a natural characterization of Carleson measures.  First we prove some preliminary results. Let $\mu$ be a complex Borel measure in the sense that $\mu$ can be written as $\mu = (\mu_1 - \mu_2) + i (\mu_3 - \mu_4)$ where each $\mu_j$ for $j = 1, \ldots, 4$ is a $\sigma-$finite positive Borel measure on $\Cn$  If $\mu$ further satisfies $(\ref{Condition M})$ then define the Berezin transform $\tilde{\mu}$ of a Borel measure $\mu$ by \begin{equation*}  \tilde{\mu} (z) = \incn |k_z ^\alpha (w) | ^2 \frac{e^{-|w|^2}}{
|w|^\alpha} \, d\mu(w) \end{equation*} if $\alpha \leq 0$ and \begin{equation*} \tilde{\mu} (z) = \incn |(k_z ^\alpha ) ^- _{\frac\alpha 2} (w) | ^2 e^{- |w|^2} \,  d\mu(w) + \incn |(k_z ^\alpha ) ^+  _{\frac\alpha 2} (w) |^2  \,
\frac{e^{-|w|^2}}{|w|^\alpha} \, d\mu(w) \end{equation*} if $\alpha > 0$.  Note that we will prove in this section that (as one would expect)\begin{equation*} \langle T_\mu ^\alpha k_z ^\alpha, k_z ^\alpha \rangle_\alpha = \tilde{\mu} (z) \end{equation*} whenever $T_{|\mu|} ^\alpha$ is bounded.

\begin{lem}\label{l:berezin}
$\mu(B(z,r))\lesssim \tilde\mu(z)$ for small enough $r > 0$ and $|z| \geq 2r$ .
\end{lem}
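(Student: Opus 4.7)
The plan is to restrict the integral defining $\tilde\mu(z)$ to the ball $B(z,r)$ and exploit the diagonal lower bound of the normalized reproducing kernel there. Specifically, combining Proposition~\ref{weighteddiagonal} (which gives $K^\alpha(z,z) \approx (1+|z|)^\alpha e^{|z|^2}$) with Proposition~\ref{t:lowerbound} (which gives $|K^\alpha(z,w)| \gtrsim (1+|z|)^\alpha e^{\frac12|z|^2+\frac12|w|^2}$ on $B(z,r)$ for sufficiently small $r$), we obtain the pointwise lower bound
\begin{equation*}
|k_z^\alpha(w)|^2 = \frac{|K^\alpha(z,w)|^2}{K^\alpha(z,z)} \gtrsim (1+|z|)^\alpha e^{|w|^2}, \qquad w \in B(z,r).
\end{equation*}
The hypothesis $|z| \geq 2r$ plays a purely geometric role: on $B(z,r)$ it forces $|w| \in [|z|-r,|z|+r]$, so $|w| \approx |z| \approx 1+|z|$ with constants depending only on $r$. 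This will let me freely interchange $|w|^\alpha$, $|z|^\alpha$ and $(1+|z|)^\alpha$ inside the integrals over $B(z,r)$.

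For $\alpha \leq 0$, the argument is direct. Dropping the integrand outside $B(z,r)$ and inserting the pointwise lower bound on $|k_z^\alpha(w)|^2$ gives
\begin{equation*}
\tilde\mu(z) \geq \int_{B(z,r)} |k_z^\alpha(w)|^2 \frac{e^{-|w|^2}}{|w|^\alpha}\,d\mu(w) \gtrsim \int_{B(z,r)} \frac{(1+|z|)^\alpha}{|w|^\alpha}\,d\mu(w) \approx \mu(B(z,r)),
\end{equation*}
since $(1+|z|)^\alpha |w|^{-\alpha} \approx 1$ on $B(z,r)$.

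The case $\alpha > 0$ is the main obstacle, because $\tilde\mu(z)$ is defined as a sum of two integrals with different weights (one with $e^{-|w|^2}$ and one with $e^{-|w|^2}/|w|^\alpha$) applied to $(k_z^\alpha)^-_{\alpha/2}$ and $(k_z^\alpha)^+_{\alpha/2}$ separately, and the pointwise lower bound from Proposition~\ref{t:lowerbound} is only available for the full kernel $k_z^\alpha$, not for its truncation parts. To get around this I will use the condition $|z| \geq 2r$ once more, which forces $|w| \geq r$ on $B(z,r)$ and therefore $e^{-|w|^2} \geq r^\alpha \, e^{-|w|^2}/|w|^\alpha$. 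Substituting this into the first integral defining $\tilde\mu(z)$ puts both terms on the same footing:
\begin{equation*}
\tilde\mu(z) \gtrsim \int_{B(z,r)} \bigl(|(k_z^\alpha)^-_{\alpha/2}(w)|^2 + |(k_z^\alpha)^+_{\alpha/2}(w)|^2\bigr)\frac{e^{-|w|^2}}{|w|^\alpha}\,d\mu(w).
\end{equation*}
The elementary inequality $|a+b|^2 \leq 2(|a|^2 + |b|^2)$ applied to $k_z^\alpha = (k_z^\alpha)^-_{\alpha/2} + (k_z^\alpha)^+_{\alpha/2}$ then reduces the problem to estimating the full kernel, and I can finish by the same computation as in the case $\alpha \leq 0$: inserting $|k_z^\alpha(w)|^2 \gtrsim (1+|z|)^\alpha e^{|w|^2}$ and using $|w|^{-\alpha} \approx (1+|z|)^{-\alpha}$ on $B(z,r)$ gives $\tilde\mu(z) \gtrsim \mu(B(z,r))$.

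Throughout, the implicit constants depend on $r$ and $\alpha$, which is consistent with the statement. The only delicate point is verifying that the equivalence $|w|^\alpha \approx (1+|z|)^\alpha$ on $B(z,r)$ does not require $|z|$ to be large in any absolute sense, but only that $|z| \geq 2r$; I will record this as the first observation of the proof.
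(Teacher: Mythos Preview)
Your proof is correct and follows essentially the same approach as the paper: both arguments invoke Propositions~\ref{weighteddiagonal} and~\ref{t:lowerbound} for the diagonal lower bound, handle $\alpha \leq 0$ directly, and for $\alpha > 0$ use the observation that $|w| \geq r$ on $B(z,r)$ (from $|z| \geq 2r$) to replace $e^{-|w|^2}$ by $r^\alpha e^{-|w|^2}/|w|^\alpha$ and then apply $|a+b|^2 \leq 2(|a|^2+|b|^2)$ to recombine the Taylor pieces into the full kernel. The only difference is cosmetic: the paper first restricts to $\{|w|\geq r\}$ and then to $B(z,r)$, while you restrict to $B(z,r)$ immediately.
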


\begin{proof} First assume that $\alpha \leq 0$.
By Propositions \ref{weighteddiagonal} and
 \ref{t:lowerbound}, we have
\begin{align*}
\tilde\mu(z)&=\incn |k_z^\alpha(w)|^2 \frac{e^{-|w|^2}}{|w|^\alpha} \, d\mu(w)\\
&=\incn\frac{|K^\alpha(z,w)|^2}{K^\alpha(z,z)}\frac{e^{-|w|^2}}{|w|^\alpha}d\mu(w)\\
&\gtrsim\int_{B(z,r)}\frac{(1+|z|)^{2\alpha}
e^{|z|^2+|w|^2}}{(1+|z|)^\alpha e^{|z|^2}}\frac{e^{-|w|^2}}{|w|^\alpha}d\mu(w)\\
&\gtrsim\mu(B(z,r))
\end{align*} since $|z| \geq 3r$.

If $\alpha  > 0$ then \begin{align*}  \tilde{\mu} (z) & = \incn |(k_z ^\alpha ) ^- _{\frac\alpha 2} (w) | ^2 e^{- |w|^2} \,  d\mu(w) + \incn |(k_z ^\alpha ) ^+  _{\frac\alpha 2} (w) |^2  \,
\frac{e^{-|w|^2}}{|w|^\alpha} \, d\mu(w) \\ & \gtrsim \int_{|w| \geq r }  |(k_z ^\alpha ) ^- _{\frac\alpha 2} (w) | ^2 \frac{e^{- |w|^2}}{|w|^\alpha}  \,  d\mu(w) + \int_{|w| \geq r}  |(k_z ^\alpha ) ^+  _{\frac\alpha 2} (w) |^2  \,
\frac{e^{-|w|^2}}{|w|^\alpha} \, d\mu(w) \\ & \gtrsim \int_{|w| \geq r} |k_z^\alpha(w)|^2 \frac{e^{-|w|^2}}{|w|^\alpha} \, d\mu(w). \end{align*} The case $\alpha > 0$ is now identical to the case $\alpha \leq 0$ under the assumption that $|z| \geq 2r$ since then $B(z, r) \subseteq \{w  : |w| \geq r\}$.
\end{proof}

Note that by \eqref{Condition M} we have that \begin{equation*} \sup_{z \in B(0, 2r)} \mu(B(z, r)) < \infty \end{equation*} for any $r > 0$ (where the supremum obviously depends on $r > 0$).

\begin{pro}
\label{ksnorm}
If $0<p<\infty$ and $\alpha$ is real, then
\begin{align*}
\|k^\alpha_z\|_{F^p_\alpha}
\approx (1+|z|)^{\left(\frac 12-\frac 1p\right)\alpha}
\end{align*}
for every $z \in \Cn$.
\end{pro}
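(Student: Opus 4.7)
The plan is to compute $\|k_z^\alpha\|_{F_\alpha^p}^p$ directly using the pointwise kernel bounds developed in this section. Writing $k_z^\alpha(w) = K^\alpha(z,w)/\sqrt{K^\alpha(z,z)}$ and invoking Proposition \ref{weighteddiagonal} to obtain $\sqrt{K^\alpha(z,z)} \approx (1+|z|)^{\alpha/2} e^{|z|^2/2}$, the proposition reduces to the two-sided estimate
\begin{equation*}
J(z) := \int_{\Cn} |K^\alpha(z,w)|^p e^{-p|w|^2/2} (1+|w|)^{-\alpha} \, dv(w) \approx (1+|z|)^{p\alpha - \alpha} e^{p|z|^2/2},
\end{equation*}
since then $\|k_z^\alpha\|_{F_\alpha^p}^p = K^\alpha(z,z)^{-p/2} J(z) \approx (1+|z|)^{p\alpha/2-\alpha} = (1+|z|)^{p(1/2-1/p)\alpha}$.

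For the upper bound, Lemma \ref{weightedmodified} combined with the elementary inequality $(1+|z\cdot\overline{w}|)^{\alpha/2} \leq (1+|z||w|)^{\alpha/2}$ when $\alpha > 0$ gives, uniformly in the sign of $\alpha$, the pointwise bound $|K^\alpha(z,w)|^p \lesssim (1+|z||w|)^{p\alpha/2} E(z,w)^p$. Since $E(z,w)^p e^{-p|w|^2/2} = e^{p|z|^2/2} e^{-p|z-w|^2/8}$, the task becomes showing
\begin{equation*}
\int_{\Cn} (1+|z||w|)^{p\alpha/2}(1+|w|)^{-\alpha} e^{-p|z-w|^2/8}\, dv(w) \lesssim (1+|z|)^{p\alpha - \alpha}.
\end{equation*}
After substituting $w=z+u$, I would split into the near region $|u|\leq|z|/2$ and the far region $|u|>|z|/2$. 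On the near region $|z+u|\approx 1+|z|$ so both polynomial factors are comparable to $(1+|z|)^{p\alpha}$ and $(1+|z|)^{-\alpha}$ respectively, while the residual Gaussian integrates to $O(1)$. On the far region, the super-polynomial decay $e^{-p|u|^2/8} \leq e^{-p|z|^2/32}$ swamps any polynomial growth in $|z|$ or $|u|$.

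For the lower bound, Proposition \ref{t:lowerbound} provides an $r>0$ with $|K^\alpha(z,w)| \gtrsim (1+|z|)^\alpha e^{|z|^2/2+|w|^2/2}$ for $w \in B(z,r)$. Since $(1+|w|) \approx (1+|z|)$ on this ball, the integrand of $J(z)$ is pointwise bounded below by a constant times $(1+|z|)^{p\alpha-\alpha} e^{p|z|^2/2}$, and integrating over $B(z,r)$ (of fixed positive volume) produces the matching lower bound.

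The main technical hurdle is the upper estimate for $\alpha < 0$: there $(1+|z||w|)^{p\alpha/2}$ is itself a decreasing function of $|w|$, so the naive separation $(1+|z||w|) \leq (1+|z|)(1+|w|)$ goes the wrong way under a negative exponent. The region split rescues this — on the near region the uniform comparison $1+|z||w|\approx (1+|z|)^2$ is direct, while on the far region the crude bound $(1+|z||w|)^{p\alpha/2} \leq 1$ is harmless once paired with the Gaussian — so the bookkeeping collapses to the same power $(1+|z|)^{p\alpha-\alpha}$ regardless of the sign of $\alpha$.
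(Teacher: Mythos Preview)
Your argument is correct and structurally matches the paper's: both normalize with Proposition \ref{weighteddiagonal}, obtain the upper bound from Lemma \ref{weightedmodified}, and obtain the lower bound by restricting the integral to $B(z,r)$ via Proposition \ref{t:lowerbound}. The one place you diverge is in estimating the weighted Gaussian integral for the upper bound. The paper factorizes $(1+|z||w|)^{\alpha/2}\lesssim(1+|z|)^{\alpha/2}(1+|w|)^{\alpha/2}$ and then absorbs the residual $(1+|w|)$-power into the Gaussian via the inequality $(1+|z|)/(1+|w|)\le 1+|z-w|$, with no region split at all. You instead substitute $w=z+u$ and split into $|u|\le|z|/2$ and $|u|>|z|/2$. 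Your route is a bit more honest for $\alpha<0$, where the paper's factorization step is not literally valid pointwise (take $w=0$ and $|z|$ large); your near/far decomposition sidesteps exactly the issue you flagged, at the cost of one extra case, while the paper's slicker inequality really only covers $\alpha\ge 0$ as written.
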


\begin{proof}
By Lemma \ref{weightedmodified}, we have
 \begin{align*}
\|K^\alpha(\,\cdot\,,z)\|_{F^p_\alpha}^p
&=\incn\left|K^\alpha(w,z)
 e^{-\frac12|w|^2}\right|^p\,dv_\alpha(w)\\
&\lesssim \incn \left[(1+|z| |w|)^{\frac\alpha 2}E(z,w)
\right]^p e^{-\frac p2|w|^2}\,dv_\alpha(w) \\
&\lesssim \incn\left[(1+|z|)^{\frac\alpha 2}(1+|w|)^{\frac\alpha 2}E(w,z)
\right]^p e^{-\frac p2|w|^2}\,dv_\alpha(w).
\end{align*} where as before \begin{equation*} E(z, w) =
e^{\frac12 |z|^2 + \frac12 |w|^2 - \frac18 |z - w|^2}. \end{equation*}

Therefore, since \begin{equation*} \frac{1 + |z|}{1 + |w|} \leq 1 + |z - w|, \end{equation*} there is a positive constant $C=C(p,\alpha)$ such that
$$
\|K^\alpha(\,\cdot\,,z)\|_{F^p_\alpha}
\le C (1+|z|)^{\alpha-\frac\alpha p} e^{\frac {|z|^2}{2}}.
$$

Combining this with Proposition \ref{weighteddiagonal}, we see that
\begin{align*} \|k^\alpha_z\|_{F^p_\alpha}
\lesssim (1+|z|)^{\left(\frac 12-\frac 1p\right)\alpha}. \end{align*}

To get the lower estimate, fix $r>0$ small enough that Proposition
\ref{t:lowerbound} holds. Then
\begin{eqnarray*}
\|k^\alpha_z\|^p_{F^p_\alpha} &=& \incn |k^\alpha_z(w)|^p e^{-\frac p2|w|^2}\,dv_\alpha(w)\\
&\geq& \int_{B(z,r)} |k^\alpha_z(w)|^p e^{-\frac p2|w|^2}\,dv_\alpha(w)\\
&=& \int_{B(z,r)} \frac{|K^\alpha(z,w)|^p}{|K^\alpha(z,z)|^{\frac p2}} e^{-\frac p2|w|^2}
(1+|w|)^{-\alpha} \,dv(w)\\
&\gtrsim& (1+|z|)^{\left(\frac p2 -1\right)\alpha}
\end{eqnarray*} which completes the proof.
\end{proof}

\begin{def}\label{Carleson} Let $1\leq p <\infty$. A nonnegative Borel measure $\mu$
on $\Cn$ is called a \emph{Carleson measure} for $F^p_\alpha$ if \begin{equation*} \incn \left|f^-_{\frac\alpha p}(z) e^{- \frac12 |z|^2}\right| ^p \,  d\mu(z) + \incn \left|f^+_{\frac\alpha p}(z) e^{-\frac{1}{2} |z|^2}\right|^p   |z|^{- \alpha} \, d\mu(z) \lesssim  \|f\|_{F_\alpha ^p} ^p \end{equation*} when $\alpha > 0$ and  \begin{equation*} \incn \left|f(z) e^{- \frac{1}{2} |z|^2}\right|^p |z|^{-\alpha} \,  d\mu(z)   \lesssim  \|f\|_{F_\alpha ^p} ^p \end{equation*} when $\alpha \leq 0$.  Note that our definition of a Carleson measure for $F_\alpha ^p$ is fundamentally different than than the definition of a Carleson measure for $F_\alpha ^p$ in \cite{CCK2}.  However, our definition is far more convenient for proving results about Toeplitz operators on $F_\alpha ^p$.  Furthermore, as the next theorem shows, our definition is equivalent (modulo the weight factor $(1 + |z|)^\alpha$) to the definition in \cite{CCK2}.
\end{def}

\begin{thm}\label{t:Carleson}
If $\mu$ is a nonnegative Borel measure on $\Cn$ and $\alpha$ is a real number, then the following are equivalent for any $1 \leq p < \infty$.
\begin{itemize}
\item[(a)] $\mu$ is a Carleson measure for $F^p_\alpha$.

\item[(b)] There exists $r > 0$ such that the function $z \mapsto  \mu (B(z,r)) $ is bounded.
\end{itemize}
\end{thm}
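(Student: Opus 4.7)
The plan is to prove the two implications in the standard way, using sub-mean value estimates for one direction and normalized reproducing-kernel test functions for the other, with careful bookkeeping due to the polynomial/tail splitting required when $\alpha>0$.

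For (b) $\Rightarrow$ (a), fix a lattice $\{a_j\}=s\mathbb Z^{2n}$ with $s<r\sqrt{2/n}$, so that $\{B(a_j,r/2)\}$ covers $\Cn$ with uniformly bounded overlap, and so that $\mu(B(a_j,r/2))\leq M<\infty$ uniformly in $j$ by hypothesis. In the case $\alpha\leq 0$, I would write
\[
\int_{\Cn}|f(z)|^p e^{-\frac{p}{2}|z|^2}|z|^{-\alpha}\,d\mu(z)\leq \sum_j \int_{B(a_j,r/2)}|f(z)|^p e^{-\frac{p}{2}|z|^2}|z|^{-\alpha}\,d\mu(z),
\]
handle the finitely many $a_j$ with $|a_j|\leq 2r$ directly using \eqref{Condition M}, and for the rest replace $|z|^{-\alpha}$ by $(1+|a_j|)^{-\alpha}$ (which is comparable on $B(a_j,r/2)$), then invoke Lemma \ref{mvplem} to bound the pointwise value $|f(z)|^p e^{-p|z|^2/2}$ by its integral average over a slightly enlarged ball $B(a_j,r)$; reabsorbing the weight factor inside the integral as $(1+|\zeta|)^{-\alpha}$ and using finite overlap collapses the sum to $M\|f\|_{F_\alpha^p}^p$. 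For $\alpha>0$, I would run the same argument on each piece of $f=f^-_{\alpha/p}+f^+_{\alpha/p}$ separately; the only extra care is required for the second Carleson integral near the origin, where the singular weight $|z|^{-\alpha}$ is absorbed by the fact that $f^+_{\alpha/p}$ vanishes to order strictly greater than $\alpha/p$ at $0$, so that Cauchy estimates give $|f^+_{\alpha/p}(z)|^p|z|^{-\alpha}\lesssim\|f\|_{F_\alpha^p}^p$ on $|z|\leq 1$.

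For (a) $\Rightarrow$ (b), the idea is to test the Carleson inequality against $f=k^\alpha_{z_0}$. By Proposition \ref{ksnorm}, $\|k^\alpha_{z_0}\|_{F_\alpha^p}^p\approx (1+|z_0|)^{(p/2-1)\alpha}$, and by Proposition \ref{t:lowerbound} one has a small $r_0>0$ with $|k^\alpha_{z_0}(w)|\,e^{-|w|^2/2}\gtrsim(1+|z_0|)^{\alpha/2}$ for every $w\in B(z_0,r_0)$. When $\alpha\leq 0$ and $|z_0|\geq 2r_0$, restricting the Carleson integral to $B(z_0,r_0)$ gives
\[
(1+|z_0|)^{p\alpha/2}(1+|z_0|)^{-\alpha}\mu(B(z_0,r_0))\lesssim (1+|z_0|)^{(p/2-1)\alpha},
\]
so $\mu(B(z_0,r_0))\lesssim 1$; the remaining bounded region is covered by \eqref{Condition M}. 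When $\alpha>0$ I would mimic the proof of Lemma \ref{l:berezin}: restrict both terms of the Carleson left-hand side to $\{|w|\geq r_0\}$ (where $|w|^{-\alpha}\approx (1+|w|)^{-\alpha}$), use the elementary convexity inequality $|a|^p+|b|^p\geq 2^{1-p}|a+b|^p$ to recombine the polynomial and tail pieces of $k^\alpha_{z_0}$ into $|k^\alpha_{z_0}|^p$, and then proceed exactly as in the $\alpha\leq 0$ case.

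The main obstacle is the bookkeeping in the $\alpha>0$ case, where the two-piece definition of the Carleson norm interacts awkwardly with the singular weight $|z|^{-\alpha}$ at the origin. For the forward direction this is resolved by the vanishing of $f^+_{\alpha/p}$ to order greater than $\alpha/p$ at $0$ (so $|z|^{-\alpha}$ is absorbed via Cauchy estimates), while for the reverse direction the key point is the convexity recombination $|f^-_{\alpha/p}|^p+|f^+_{\alpha/p}|^p\gtrsim |f|^p$, which lets us treat both terms uniformly on the region $|w|\geq r_0$ exactly as in the unweighted estimate.
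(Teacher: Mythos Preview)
Your proposal is correct and follows essentially the same strategy as the paper's proof: test against normalized reproducing kernels for (a) $\Rightarrow$ (b), and use the sub-mean value estimate (Lemma \ref{mvplem}) for (b) $\Rightarrow$ (a), with separate bookkeeping for the polynomial and tail parts when $\alpha>0$ and Cauchy estimates to absorb the singularity of $|z|^{-\alpha}$ at the origin into the vanishing of $f^+_{\alpha/p}$.

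The only noteworthy differences are cosmetic. For (b) $\Rightarrow$ (a) you discretize via a lattice covering with bounded overlap, whereas the paper uses the continuous Fubini trick $\chi_{B(z,r/2)}(w)\le\chi_{B(w,r)}(z)$ to arrive directly at $\int \mu(B(w,r))\,|f^+_{\alpha/p}(w)|^p e^{-p|w|^2/2}|w|^{-\alpha}\,dv(w)$; both are standard and equivalent. For (a) $\Rightarrow$ (b) you make explicit the convexity inequality $|a|^p+|b|^p\ge 2^{1-p}|a+b|^p$ needed to recombine $(k_z^\alpha)^-_{\alpha/p}$ and $(k_z^\alpha)^+_{\alpha/p}$ on $B(z,r)$ (after noting that for $|w|\ge r$ one may insert the weight $|w|^{-\alpha}$ into the unweighted first term at the cost of a constant $r^\alpha$), while the paper compresses this step into the single line ``since $f=f^-_{\alpha/p}+f^+_{\alpha/p}$ and $\mu$ is a Carleson measure.'' One small slip: when you write ``mimic the proof of Lemma \ref{l:berezin}'' bear in mind that the Carleson definition splits at level $\alpha/p$, not the $\alpha/2$ used in that lemma; the argument is otherwise identical.
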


\begin{proof}

(a) $\Rightarrow$ (b) : Assume that $r > 0$.  By \eqref{Condition M}, it is enough to consider $|z| \geq 2r$. We only prove that (a) $\Rightarrow$ (b) when $\alpha > 0$ since the proof when $\alpha \leq 0$ is similar. By Proposition \ref{ksnorm}, $k^\alpha_z \in F^p_\alpha$ and
$\|k^\alpha_z\|_{F^p_\alpha} \lesssim (1+|z|)^{\left(\frac12 -
\frac1p\right)\alpha}$. Therefore, since $f = f_\frac{\alpha}{p} ^- + f_\frac{\alpha}{p} ^+$ and $\mu$ is a Carleson measure for $F_\alpha ^p$, we have
\begin{align}\label{e:Carleson}
\int_{B(z,r)} \left|k^\alpha_z(w) e^{- \frac{1}{2} |w|^2}\right|^p   |w|^{-\alpha} \, d\mu(w)
&\lesssim \incn \left|k^\alpha_z(w) e^{-\frac 12|w|^2} \right|^p dv_\alpha(w) \nonumber \\
&\lesssim (1+|z|)^{\left(\frac{p}{2} - 1\right)\alpha}
\end{align}
On the other hand,
\begin{equation} \label{e:normalized kernel}
|k^\alpha_z(w)| \gtrsim (1+|z|)^{\frac\alpha 2} e^{\frac 12|w|^2}
\end{equation}
if $r$ is sufficiently small by Propositions \ref{weighteddiagonal} and  \ref{t:lowerbound}. Plugging \eqref{e:normalized kernel} into
\eqref{e:Carleson}, we get the conclusion.

\medskip

(b) $\Rightarrow$ (a) :  Again we only prove that (b) $\Rightarrow$ (a) when $\alpha > 0$ since the case $\alpha \leq 0$ is similar (and in fact simpler.)

 By Lemma \ref{mvplem}, there exists a constant $C=C(r)$ such
that
$$|f(z)e^{- \frac{1}{2} |z|^2} | ^p |z|^{-\alpha} \leq C \int_{B(z,r/2)} \left|f(w) e^{- \frac{1}{2} |w|^2}\right|^p |w|^{-\alpha} \,dv(w)$$ for all $|z| \geq r$ if $f$ is entire.  Since $B(z,r/2) \subset B(w,r)$ for every $w\in B(z, r/2)$, we have
$$\chi_{B(z,r/2)} (w) \leq \chi_{B(w,r)}(z)$$ for every $z,w\in \Cn$, so that
\begin{align*}
\int_{|z| \geq r} \left|f^+_{\frac\alpha p}(z) e^{-\frac{1}{2} |z|^2}\right|^p   & |z|^{- \alpha} \,d\mu(z)
\lesssim \incn \left(\int_{B(z,r/2)} \left|f^+_{\frac\alpha p}(w) e^{- \frac{1}{2} |w|^2}\right|^p |w|^{-\alpha}  \,dv(w) \right) d\mu(z)\\
&= \incn \left(\incn \chi_{B(z, r/2)} (w) \left|f^+_{\frac\alpha p}(w) e^{- \frac{1}{2} |w|^2}\right|^p |w|^{-\alpha}  \,dv(w) \right) d\mu(z)
\\
&\leq \incn \left(\incn \chi_{B(w, r)} (z) \left|f^+_{\frac\alpha p}(w) e^{- \frac{1}{2} |w|^2}\right|^p |w|^{-\alpha}  \,dv(w) \right)
d\mu(z)\\
& = \incn \mu(B(w,r))  \left|f^+_{\frac\alpha p}(w) e^{- \frac{1}{2} |w|^2} \right|^p |w|^{-\alpha} \,dv(w) \\
&\lesssim \incn \left|f^+_{\frac\alpha p}(w)e ^{-\frac 12 |w|^2} \right|^p \,d\tilde{v}_\alpha(w) \\
& \lesssim \|f\|_{F_\alpha ^p} ^p
\end{align*} where the last inequality follows from Lemma \ref{EqOfLpNorms}.

Now, by the Cauchy estimates and Lemma \ref{mvplem}, one can easily prove that \begin{equation*} \sup_{|z| \leq r}  |f^+_{\frac\alpha p}(z)| |z|^{-\frac{\alpha}{p}} \lesssim  \|f\|_{F_\alpha ^p} \end{equation*} for small enough $r > 0$ independent of $f$.  Thus, since condition (b) is independent of $r > 0$, we can assume $r > 0$ is small enough so that
\begin{align*} \int_{|z| < r} \left|f^+_{\frac\alpha p}(z) e^{-\frac{1}{2} |z|^2}\right|^p   |z|^{- \alpha} \,d\mu(z) \lesssim \|f\|_{F_\alpha ^p} \mu(B(0, r)). \end{align*}

Finally, since $\mu$ is a Carleson measure for the classical Fock space $F ^p$ (see \cite{IZ}) and since $\|f^- _{\frac\alpha p}\|_{F_0 ^p} \lesssim \|f \|_{F_\alpha ^p}$ by the Cauchy estimates, we have that \begin{align*}
\incn \left|f^-_{\frac\alpha p}(z) e^{- \frac12 |z|^2}\right| ^p \,  d\mu(z)  \lesssim \incn \left|f^-_{\frac\alpha p}(z) e^{- \frac12 |z|^2}\right| ^p \,  dv(z) \lesssim  \|f \|_{F_\alpha ^p}
\end{align*} which completes the proof.

\end{proof}

By Theorem \ref{t:Carleson}, we see that $\mu$ being Carleson for
$F^p_\alpha$ is independent of $1\leq p<\infty$ and $\alpha \in \mathbb{R}$. With this observation, we will
call a nonnegative Borel measure $\mu$ a \emph{weighted Fock-Carleson measure} if it
is a Carleson measure for $F^p_\alpha$ for some $1\leq p<\infty$.

Now let $F^\infty$ be the space of all entire $f$ such that function $z \mapsto f(z) e^{-
\frac{|z|}{2}}$ is bounded.

\begin{lem}\label{l:BilForm}
If $f$ is a polynomial, $g \in F^\infty$, and $\mu$ is a complex Borel measure on $\Cn$ such that $|\mu|$ is a weighted Fock-Carleson
measure, then \begin{equation*} \langle T_\mu ^\alpha f, g \rangle_\alpha = \incn f(z) \overline{g(z)} \frac{e^{- |z|^2}}{ |z|^\alpha} \, d\mu(z) \end{equation*} if $\alpha \leq 0$ and \begin{equation*}\langle T_\mu ^\alpha f, g \rangle_\alpha =  \incn f^-_{\frac\alpha 2}(z) \overline{g^-_{\frac\alpha 2}(z)} e^{-|z|^2} d\mu(z)
+ \incn f^+_{\frac\alpha 2}(z) \overline{g^+_{\frac\alpha 2}(z)} \frac{e^{-|z|^2}}{|z|^\alpha} \, d\mu(z) \end{equation*} if $\alpha > 0$.
\end{lem}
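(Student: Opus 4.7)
The plan is to unwind both sides, exchange the order of integration by Fubini, and use the reproducing property of $K^\alpha$ (suitably adapted to the two-piece inner product when $\alpha>0$) to collapse the inner integral to a copy of $\overline{g}$. The weighted Fock-Carleson hypothesis on $|\mu|$ together with the Gaussian decay embedded in Lemma \ref{weightedmodified} should supply the absolute convergence needed for Fubini.

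In the case $\alpha\le 0$ I would simply write
\[
\langle T^\alpha_\mu f, g\rangle_\alpha = \incn \bigg[\incn f(w) K^\alpha(z,w)\,e^{-|w|^2}|w|^{-\alpha}\,d\mu(w)\bigg]\overline{g(z)}\,e^{-|z|^2}|z|^{-\alpha}\,dv(z),
\]
apply Fubini, and recognize the inner $z$-integral as $\overline{g(w)}$ via the reproducing property $g(w)=\langle g, K^\alpha_w\rangle_\alpha$. To justify Fubini I plan to use $|g(z)|\lesssim e^{|z|^2/2}$ (since $g\in F^\infty$), the polynomial growth of $f$, the bound $|K^\alpha(z,w)|\lesssim e^{(|z|^2+|w|^2)/2-|z-w|^2/8}$ from Lemma \ref{weightedmodified} (the $(1+|z||w|)^{\alpha/2}$ factor being harmless for $\alpha\le 0$), and a Gaussian convolution estimate reducing the double integral to something of the form $\incn (1+|w|)^N e^{-|w|^2/2}\,d|\mu|(w)$, which is finite by Theorem \ref{t:Carleson} and a standard ball-covering argument.

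For $\alpha>0$ the argument is analogous but is prefaced by the compatibility identity
\[
\overline{(K^\alpha_z)^\pm_{\alpha/2}(w)}=(K^\alpha_w)^\pm_{\alpha/2}(z),
\]
which I would derive by expanding $K^\alpha(z,w)=\sum_\beta z^\beta\,\overline{w^\beta}/\|z^\beta\|_\alpha^2$ in the orthonormal monomial basis. Substituting this identity into \eqref{ToepOpDefPos} rewrites the two integrals defining $T^\alpha_\mu f(z)$, viewed as functions of $z$, as a polynomial of degree $\le\alpha/2$ and an entire function whose Taylor expansion starts beyond degree $\alpha/2$; by uniqueness of Taylor decomposition these must coincide with $(T^\alpha_\mu f)^-_{\alpha/2}(z)$ and $(T^\alpha_\mu f)^+_{\alpha/2}(z)$ respectively. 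Expanding $\langle T^\alpha_\mu f,g\rangle_\alpha$ via \eqref{neginn} and applying Fubini to each of the resulting double integrals then reduces matters to evaluating
\[
\incn (K^\alpha_w)^\pm_{\alpha/2}(z)\,\overline{g^\pm_{\alpha/2}(z)}\,\eta^\pm(z)\,dv(z),
\]
with $\eta^-(z)=e^{-|z|^2}$ and $\eta^+(z)=e^{-|z|^2}|z|^{-\alpha}$. Using monomial orthogonality with respect to $\eta^\pm dv$, together with the norm identities $\|z^\beta\|_\alpha^2=\incn |z^\beta|^2 e^{-|z|^2}\,dv$ for $|\beta|\le\alpha/2$ and $\|z^\beta\|_\alpha^2=\incn|z^\beta|^2 e^{-|z|^2}|z|^{-\alpha}\,dv$ for $|\beta|>\alpha/2$ (both immediate from \eqref{neginn} applied to monomials), each such integral should collapse to $\overline{g^-_{\alpha/2}(w)}$ or $\overline{g^+_{\alpha/2}(w)}$ as appropriate, producing the claimed formula.

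The main obstacle will be the careful Fubini justification in the $\alpha>0$ case, where the singular weight $|w|^{-\alpha}$ near the origin threatens integrability. The saving grace should be that $(K^\alpha_z)^+_{\alpha/2}(w)$ and $f^+_{\alpha/2}(w)$ both vanish at $w=0$ to order strictly greater than $\alpha/2$, so their product more than compensates for $|w|^{-\alpha}$; at infinity the Gaussian decay together with the Fock-Carleson control on $|\mu|$ will handle convergence exactly as in the $\alpha\le 0$ case.
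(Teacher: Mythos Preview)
Your overall strategy matches the paper's: write out the double integral, justify Fubini via the kernel estimates of Lemma~\ref{weightedmodified} together with the Fock-Carleson bound on $|\mu|$, and collapse the inner integral using the reproducing property. The $\alpha\le 0$ case is handled identically in both.

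For $\alpha>0$ there is one genuine difference worth noting. The paper identifies $(T^\alpha_\mu f)^\pm_{\alpha/2}$ by expanding $K^\alpha(w,u)$ as a monomial series and using Stirling's formula to prove the resulting series in $w$ converges absolutely, which simultaneously shows $T^\alpha_\mu f$ is entire and reads off its Taylor pieces. Your route via the symmetry $\overline{(K^\alpha_z)^\pm_{\alpha/2}(w)}=(K^\alpha_w)^\pm_{\alpha/2}(z)$ is cleaner conceptually, but there is a step you have glossed over: from ``for each fixed $w$, the integrand $(K^\alpha_w)^+_{\alpha/2}(z)$ has no $z$-terms of degree $\le\alpha/2$'' you cannot immediately conclude that the \emph{integral} in $w$ has this property. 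You still need to justify pulling the $z$-Taylor expansion outside the $d\mu(w)$ integral (equivalently, differentiating under the integral in $z$, or interchanging a series and an integral). This is exactly the kind of interchange the paper's Stirling computation is designed to secure. The absolute-convergence estimates you already sketch for Fubini are strong enough to supply it, but you should say so explicitly rather than appeal to ``uniqueness of Taylor decomposition'' as if it were automatic.

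Your final step, evaluating the inner $z$-integrals via monomial orthogonality against $\eta^\pm$, is equivalent to the paper's direct invocation of the reproducing property and is fine.
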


\begin{proof}
First we consider the case $\alpha \leq 0$.
By definition we have that
\begin{align*}
\langle  T^\alpha_\mu f, g\rangle_\alpha
 = \int_{\Cn} T_\mu ^\alpha f (w) \overline{g (w) }
 e^{-|w|^2} \,d{\widetilde v}_\alpha (w).
\end{align*}
By arguments that are similar to proof of Theorem \ref{t:Carleson} with $p = 1$, we have that
\begin{align*}
& \incn \incn |K^\alpha (w, u)| |f(u)| |g(w)| e^{- |w|^2}
\frac{e^{- |u|^2}}{ |u|^\alpha} \, d|\mu|(u) \, d\widetilde{v} _\alpha (w) \\ & \lesssim  \incn  \incn
|K^\alpha (w, u)||f(u)| |g(w)| e^{- |w|^2} e^{-|u|^2} \, d\widetilde v_\alpha(u) \, d\widetilde
v _\alpha (w)< \infty.
\end{align*}
Thus, an easy application of Fubini's
theorem and the reproducing property proves Lemma \ref{l:BilForm}  if
$\alpha \leq 0$.

Now we will prove the lemma for $\alpha > 0$.
By definition we have that
\begin{align}
\label{ToepTaylorFormula} \langle  T^\alpha_\mu f, g\rangle_\alpha
=&\int_{\Cn} \left(T_\mu ^\alpha f\right)_{\frac{\alpha}{2}} ^- (w)
\overline{g_{\frac{\alpha}{2}}^-(w)} e^{-|w|^2} \, dv (w)\\
& +\int_{\Cn} \left(T_\mu ^\alpha f \right)_{\frac{\alpha}{2}} ^+(w)
\overline{g_{\frac{\alpha}{2}} ^+(w) }
e^{-|w|^2} \, d{\widetilde v}_\alpha (w). \nonumber
\end{align}

    First, by Fubini's theorem and the proof of Theorem \ref{t:Carleson}, we will have that
\begin{align*}
T_\mu ^\alpha f (w)  =&
\sum_{|m| \leq \frac{\alpha}{2}} \frac{w^m}{m!} \incn  \overline{u}^m f^- _{\frac\alpha 2} (u) e^{- |u|^2} \, d\mu(u) \\ & + \sum_{|m| > \frac{\alpha}{2}} \frac{\Gamma(n +
|m|)}{\Gamma(n - \frac{\alpha}{2} + |m|)}  \frac{w^m}{m!} \incn  \overline{u}^m f^+ _{\frac\alpha 2}(u)
 \frac{e^{- |u|^2}}{|u|^\alpha} \, d\mu(u)
\end{align*}
if we can show that
\begin{align*}
\sum_{|m| >\frac{\alpha}{2}} \frac{ |w|^{|m|} \, \Gamma(n + |m|)}{m! \, \Gamma(n -
\frac{\alpha}{2} + |m|)}   \incn  |u|^{|m|} |f ^+ _{\frac\alpha 2} (u)| \frac{e^{- |u|^2}}{|u|^\alpha}  \,  dv(u) <
\infty \end{align*}
for each $w \in \Cn$. This will then tell us that
\begin{equation} \label{ToepTaylorFormula1}
\left(T_\mu ^\alpha f\right)_{\frac{\alpha}{2}} ^-(w) = \incn
\left(K^\alpha (w, u)\right)_\frac{\alpha}{2} ^-  f _\frac{\alpha}{2} ^- (u) e^{- |u|^2}\, d\mu(u).
\end{equation}

and \begin{equation} \label{ToepTaylorFormula2}
\left(T_\mu ^\alpha f\right)_{\frac{\alpha}{2}} ^+(w) = \incn
\left(K^\alpha (w, u)\right)_\frac{\alpha}{2} ^+  f_\frac{\alpha}{2} ^+ (u) \frac{e^{- |u|^2}}{|u|^\alpha} \, d\mu(u).
\end{equation}

Obviously we may assume that $f(u) = u^{m_0}$ for some multiindex $m_0$ with $|m_0| > \frac{\alpha}{2}$. Note that Stirling's formula
tells us that $\frac{\Gamma(n + |m|)}{ \Gamma(n - \frac{\alpha}{2} + |m|)} \lesssim
|m|^\frac{\alpha}{2}$. Also note that  \begin{align*}  \incn  |u|^{|m| + |m_0| - \alpha} e^{-
|u|^2} \, dv  (u) & \lesssim \incn |u|^{|m| + |m_0| - \frac{\alpha}{2}} e^{-
|u|^2} \, dv(u)  \\  &  \lesssim  \Gamma   \left(n + \frac{|m|+ |m_0|}{2} -
\frac{\alpha}{4}\right).   \end{align*}  Combining this with the
elementary inequality $(m!)^{-1} \leq n^{|m|} (|m|!)^{-1}$ we get that
\begin{align*} & \sum_{|m| > \frac{\alpha}{2}} \frac{|w|^{|m|} \, \Gamma(n +
|m|)}{m! \, \Gamma(n - \frac{\alpha}{2} + |m|)}   \incn  |u|^{|m| + |m_0|}  e^{-
|u|^2} \, dv _\alpha (u)  \nonumber \\ &  \lesssim \sum_{|m| > \frac{\alpha}{2}}
 |w|^{|m|} (|m|!)^{-1} n ^{|m|}  |m|^\frac{\alpha}{2}   \Gamma\left(n + \frac{|m| +
|m_0|}{2} - \frac{\alpha}{4}\right)  < \infty \end{align*} for each $w \in \Cn$ by
another application of Stirling's formula.

Finally, since the proof of Theorem \ref{t:Carleson} tells us that
\begin{align*}
&\incn \incn \left| \left(K^\alpha (w, u)\right)_\frac{\alpha}{2} ^- \right|
\left|f_\frac{\alpha}{2} ^- (u)\right|  \left|g_{\frac{\alpha}{2}} ^-(w) \right| e^{-|w|^2}
e^{- |u|^2} \, d\mu(u) \, dv(w) \\ & \lesssim \incn \incn \left| \left(K^\alpha (w,
u)\right)_\frac{\alpha}{2} ^- \right|  \left|f_\frac{\alpha}{2} ^- (u)\right|
\left|g_{\frac{\alpha}{2}} ^-(w) \right| e^{-|w|^2} e^{- |u|^2} \,
dv(u) \, dv (w) < \infty
\end{align*}
and
\begin{align*} & \incn \incn \left|
\left(K^\alpha (w, u)\right)_\frac{\alpha}{2} ^+ \right|  \left|f_\frac{\alpha}{2} ^+  (u)\right|
\left|g_{\frac{\alpha}{2}} ^+(w)\right| \frac{e^{-|w|^2}}{|w|^\alpha} \frac{e^{- |u|^2}}{|u|^\alpha} \,
d\mu(u) \, dv(w) \\ & \lesssim \incn \incn \left| \left(K^\alpha
(w, u)\right)_\frac{\alpha}{2} ^+ \right|  \left|f_\frac{\alpha}{2} ^+  (u)\right|
\left|g_{\frac{\alpha}{2}} ^+(w)\right| \frac{e^{-|w|^2}}{|w|^\alpha} \frac{e^{- |u|^2}}{|u|^\alpha} \,
dv(u) \, dv (w) < \infty,
\end{align*}
we can plug
$(\ref{ToepTaylorFormula1})$  and $(\ref{ToepTaylorFormula2})$ into $(\ref{ToepTaylorFormula})$  and use Fubini's theorem and the reproducing property to
complete the proof when $\alpha > 0$.
\end{proof}

\begin{cor}\label{c:Berezin} If $\mu$ Fock-Carleson measure, then
\begin{align*}
\tilde\mu(z)=\langle  T^\alpha_\mu k_z^\alpha, k_z^\alpha\rangle_\alpha,
\quad z\in\Cn.
\end{align*}
\end{cor}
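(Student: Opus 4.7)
The plan is to apply Lemma \ref{l:BilForm} in the case $f = g = k_z^\alpha$ via a density argument, noting that the right-hand side of that lemma collapses by inspection to $\tilde\mu(z)$ in this case. First I would verify that $k_z^\alpha \in F^\infty$ so that it is a legitimate choice for the $g$-slot in Lemma \ref{l:BilForm}: combining Lemma \ref{weightedmodified} with the normalization $\sqrt{K^\alpha(z,z)} \approx (1+|z|)^{\alpha/2} e^{|z|^2/2}$ from Proposition \ref{weighteddiagonal} gives $|k_z^\alpha(w)|e^{-|w|^2/2} \lesssim e^{-|z-w|^2/8}$, which is bounded uniformly in $w$.

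Denoting by $R(f,g)$ the right-hand side of Lemma \ref{l:BilForm} (the single integral when $\alpha \leq 0$, the two-piece integral when $\alpha > 0$), the next step is to pick polynomials $p_N \to k_z^\alpha$ in $F^2_\alpha$ (density from Proposition 2.3 of \cite{CCK2}), apply Lemma \ref{l:BilForm} with $f=p_N$, $g=k_z^\alpha$, and pass to the limit. Cauchy--Schwarz together with Theorem \ref{t:Carleson} gives the bilinear bound $|R(f,g)| \lesssim \|f\|_{F^2_\alpha}\|g\|_{F^2_\alpha}$ (applied separately to the $(\cdot)^-_{\alpha/2}$ and $(\cdot)^+_{\alpha/2}$ terms when $\alpha > 0$), so $R(p_N, k_z^\alpha) \to R(k_z^\alpha, k_z^\alpha)$, which is visibly $\tilde\mu(z)$ from comparing the $R$-formula with the definition of $\tilde\mu$.

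The main obstacle is then to identify the limit of the left side with $\langle T_\mu^\alpha k_z^\alpha, k_z^\alpha\rangle_\alpha$ as defined through $(\ref{ToepOpDefNeg})$/$(\ref{ToepOpDefPos})$, rather than merely with the continuous extension of a functional initially defined on polynomials. I would handle this by unpacking the definitions directly and performing the same Fubini swap used in the proof of Lemma \ref{l:BilForm}: for $\alpha \leq 0$, after writing $\langle T_\mu^\alpha k_z^\alpha, k_z^\alpha\rangle_\alpha$ as an iterated integral in $(u, w)$, the inner integral $\int K^\alpha(w, u) \overline{k_z^\alpha(w)} e^{-|w|^2}|w|^{-\alpha}\, dv(w)$ equals $\overline{k_z^\alpha(u)}$ by the reproducing property, which produces $\tilde\mu(z)$ immediately. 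The $\alpha > 0$ case is the same maneuver applied piecewise on the $(\cdot)^-_{\alpha/2}$ and $(\cdot)^+_{\alpha/2}$ parts of $k_z^\alpha$. The technical heart is the Fubini justification: Cauchy--Schwarz bounds the inner integral by $\|K_u^\alpha\|_{F^2_\alpha}\|k_z^\alpha\|_{F^2_\alpha}$ via Proposition \ref{ksnorm}, after which the outer integral in $\mu$ is finite by Theorem \ref{t:Carleson} together with the rapid decay $|k_z^\alpha(u)|e^{-|u|^2/2} \lesssim e^{-|z-u|^2/8}$ noted above.
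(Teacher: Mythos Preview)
Your proposal is correct, and your second route (unwinding $\langle T_\mu^\alpha k_z^\alpha, k_z^\alpha\rangle_\alpha$ as an iterated integral, swapping by Fubini, and collapsing the inner $dv$-integral via the reproducing property) is in spirit the same as the paper's proof. The paper, however, packages it more economically: it observes directly from the definitions of $\tilde\mu$ and $T_\mu^\alpha$ that
\[
\tilde\mu(z)=\frac{\bigl[T_\mu^\alpha K^\alpha(\cdot,z)\bigr](z)}{K^\alpha(z,z)},
\]
then checks that $T_\mu^\alpha K^\alpha(\cdot,z)\in F_\alpha^2$ (entire by the estimates in the proof of Lemma~\ref{l:BilForm}, in $L_\alpha^2$ by Theorem~\ref{t:Carleson}) and applies the reproducing property once to convert the point evaluation into $\langle T_\mu^\alpha K_z^\alpha, K_z^\alpha\rangle_\alpha$. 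Normalizing gives the result. This sidesteps both the density argument and the explicit Fubini verification. Your density approach via polynomial approximants and continuity of the bilinear form $R$ is a legitimate alternative, but as you yourself note, it only identifies the continuous extension, so the direct computation is still needed; once you have the direct computation, the density argument is redundant.
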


\begin{proof} By definition, we have that
\begin{align*} \widetilde{\mu}(z) = \frac{\left[ T_\mu ^\alpha K^\alpha (\cdot, z)
\right] (z) }{K^\alpha (z, z)}. \end{align*} Note that Theorem \ref{t:Carleson}
implies that $T_\mu ^\alpha K^\alpha (\cdot, z) \in L_\alpha ^2$ for each $z
\in \Cn$. Moreover the estimates from the proof of Lemma \ref{l:BilForm} tell us that $T_\mu ^\alpha K^\alpha (\cdot, z)$ is entire for each $z \in
\Cn$.  The reproducing property of $F ^2 _\alpha$ and the definition of $k_z
^\alpha$ immediately completes the proof.
\end{proof}

\begin{thm}\label{t:Boundedness} Let $\mu$ be a nonnegative Borel measure on $\Cn$ and let $\alpha$ be a real number.
Then the following are equivalent for every $1\leq p < \infty$.
\begin{itemize}
\item[(a)] The Toeplitz operator $T^\alpha_\mu$ extends to a bounded operator on $F^p_\alpha$.

\item[(b)] The function $z \mapsto \mu (B(z,r)) $ is bounded for
every $r > 0$.

\item[(c)] The Berezin transform $\tilde\mu$ is bounded on $\Cn$.
\end{itemize}
\end{thm}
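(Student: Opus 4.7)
The plan is to complete the cycle (c) $\Rightarrow$ (b) $\Rightarrow$ (a) $\Rightarrow$ (c), using the machinery developed earlier in this section.

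The implication (c) $\Rightarrow$ (b) is essentially immediate from Lemma \ref{l:berezin}: for small enough $r > 0$ we have $\mu(B(z,r)) \lesssim \tilde\mu(z)$ whenever $|z| \geq 2r$, while $\sup_{|z| < 2r} \mu(B(z,r)) < \infty$ follows directly from condition \eqref{Condition M}, as remarked just after Lemma \ref{l:berezin}. Independence from the radius (noted in the introduction) then yields (b) for every $r > 0$.

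For (b) $\Rightarrow$ (a), we first invoke Theorem \ref{t:Carleson} to conclude that $\mu$ is a weighted Fock--Carleson measure. When $1 < p < \infty$, we use the duality $(F^p_\alpha)^* \simeq F^q_\alpha$ under the pairing $\langle \cdot, \cdot \rangle_\alpha$ (cf.\ the discussion following \eqref{ProdEqn}, where $q$ is the conjugate exponent of $p$), so that it suffices to bound $|\langle T^\alpha_\mu f, g\rangle_\alpha|$ by a constant times $\|f\|_{F^p_\alpha} \|g\|_{F^q_\alpha}$ for polynomials $f$ and $g \in F^\infty$, both of which form dense subsets. Lemma \ref{l:BilForm} represents this sesquilinear form as a (finite) sum of integrals against $d\mu$; H\"older's inequality, with the weight $|z|^{-\alpha}$ split as $|z|^{-\alpha/p} \cdot |z|^{-\alpha/q}$, then reduces the estimate to two Carleson-type integrals controlled by Definition \ref{Carleson}. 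The main bookkeeping obstacle is that, for $\alpha > 0$, Lemma \ref{l:BilForm} splits at Taylor order $\alpha/2$ while Definition \ref{Carleson} splits at $\alpha/p$ (respectively $\alpha/q$); the difference is a polynomial of bounded degree, which we handle separately using the Cauchy estimates together with Lemma \ref{mvplem}. The case $p = 1$ is analogous, using the pairing $F^1_\alpha \times F^\infty$.

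The implication (a) $\Rightarrow$ (c) is obtained by testing against normalized reproducing kernels. Working directly from \eqref{ToepOpDefNeg}, \eqref{ToepOpDefPos}, and the definition of $\tilde\mu$, and using the elementary identity $(K^\alpha_z)^\pm_{\alpha/2} = \sqrt{K^\alpha(z,z)}\,(k^\alpha_z)^\pm_{\alpha/2}$ when $\alpha > 0$, a short direct computation produces the pointwise identity
\begin{equation*}
T^\alpha_\mu k^\alpha_z(z) \;=\; \sqrt{K^\alpha(z,z)}\;\tilde\mu(z), \qquad z \in \Cn,
\end{equation*}
valid for every real $\alpha$. Combining this with the pointwise evaluation bound in Lemma \ref{mvplem}, the norm estimate from Proposition \ref{ksnorm}, and the assumed boundedness of $T^\alpha_\mu$ on $F^p_\alpha$ gives
\begin{equation*}
\sqrt{K^\alpha(z,z)}\,\tilde\mu(z) \;=\; |T^\alpha_\mu k^\alpha_z(z)| \;\lesssim\; \|T^\alpha_\mu\|\,(1+|z|)^{\alpha/2}\,e^{|z|^2/2},
\end{equation*}
and Proposition \ref{weighteddiagonal} then yields $\tilde\mu(z) \lesssim \|T^\alpha_\mu\|$ uniformly in $z$. (One could alternatively deduce (a) $\Rightarrow$ (b) first and then invoke Corollary \ref{c:Berezin}, but the direct approach above sidesteps the apparent circularity in which Corollary \ref{c:Berezin} already assumes $\mu$ to be Carleson.)
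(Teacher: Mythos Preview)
Your proposal is correct and follows the same cycle as the paper (the paper phrases it as (a) $\Rightarrow$ (c) $\Rightarrow$ (b) $\Rightarrow$ (a), but this is the same three implications). The implications (c) $\Rightarrow$ (b) and (a) $\Rightarrow$ (c) are handled exactly as in the paper: via Lemma~\ref{l:berezin} for the former, and via the pointwise identity $T^\alpha_\mu k^\alpha_z(z) = \sqrt{K^\alpha(z,z)}\,\tilde\mu(z)$ combined with Lemma~\ref{mvplem}, Proposition~\ref{ksnorm}, and Proposition~\ref{weighteddiagonal} for the latter.

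The one place your argument diverges is (b) $\Rightarrow$ (a). You apply H\"older first and then invoke the Carleson condition for $F^p_\alpha$ and $F^q_\alpha$ separately, which forces you to reconcile the Taylor split at order $\alpha/2$ (coming from Lemma~\ref{l:BilForm}) with the splits at $\alpha/p$ and $\alpha/q$ (coming from Definition~\ref{Carleson}); this works, but the polynomial correction terms must be treated with care near the origin (when $p<2$ the term $|f^+_{\alpha/2}(z)|^p|z|^{-\alpha}$ may blow up there, so one should estimate $\int_{|z|<\delta}|f^+_{\alpha/2}\,\overline{g^+_{\alpha/2}}|\,|z|^{-\alpha}e^{-|z|^2}\,d\mu$ directly via Cauchy estimates before applying H\"older on $\{|z|\ge\delta\}$). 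The paper instead invokes the product identity~\eqref{ProdEqn}, which gives $|f^+_{\alpha/2}g^+_{\alpha/2}|=|(fg)^+_{\alpha}|$ (or $(fg)^+_{\alpha+1}$), and then applies Theorem~\ref{t:Carleson} with $p=1$ to the single holomorphic function $fg$, followed by H\"older on the $dv_\alpha$-side. This bypasses the $\alpha/2$ versus $\alpha/p$ mismatch entirely and is somewhat cleaner, though your route is equally valid once the near-origin issue is handled.
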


\begin{proof}
First, recall from the introduction that (b) is known to be independent of $r > 0$ (again see \cite{IZ}). Now we will prove Theorem \ref{t:Boundedness} by showing that (a) $\Rightarrow$ (c) $\Rightarrow$
(b) $\Rightarrow$ (a).

(a) $\Rightarrow$ (c) : Suppose $T^\alpha_\mu : F^p_\alpha\rightarrow F^p_\alpha$
extends to a bounded operator for some $1\leq p < \infty$.  Note that $k^\alpha_z
\in F^p_\alpha$ for every $0<p<\infty$
by Proposition \ref{ksnorm}.  Since \begin{align*} \widetilde{\mu}(z) \approx
\frac{\left[ T_\mu ^\alpha K^\alpha (\cdot, z) \right] (z) }{(1 + |z|)^\alpha
e^{|z|^2}}, \end{align*}  Lemma \ref{mvplem}, Proposition $\ref{ksnorm}$, and the boundedness of
$T_\mu ^\alpha $ on $F_\alpha ^p$ gives us that \begin{align} (\widetilde{\mu}(z))^p
& \lesssim (1 + |z|)^{(1 - p)\alpha } e^{- \frac{p}{2} |z|^2} \|T_\mu ^\alpha
K^\alpha (\cdot, z) \|_{F_\alpha ^p} ^p  \nonumber \\ & \leq  (1 + |z|)^{(1 -
p)\alpha } e^{- \frac{p}{2} |z|^2} \|T_\mu ^\alpha \|_{F_\alpha ^p} ^p \| K^\alpha
(\cdot, z) \|_{F_\alpha ^p} ^p   \nonumber \\ & \lesssim   \|T_\mu ^\alpha
\|_{F_\alpha ^p} ^p \nonumber \end{align}
\medskip

(c) $\Rightarrow$ (b) : This follows immediately from Lemma  \ref{l:berezin}.

\medskip

(b) $\Rightarrow$ (a) : For $1 <  p <  \infty $, let $ 1< q < \infty$ be the
conjugate exponent. We will only prove that (b) $\Rightarrow$ (a) for $\alpha > 0$ since the proof when $\alpha \leq 0$ is similar.  By the proof of Theorem $5.5$ in \cite{CCK2}, we have that $(F_\alpha ^p)^* =
F_\alpha ^q$ under the pairing $\langle \cdot, \cdot \rangle_\alpha$.  Note that by Lemma \ref{l:BilForm}, Equation (\ref{ProdEqn}), and Theorem \ref{t:Carleson} for $p = 1$,  we have that \begin{align*}
|\langle T^\alpha_\mu f, g\rangle_\alpha|  &\leq  \incn \left|f^-_{\frac\alpha 2}(z) \overline{g^-_{\frac\alpha 2}(z)} \right| e^{-|z|^2} d\mu(z) + \incn \left| f^+_{\frac\alpha 2}(z) \overline{g^+_{\frac\alpha 2}(z)}\right| \frac{e^{-|z|^2}}{|z|^\alpha} \, d\mu(z)
\\ &\lesssim \|f\|_{F^p_\alpha} \|g\|_{F^q_\alpha}
\end{align*} if $f$ is a polynomial and $g \in F^\infty$. Also note that the estimates from the proof
of Lemma \ref{l:BilForm} allow us to conclude that $T_\mu ^\alpha f$ is entire. Thus,
since polynomials are dense in $F_\alpha ^p$ for all $1 \leq  p < \infty$, we have that $T^\alpha_\mu$ extends to a bounded operator on $F_\alpha ^p$.

Now if $p = 1$ then the same arguments gives us that \begin{equation*} |\langle T_\mu ^\alpha f, g \rangle_\alpha | \lesssim \|f\|_{F_\alpha ^1} \|g\|_{F^\infty} \end{equation*} for $f$ a polynomial and $g \in F^\infty$ which completes the proof since the proof of Theorem $5.7$ in \cite{CCK2} tells us that $(F_\alpha ^1)^* = F^\infty$ under the pairing $\langle \cdot, \cdot \rangle_\alpha$.

\end{proof}

If $\mu$ is a nonnegative Borel measure, then define the norm $\|\cdot\|_{L^p _{\alpha, \mu} \cap H(\Cn)}$ on $L^p _{\alpha, \mu} \cap H(\Cn)$ by \begin{equation*} \label{LpMuNorm1} \|f\|_{L^p _{\alpha, \mu} \cap H(\Cn)} ^p =  \incn \left|f^-_{\frac\alpha p}(z) e^{- \frac{1}{2} |z|^2}\right| ^p \,  d\mu(z) + \incn \left|f^+_{\frac\alpha p}(z)  e^{-\frac12 |z|^2}\right|^p
|z|^{-\alpha} \, d\mu(z) \end{equation*} when $\alpha > 0$ and \begin{equation*} \label{LpMuNorm2} \|f\|_{L^p _{\alpha, \mu} \cap H(\Cn)} ^p =  \incn \left|f(z) e^{- \frac12 |z|^2}\right| ^p   |z|^{-\alpha} \, d\mu (z) \end{equation*} when $\alpha \leq 0$

If $1 \leq p < \infty$ and $\mu$ is a weighted
Fock-Carleson measure, then the inclusion
\begin{equation*} \iota_p : F_\alpha ^p \rightarrow L^p_{\alpha, \mu} \cap H(\Cn) \end{equation*} is bounded. We call $\mu$ a
\emph{vanishing weighted Fock-Carleson measure} for $F^p_\alpha$ if $\iota_p$ is
compact.  Note that as one would expect, a simple argument involving Montell's theorem and the standard Cauchy estimates tells us that $\mu$ is a vanishing weighted Fock-Carleson measure if and only if $\|f_j\|_{L^p_{\alpha, \mu} \cap H(\Cn)} \rightarrow 0$ whenever $\{f_j\}$ is a bounded sequence in $F_\alpha ^p$ where $f_j \rightarrow 0$ uniformly on compact subsets of $\Cn$.

%If $1<p<\infty$ then even though $L^p  _{\alpha, \mu} \cap H(\Cn)$ is not necessarily a Banach space, the reflexitivity of %$F_\alpha ^p$ and the Banach-Alaoglu theorem
%tells us that  $\mu$ is a vanishing weighted Fock-Carleson measure for $F^p_\alpha$
%if and only if
%$\| f_j\|_{L^p _{\alpha, \mu}} \rightarrow 0$ whenever $\{f_j\} \subset F^p_\alpha$ and $f_j
%\rightarrow 0$ weakly.

We conclude this section with a characterization of vanishing weighted Fock-Carleson
measures that is similar to Theorem \ref{t:Boundedness}.  First, however, we need to prove some standard results for the $F_\alpha ^p$ setting.

Now if $F^{\infty, 0}$ is the subspace of $F^\infty$ such that $z \mapsto f(z) e^{-
\frac{|z|}{2}}$ vanishes at infinity, then by the proof of Theorem $5.8$ in \cite{CCK2} we have that $F_\alpha ^1 = (F^{\infty, 0})^*$ under the pairing $\langle \cdot , \cdot \rangle_\alpha$.

\begin{lem} \label{l:Weak Compactness}  If $1 \leq p < \infty$ then a sequence $\{f_j\} \subset F_\alpha ^p$ converges weak${}^*$ to $0$ if and only if $\{\|f_j\|_{F_\alpha ^p}\}$ is bounded and $f_j \rightarrow 0$ uniformly on compact subsets of $\Cn$.\end{lem}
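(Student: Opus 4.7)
The plan is to treat the two implications separately, in each case making use of the appropriate predual: $F^{\infty,0}$ when $p=1$ and $F_\alpha^q$ (where $q$ is the conjugate exponent) when $1<p<\infty$, both paired through $\langle\cdot,\cdot\rangle_\alpha$.

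For the forward direction, assume $f_j\to 0$ in the weak${}^*$ topology. Norm boundedness of $\{f_j\}$ follows at once from the Banach--Steinhaus theorem applied to the sequence viewed on the predual. For pointwise convergence, I would use the reproducing identity $f_j(z)=\langle f_j,K_z^\alpha\rangle_\alpha$. When $1<p<\infty$, Proposition \ref{ksnorm} gives $K_z^\alpha\in F_\alpha^q$, so weak${}^*$ (= weak) convergence directly yields $f_j(z)\to 0$. When $p=1$, I need to verify that $K_z^\alpha$ actually belongs to the predual $F^{\infty,0}$ and not merely to $F^\infty$; this follows from Lemma \ref{weightedmodified}, since the Gaussian factor $e^{-|z-w|^2/8}$ dominates the polynomial factor $(1+|z||w|)^{|\alpha|/2}$ as $|w|\to\infty$ for fixed $z$. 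Once pointwise convergence is in hand, Lemma \ref{mvplem} upgrades norm boundedness to local uniform boundedness of $\{f_j\}$, and then Montel's theorem together with the pointwise identification of the limit forces the whole sequence to converge to $0$ uniformly on compact subsets.

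For the reverse direction, assume $\{\|f_j\|_{F_\alpha^p}\}$ is bounded and $f_j\to 0$ uniformly on compacta. For $g$ in the appropriate predual I would invoke the basic inequality
\begin{equation*}
|\langle f_j,g\rangle_\alpha|\lesssim\int_{\Cn}|f_j(z)g(z)|e^{-\tfrac{1}{2}|z|^2}\,dv_\alpha(z)
\end{equation*}
recorded earlier in the excerpt, and split the integral at $|z|=R$. On $B(0,R)$, the uniform convergence of $f_j$ to $0$ combined with H\"older's inequality (when $p>1$) or with the boundedness of $|g(z)|e^{-|z|^2/2}$ (when $p=1$) shows the contribution tends to $0$ as $j\to\infty$ for each fixed $R$. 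On $\Cn\setminus B(0,R)$, the norm bound on $\{f_j\}$ combined with $\|g\chi_{|z|>R}\|_{F_\alpha^q}\to 0$ by dominated convergence (when $p>1$), or with $\sup_{|z|>R}|g(z)|e^{-|z|^2/2}\to 0$ from $g\in F^{\infty,0}$ (when $p=1$), makes the tail arbitrarily small uniformly in $j$ by choosing $R$ large.

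The main technical annoyance is the case $\alpha>0$, where $\langle\cdot,\cdot\rangle_\alpha$ splits via Taylor projections $f^\pm_{\alpha/2}$ and $g^\pm_{\alpha/2}$ that do not localize cleanly to sublevel sets of $|z|$. However, the scalar inequality displayed above, established earlier in the excerpt via Lemma \ref{EqOfLpNorms} and the product identity \eqref{ProdEqn}, lets me bypass the projections entirely and reduces everything to a single integral, exactly as in the $\alpha\leq 0$ case.
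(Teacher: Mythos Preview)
Your proof is correct. The forward direction matches the paper's: Banach--Steinhaus for norm boundedness, the reproducing identity (with the check that $K_z^\alpha$ lies in the relevant predual) for pointwise convergence, and Montel's theorem to upgrade to local uniform convergence. For the reverse direction, however, you take a genuinely different route. The paper dispatches it abstractly via Alaoglu's theorem: any subsequence of a norm-bounded sequence has a weak${}^*$-convergent further subsequence, whose limit must be $0$ because weak${}^*$ convergence forces pointwise convergence; hence the full sequence converges weak${}^*$ to $0$. Your approach instead gives a direct quantitative estimate, splitting the integral at $|z|=R$ and using the scalar bound $|\langle f_j,g\rangle_\alpha|\lesssim\int_{\Cn}|f_j g|\,e^{-|z|^2}\,dv_\alpha$ to avoid dealing with the Taylor projections when $\alpha>0$. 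The paper's argument is shorter but implicitly relies on separability of the predual (for sequential weak${}^*$ compactness of bounded sets); your argument is more hands-on and self-contained, and makes explicit exactly where the hypothesis $g\in F^{\infty,0}$ (rather than just $F^\infty$) is used when $p=1$.
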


\begin{proof} Since $F_\alpha ^1 = (F^{\infty, 0})^*$ and $F_\alpha ^q = (F_\alpha ^p)^*$ if $1 < p < \infty$ under the pairing $\langle \cdot, \cdot \rangle_\alpha$, the result immediately follows from Alaoglu's theorem, Montell's theorem, and the fact that weak${}^*$ convergence implies pointwise convergence. \end{proof}

\begin{lem} \label{l:Compactness Criteria}   If $1 < p < \infty$ and $X$ is a normed space, then $T : F_\alpha ^p \rightarrow X$ is compact if and only if $\{\|f_j\|_{F_\alpha ^p}\}$ is bounded and $f_j \rightarrow 0$ uniformly on compact subsets of $\Cn$ implies that $\|T f_j \|_X \rightarrow 0$.  Furthermore, sufficiently holds when $p = 1$.  \end{lem}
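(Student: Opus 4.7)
The plan is to prove the two implications separately, relying on reflexivity of $F_\alpha^p$ for the necessity direction and on Montell's theorem combined with Fatou's lemma for sufficiency.

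For the \emph{only if} direction (which requires $1 < p < \infty$), first note that since $(F_\alpha^p)^* = F_\alpha^q$ and $(F_\alpha^q)^* = F_\alpha^p$ under the pairing $\langle \cdot, \cdot\rangle_\alpha$ (by the proof of Theorem 5.5 in \cite{CCK2}), the space $F_\alpha^p$ is reflexive, so weak and weak${}^*$ convergence coincide. Given any bounded sequence $\{f_j\}$ in $F_\alpha^p$ with $f_j \to 0$ uniformly on compact subsets of $\Cn$, Lemma \ref{l:Weak Compactness} gives $f_j \to 0$ weak${}^*$, hence weakly. A compact operator from a reflexive Banach space to a normed space sends weakly convergent sequences to norm convergent ones, so $\|Tf_j\|_X \to 0$.

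For the \emph{if} direction (which works for all $1 \leq p < \infty$), let $\{f_j\}$ be an arbitrary bounded sequence in $F_\alpha^p$. By the pointwise estimate in Lemma \ref{mvplem}, $\{f_j\}$ is locally uniformly bounded, so Montell's theorem produces a subsequence (which I still denote $\{f_j\}$) converging uniformly on compact subsets of $\Cn$ to some entire function $f$. Uniform convergence on compacts preserves Taylor coefficients, so $(f_j)^\pm_{\alpha/p}(z) \to f^\pm_{\alpha/p}(z)$ pointwise. Applying Fatou's lemma to the defining integrals of the equivalent norm $\|\cdot\|_{\widetilde{F_\alpha^p}}$ and invoking Lemma \ref{EqOfLpNorms} gives
\begin{equation*}
\|f\|_{F_\alpha^p} \lesssim \|f\|_{\widetilde{F_\alpha^p}} \leq \liminf_{j\to\infty}\|f_j\|_{\widetilde{F_\alpha^p}} \lesssim \sup_j \|f_j\|_{F_\alpha^p} < \infty,
\end{equation*}
so $f \in F_\alpha^p$. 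Then $\{f_j - f\}$ is bounded in $F_\alpha^p$ and converges to $0$ uniformly on compact subsets, so by hypothesis $\|T(f_j - f)\|_X \to 0$; thus $\{Tf_j\}$ is Cauchy in $X$ and $T$ is compact.

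The main obstacle is justifying that the pointwise limit $f$ extracted from Montell's theorem actually lies in $F_\alpha^p$ with norm controlled by the original bound; this is the only place where $\|\cdot\|_{\widetilde{F_\alpha^p}}$ is used in an essential way, since Fatou's lemma applies cleanly to a literal sum of two integrals of nonnegative integrands against fixed measures, whereas the original $F_\alpha^p$ norm (defined via the radial fractional derivative) does not interact as transparently with pointwise limits. Everything else is a routine application of standard reflexivity and normal family arguments.
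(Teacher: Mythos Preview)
Your proof is correct. The necessity direction matches the paper's argument exactly: both use Lemma~\ref{l:Weak Compactness} together with reflexivity to convert ``bounded plus uniform-on-compacta'' into weak convergence, and then invoke the fact that a compact operator sends weakly null sequences to norm null sequences.

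For sufficiency the paper is terser: it simply says ``follows from Alaoglu's theorem,'' implicitly using the duality $F_\alpha^p = (F_\alpha^q)^*$ (or $F_\alpha^1 = (F^{\infty,0})^*$ when $p=1$) so that a bounded sequence has a weak${}^*$-convergent subsequence whose limit automatically lies in $F_\alpha^p$; Lemma~\ref{l:Weak Compactness} then identifies this with uniform-on-compacta convergence, and the hypothesis finishes. Your route via Montell's theorem plus Fatou is a genuine alternative: it is more hands-on and avoids invoking the predual identification, at the cost of having to check explicitly that the pointwise limit sits in $F_\alpha^p$. Both approaches are standard; yours is arguably more self-contained, while the paper's is cleaner once the duality is in place.

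One minor remark: your concern that Fatou requires passing to the equivalent norm $\|\cdot\|_{\widetilde{F_\alpha^p}}$ is unnecessary. In this paper the $F_\alpha^p$ norm is \emph{defined} directly as the weighted $L^p$ norm $\|f\|_{L_\alpha^p}^p = \int |f(z)e^{-|z|^2/2}|^p\,dv_\alpha(z)$ (the fractional-derivative description is only an equivalent characterization via the Fock--Sobolev identification), so Fatou applies to it immediately. Your detour through $\|\cdot\|_{\widetilde{F_\alpha^p}}$ works, but it is not needed.
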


\begin{proof} Sufficiency follows immediately from Alaoglu's theorem.

For necessity, pick a bounded sequence $\{f_j\} \subset F_\alpha ^p$ where $f_j \rightarrow 0$ uniformly on compact subsets of $\Cn$. Since $T$ is compact, passing to a subsequence if necessary, we can assume that $T f_j \rightarrow u$  in $X$.  Then since $f_j \rightarrow 0$ weakly by reflexitivity and Lemma \ref{l:Weak Compactness}, we have for any $v \in X^*$ that \begin{equation*} v(u)  = \lim_{j \rightarrow \infty} v(T f_j)  = \lim_{j \rightarrow \infty} v \circ T (f_j) = 0\end{equation*} which implies that $u = 0$.   \end{proof}

\begin{cor} \label{c: NormRepKerAndCompactness} If $1 \leq p < \infty$ and $T_\mu ^\alpha$ is compact on $F_\alpha ^p$ then \begin{equation*} \|T^\alpha_\mu (k^\alpha_z / \|k^\alpha_z\|_{F^p_\alpha})\|_{F^p_\alpha}
\rightarrow 0 \end{equation*} as $|z|\rightarrow \infty$. \end{cor}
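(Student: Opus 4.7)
The plan is to set $f_z := k_z^\alpha / \|k_z^\alpha\|_{F_\alpha^p}$, a unit vector in $F_\alpha^p$, and first show that $f_z \to 0$ uniformly on every compact subset of $\Cn$ as $|z| \to \infty$. Once this is in hand the corollary follows instantly from Lemma \ref{l:Compactness Criteria} when $1 < p < \infty$; the case $p = 1$ requires a separate subsequence argument since the necessary direction of Lemma \ref{l:Compactness Criteria} fails there.

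For the uniform convergence on compacts, I would write
\[
f_z(w) = \frac{K^\alpha(z,w)}{\sqrt{K^\alpha(z,z)}\,\|k_z^\alpha\|_{F_\alpha^p}}
\]
and combine the upper estimate $|K^\alpha(z,w)| \lesssim (1+|z||w|)^{|\alpha|/2} E(z,w)$ from Lemma \ref{weightedmodified} (using $|z\cdot\overline w| \leq |z||w|$ when $\alpha > 0$) with the diagonal asymptotic from Proposition \ref{weighteddiagonal} and the norm asymptotic from Proposition \ref{ksnorm}. These combine to give a pointwise bound
\[
|f_z(w)| \lesssim P(|z|,|w|) \, e^{\frac12 |w|^2 - \frac18 |z-w|^2}
\]
for some polynomial $P$. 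For $|w| \leq R$ and $|z|$ large, $|z-w| \geq |z| - R$, so the Gaussian factor $e^{-|z-w|^2/8}$ provides super-polynomial decay that swallows $P$, giving uniform convergence on $\{|w|\leq R\}$.

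For $1 < p < \infty$ the result is then immediate from Lemma \ref{l:Compactness Criteria} with $X = F_\alpha^p$ and $T = T_\mu^\alpha$. For $p = 1$, I would argue by contradiction: suppose $|z_n| \to \infty$ with $\|T_\mu^\alpha f_{z_n}\|_{F_\alpha^1} \geq \epsilon > 0$. Compactness furnishes a subsequence along which $T_\mu^\alpha f_{z_{n_k}}$ converges in norm to some $g$ with $\|g\|_{F_\alpha^1} \geq \epsilon$, and by Lemma \ref{mvplem} norm convergence promotes to pointwise convergence. It therefore suffices to show $T_\mu^\alpha f_{z_n}(w) \to 0$ for each fixed $w$. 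Expanding $T_\mu^\alpha f_{z_n}(w)$ via the integral representation from Lemma \ref{l:BilForm} and applying the pointwise bounds above, the integrand is dominated by $P'(|z_n|,|u|)\, e^{-|z_n-u|^2/8 - |w-u|^2/8}$ times an admissible weight; by the parallelogram identity
\[
|z_n - u|^2 + |w - u|^2 = 2\Bigl|u - \tfrac{z_n + w}{2}\Bigr|^2 + \tfrac12 |z_n - w|^2,
\]
this factors as $e^{-|z_n-w|^2/16} \cdot e^{-|u-(z_n+w)/2|^2/4}$. The $\mu$-integral of the centered Gaussian (against polynomial weights) is uniformly bounded in $z_n$ by the Fock-Carleson property (Theorem \ref{t:Carleson}), so $|T_\mu^\alpha f_{z_n}(w)| \lesssim_w (1+|z_n|)^C e^{-|z_n-w|^2/16} \to 0$, forcing $g \equiv 0$ and producing the contradiction.

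The hard part will be the bookkeeping in the $p = 1$ case: tracking all polynomial factors arising from Lemmas \ref{mvplem} and \ref{weightedmodified}, Propositions \ref{weighteddiagonal} and \ref{ksnorm}, and the Fock-Carleson tail estimate, and verifying that after the parallelogram decomposition the residual Gaussian $e^{-|z_n - w|^2/16}$ truly dominates them all. The $\alpha > 0$ subcase further requires the straightforward but fiddly Taylor-split expansion from Lemma \ref{l:BilForm}, which changes the form of the integrand but not the exponential estimates that drive the argument.
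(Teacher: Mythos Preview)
Your proposal is correct and follows essentially the same approach as the paper: establish uniform-on-compacts convergence $k_z^\alpha/\|k_z^\alpha\|_{F_\alpha^p} \to 0$ from the kernel estimates, invoke Lemma~\ref{l:Compactness Criteria} for $1 < p < \infty$, and for $p = 1$ run a subsequence argument reducing to pointwise decay of $T_\mu^\alpha f_{z_n}(w) = \langle T_\mu^\alpha f_{z_n}, K_w^\alpha\rangle_\alpha$, which is controlled by Gaussian factors via the kernel bounds and the Fock--Carleson property. The only cosmetic difference is that the paper first transfers the $\mu$-integral to Lebesgue measure via Theorem~\ref{t:Carleson} and then bounds $\int_{\Cn} e^{-\frac18(|u-z_j|^2+|w-u|^2)}\,dv(u)$ by a region-splitting argument, whereas you keep $d\mu$ and extract the decisive factor $e^{-|z_n-w|^2/16}$ directly via the parallelogram identity.
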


\begin{proof} By the estimates from Lemma \ref{weightedmodified}, Proposition \ref{weighteddiagonal}, and Proposition \ref{ksnorm} we  have that $k_z ^\alpha / \|k_z ^\alpha\|_{F_\alpha ^p}  \rightarrow 0$ uniformly on compact subsets of $\Cn$ as $|z| \rightarrow \infty$. Thus, the Corollary immediately follows from Lemma \ref{l:Compactness Criteria} if $1 < p < \infty$.

On the other hand, if $p = 1$ and $T_\mu ^\alpha$  is compact, then let $|z_j| \rightarrow \infty$ as $j \rightarrow \infty$. Passing to a subsequence if necessary, assume that $T_\mu ^\alpha (k_{z_j} ^\alpha / \|k_{z_j} ^\alpha\|_{F_\alpha ^1}) \rightarrow f$ in $F_\alpha ^1$ for some $f \in F_\alpha ^1$.  However, since the reproducing property holds for $f \in F_\alpha ^1$ and since $\langle \cdot, \cdot \rangle_\alpha$ is bounded on $F_\alpha ^1 \times F^\infty$, we have that \begin{align*} f(w) & = \langle f, K^\alpha (\cdot, w )  \rangle_\alpha \nonumber \\ & = \lim_{j \rightarrow \infty} \langle T_\mu ^\alpha (k_{z_j} ^\alpha / \|k_{z_j} ^\alpha\|_{F_\alpha ^1}), K^\alpha  (\cdot, w )  \rangle_\alpha \nonumber   \end{align*}

However, since $\mu$ is a weighted Fock-Carleson measure, Theorem \ref{t:Carleson} and Lemma \ref{l:BilForm}  gives us that \begin{equation} |\langle T_\mu ^\alpha (k_{z_j} ^\alpha / \|k_{z_j} ^\alpha\|_{F_\alpha ^1} ),  K^\alpha (\cdot, w )  \rangle_\alpha| \lesssim \incn (|k_{z_j} ^\alpha (u)| / \|k_{z_j} ^\alpha\|_{F_\alpha ^1}) |K^\alpha (\cdot, w )| e^{-|u|^2} \, dv_\alpha(u) \label{NormRepKerAndCompactnessEst1}. \end{equation} Moreover, plugging in the estimates from Lemma \ref{weightedmodified}, Proposition \ref{weighteddiagonal}, and Proposition \ref{ksnorm} into (\ref{NormRepKerAndCompactnessEst1}) we get that \begin{equation} |\langle T_\mu ^\alpha (k_{z_j} ^\alpha / \|k_{z_j} ^\alpha\|_{F_\alpha ^1}), K^\alpha ( \cdot, w )  \rangle_\alpha| \lesssim (1 + |w|)^\frac{\alpha}{2} (1 + |z_j|)^\frac{\alpha}{2}  e^{\frac{1}{2} |w|^2} \incn e^{-\frac{1}{8} (|u - z_j|^2 + |w - u|^2)} dv(u). \label{NormRepKerAndCompactnessEst2}  \end{equation}

However, it is very easy to see that the right hand side of (\ref{NormRepKerAndCompactnessEst2}) converges to $0$ (for $w$ fixed) as $j \rightarrow \infty$. In particular, assuming $|w| \leq \frac{1}{4} |z_j|$, if $|u - z_j| \leq \frac{1}{4} |z_j|$ then \begin{equation*} |z_j| \leq |z_j - u| + |u - w| + |w| \leq \frac{1}{2}|z_j| + |u - w|  \end{equation*} so that \begin{equation*} e^{-\frac{|u - w|^2}{8}} \leq e^{- \frac{|z_j|^2}{32}} \end{equation*} which completes the proof.   \end{proof}

%First, if $u \in B(z_j, \frac{|z_j|}{2})$ then \begin{equation*} |z_j| \leq |z_j - u| + |u - w| + |w| \leq \frac{1}{2}|z_j| + |u - w| + |w| \end{equation*}  so that \begin{equation*} (1 + |z_j|)^\alpha e^{\frac{1}{2} |w|^2} \int_{B(z_j, \frac{|z_j|}{2})} e^{-\frac{|u - z_j|^2}{8}} e^{-\frac{|w - u|^2}{8}} dv(u) \lesssim (1 + |z_j|)^{\alpha + 2n} e^{|w|^2} e^{-\frac{|z_j|^2}{2} }. \end{equation*}  On the other hand, if $|u - z_j| \geq \frac{|z_j|}{2}$ then \begin{equation*} (1 + |z_j|)^\alpha e^{\frac{1}{2} |w|^2} \int_{\C \backslash B(z_j, \frac{|z_j|}{2})} e^{-\frac{|u - z_j|^2}{8}} e^{-\frac{|w - u|^2}{8}} dv(u) \lesssim (1 + |z_j|)^\alpha e^{\frac{1}{2} |w|^2} e^{-\frac{1
% Needed!?!?!?
%Before we prove the next theorem, note that Lemma \ref{l:Weak Compactness} in conjunction with the estimates from Corollary %\ref{weightedmodified}, Proposition \ref{weighteddiagonal}, and Proposition \ref{ksnorm} gives us that $k_z ^\alpha / \|k_z %^\alpha\|_{F_\alpha ^p} $ converges to $0$ weak${}^*$

\begin{thm}\label{t:Vanishing Carleson} Let $\mu$ be a nonnegative Borel measure on
$\Cn$ and let $\alpha$ be a real number.  Then the following are equivalent for any $1<p<\infty$.

\begin{itemize}
\item[(a)] $T^\alpha_\mu$ extends to a compact operator on $F^p_\alpha$.

\item[(b)] $\tilde\mu$ is bounded and vanishes at infinity.

\item[(c)] The function $z \mapsto \mu (B(z,r))$ is bounded and vanishes at infinity for each $r > 0$.

\item[(d)] $\mu$ is a vanishing weighted Fock-Carleson measure for $F^p_\alpha$.
\end{itemize}

\end{thm}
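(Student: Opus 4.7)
The plan is to prove the chain $(a) \Rightarrow (b) \Rightarrow (c) \Rightarrow (d) \Rightarrow (a)$, mimicking the structure of the proof of Theorem \ref{t:Boundedness} but inserting vanishing-at-infinity throughout.

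For $(a) \Rightarrow (b)$, I would begin by noting that compactness implies boundedness, so $\tilde\mu$ is bounded by Theorem \ref{t:Boundedness}. To get vanishing at infinity, I would apply Corollary \ref{c:Berezin} to write $\tilde\mu(z)=\langle T_\mu^\alpha k_z^\alpha, k_z^\alpha\rangle_\alpha$ and then use the boundedness of $\langle\cdot,\cdot\rangle_\alpha$ on $F_\alpha^p \times F_\alpha^q$ together with Proposition \ref{ksnorm} to estimate
\begin{equation*}
\tilde\mu(z) \lesssim \|T_\mu^\alpha (k_z^\alpha/\|k_z^\alpha\|_{F_\alpha^p})\|_{F_\alpha^p} \cdot \|k_z^\alpha\|_{F_\alpha^p} \|k_z^\alpha\|_{F_\alpha^q} \approx \|T_\mu^\alpha (k_z^\alpha/\|k_z^\alpha\|_{F_\alpha^p})\|_{F_\alpha^p},
\end{equation*}
since $(1/2-1/p)\alpha + (1/2-1/q)\alpha = 0$. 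Corollary \ref{c: NormRepKerAndCompactness} then yields $\tilde\mu(z)\to 0$. The implication $(b) \Rightarrow (c)$ is immediate from Lemma \ref{l:berezin} for $|z|\geq 2r$ and from \eqref{Condition M} for $|z|\leq 2r$.

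For $(c)\Rightarrow (d)$, I would revisit the proof of the $(b) \Rightarrow (a)$ direction in Theorem \ref{t:Carleson} and adapt it to produce the vanishing condition. Given a bounded sequence $\{f_j\}\subset F_\alpha^p$ with $f_j \to 0$ uniformly on compact subsets of $\Cn$, the strategy is to fix a large radius $R>0$, split $\Cn = \{|z|\le R\} \cup \{|z|>R\}$, and handle each piece separately. On $\{|z|\leq R\}$, uniform convergence together with $\mu(\{|z|\leq R\}) < \infty$ (which follows from the boundedness part of (c) via a covering argument) makes the contribution arbitrarily small. On $\{|z|>R\}$, the submean value estimate of Lemma \ref{mvplem} combined with the Fubini swap from Theorem \ref{t:Carleson} leads, in the $\alpha>0$ case, to the bound
\begin{equation*}
\int_{|z|>R}|f_j^+_{\alpha/p}(z) e^{-|z|^2/2}|^p |z|^{-\alpha}\, d\mu(z)
\lesssim \Big(\sup_{|w|>R-r}\mu(B(w,r))\Big) \|f_j\|_{F_\alpha^p}^p,
\end{equation*}
which is small for $R$ large by hypothesis; the $f_j^-_{\alpha/p}$ part is handled via the Cauchy estimates exactly as in Theorem \ref{t:Carleson}, while the case $\alpha\le 0$ is simpler and analogous.

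Finally, $(d)\Rightarrow (a)$ is the duality step. For $1<p<\infty$, let $q$ be conjugate to $p$. Since $F_\alpha^q = (F_\alpha^p)^*$ under $\langle\cdot,\cdot\rangle_\alpha$ and since every vanishing weighted Fock-Carleson measure is in particular a weighted Fock-Carleson measure, I would apply Lemma \ref{l:BilForm} and Hölder's inequality to get
\begin{equation*}
\|T_\mu^\alpha f\|_{F_\alpha^p} \approx \sup_{\|g\|_{F_\alpha^q}=1}|\langle T_\mu^\alpha f, g\rangle_\alpha| \lesssim \|f\|_{L^p_{\alpha,\mu}\cap H(\Cn)} \cdot \sup_{\|g\|_{F_\alpha^q}=1}\|g\|_{L^q_{\alpha,\mu}\cap H(\Cn)} \lesssim \|f\|_{L^p_{\alpha,\mu}\cap H(\Cn)},
\end{equation*}
for $f$ a polynomial, with the last inequality following from the Carleson embedding for $F_\alpha^q$. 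Given a bounded sequence $\{f_j\}\subset F_\alpha^p$ with $f_j\to 0$ uniformly on compacts, (d) forces $\|f_j\|_{L^p_{\alpha,\mu}\cap H(\Cn)}\to 0$, hence $\|T_\mu^\alpha f_j\|_{F_\alpha^p}\to 0$, and Lemma \ref{l:Compactness Criteria} (sufficiency) then gives compactness. The main technical obstacle is the splitting argument in $(c)\Rightarrow(d)$, since one must carefully separate the polynomial piece $f_j^-_{\alpha/p}$ (bounded independently of $\mu$'s tail behavior via the Cauchy estimates) from the tail piece $f_j^+_{\alpha/p}$ (to which the vanishing averages apply), and control the small-$|z|$ region where $|z|^{-\alpha}$ is singular using only the local finiteness from \eqref{Condition M}.
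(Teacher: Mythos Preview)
Your chain of implications and the tools you invoke are exactly those of the paper, and for $\alpha\le 0$ every step goes through as you describe. There is one genuine technical gap in the $\alpha>0$ case of $(d)\Rightarrow(a)$: the H\"older-type bound
\[
|\langle T_\mu^\alpha f,g\rangle_\alpha|\ \lesssim\ \|f\|_{L^p_{\alpha,\mu}\cap H(\Cn)}\,\|g\|_{L^q_{\alpha,\mu}\cap H(\Cn)}
\]
is not a straight application of H\"older, because the bilinear form from Lemma~\ref{l:BilForm} splits $f$ and $g$ at Taylor level $\alpha/2$, whereas the norms $\|\cdot\|_{L^p_{\alpha,\mu}\cap H(\Cn)}$ and $\|\cdot\|_{L^q_{\alpha,\mu}\cap H(\Cn)}$ split at levels $\alpha/p$ and $\alpha/q$. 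The paper does not attempt to prove this inequality directly; instead it works inside the $\limsup_j$ and observes that $(f_j)^+_{\alpha/2}-(f_j)^+_{\alpha/p}$ is a fixed finite linear combination of homogeneous terms whose coefficients go to zero with $j$ (Cauchy estimates), so one may replace $(f_j)^+_{\alpha/2}$ by $(f_j)^+_{\alpha/p}$ before applying H\"older on $\{|z|>\delta\}$. You should either insert this polynomial-correction step, or else prove your factorization inequality separately (which amounts to the same bookkeeping).

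A smaller point in $(c)\Rightarrow(d)$: for $\alpha>0$ the statement ``uniform convergence together with $\mu(\{|z|\le R\})<\infty$ makes the contribution small'' is not enough on its own, since the integrand carries the singular weight $|z|^{-\alpha}$. You flag this correctly at the end, and the fix is exactly what the paper does: use the Cauchy estimates on the bounded family $\{f_j\}$ to get that $z\mapsto |(f_j)^+_{\alpha/p}(z)|^p|z|^{-\alpha}$ is equicontinuous at the origin, then peel off a ball $\{|z|<\delta\}$ before invoking uniform convergence on $\{\delta\le|z|\le R\}$.
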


\begin{proof}  As was mentioned in the introduction, condition (c) is independent of $r > 0$.  We will now prove
Theorem \ref{t:Vanishing Carleson} by showing that (a) $\Rightarrow$ (b) $\Rightarrow$ (c) $\Rightarrow$ (d)
$\Rightarrow$ (a).

(a) $\Rightarrow$ (b) : By proposition \ref{ksnorm}, we have that
\begin{eqnarray*}
\tilde\mu(z) &=& \langle T^\alpha_\mu k^\alpha_z, k^\alpha_z\rangle_\alpha \leq
\|T^\alpha_\mu (k^\alpha_z / \|k^\alpha_z\|_{F^p_\alpha})\|_{F^p_\alpha}
\|k^\alpha_z\|_{F^p_\alpha} \|k^\alpha_z\|_{F^q_\alpha} \\
&\approx& \|T^\alpha_\mu (k^\alpha_z / \|k^\alpha_z\|_{F^p_\alpha})\|_{F^p_\alpha}
(1+|z|)^{\left(\frac12 - \frac1p\right)\alpha} (1+|z|)^{\left(\frac12 -
\frac1q\right)\alpha}\\
 &=& \|T^\alpha_\mu (k^\alpha_z / \|k^\alpha_z\|_{F^p_\alpha})\|_{F^p_\alpha}
\rightarrow 0
\end{eqnarray*} where the last equality follows from Corollary \ref{c: NormRepKerAndCompactness}.

(b) $\Rightarrow$ (c) : This follows from Lemma \ref{l:berezin}.

\medskip

(c) $\Rightarrow$ (d) : We only prove this for $\alpha > 0$ since the proof for $\alpha \leq 0$ is similar (and in fact simpler).  Let $\{f_j\}$ be a bounded sequence in $F^p _\alpha$ such that $f_j
\rightarrow 0$  converges uniformly on every compact
subset of $\Cn$ as $j \rightarrow \infty$.

Now suppose that $\mu$ is a  weighted Fock-Carleson measure which satisfies
$$\mu (B(z,r)) \rightarrow 0$$ as $|z|\rightarrow \infty$ for some
$r>0$. Let $\{f_j\}$ be as above. To complete the proof, it suffices to show
$$\|f_j\|_{L^p_{\alpha, \mu}} \rightarrow 0$$ as $j\rightarrow \infty$. Let
$\varepsilon>0$ be given.  Furthermore, we only show that \begin{eqnarray*}\lim_{j \rightarrow \infty} \incn \left|(f_j)^+_{\frac\alpha p}(z)  e^{-\frac{1}{2} |z|^2}\right|^p |z|^{-\alpha} \, d\mu(z)  = 0 \end{eqnarray*} since the proof that \begin{equation*}  \lim_{j \rightarrow \infty} \incn \left|(f_j)^-_{\frac\alpha p} (z) e^{- \frac12 |z|^2}\right| ^p \,  d\mu(z) = 0 \end{equation*} is similar.

To that end, choose $R >2r$ such that $\mu(B(z, r)) < \varepsilon$ whenever $|z| > R/2$.  Since $\{\|f_j\|_{F_\alpha ^p}\}$ is bounded, we can easily use the Cauchy estimates to estimate the Taylor coefficients of $(f_j)^+_{\frac\alpha p} $ and conclude that $z \mapsto |(f_j)^+_{\frac\alpha p} (z)|^p |z|^{-\alpha}$ is equicontinuous at the origin, so there exists $0 < \delta < 1$ where \begin{equation*} \sup_{j \in \mathbb{N}, |z| < \delta}  |(f_j)^+_{\frac\alpha p} (z)|^p |z|^{-\alpha} < \frac{\varepsilon}{\mu(B(0, 1))}. \end{equation*}
  Moreover, since $(f_j)^+_{\frac\alpha p} \rightarrow 0$ uniformly on compact sets, we have that \begin{equation*} \limsup_{j \rightarrow \infty} \int_{\delta \leq |w| \leq R} |(f_j)^+_{\frac\alpha p} (w)|^p \, \frac{e^{-\frac p2|w|^2}}{ |w|^\alpha} \, d\mu(w) = 0. \end{equation*}
Now choose $s>0$ small enough that $\{B(a_j, r)\}$ covers $\Cn$, where $\{a_j\}$ is an
arrangement of the lattice set $s\mathbb{Z}^{2n}$. We denote by $\{b_j\}$ a
rearrangement of the set $\{a_j : |a_j| > R/2\}$. Then $\{B(b_j,r)\}$ covers $\{z:
|z| >R\}$, so
\begin{align*}
\limsup_{j \rightarrow \infty} \incn \left|(f_j)^+_{\frac\alpha p}(w)\right|^p   \frac{e^{-\frac{p}{2} |w|^2}}{|w|^\alpha} \, d\mu(w)  &= \limsup_{j \rightarrow \infty} \int_{|w|< \delta} |(f_j)^+_{\frac\alpha p}(w)|^p \, \frac{e^{-\frac p2|w|^2}}{|w|^\alpha}\,  d\mu(w) \\ & + \limsup_{j \rightarrow \infty} \int_{\delta \leq |w| \leq R} |(f_j)^+_{\frac\alpha p}(w)|^p \, \frac{e^{-\frac p2|w|^2}}{|w|^\alpha}   \, d\mu(w) \\ & +
\limsup_{j \rightarrow \infty} \int_{|w|>R} |(f_j)^+_{\frac\alpha p}(w)|^p \, \frac{e^{-\frac p2|w|^2}}{|w|^\alpha} \,  d\mu(w)\\
& \leq \varepsilon  + \limsup_{j \rightarrow \infty} \sum_k \int_{B(b_k,r)} |(f_j)^+_{\frac\alpha p}(w)|^p \,\frac{e^{-\frac p2|w|^2}}{|w|^\alpha}\,d\mu(w).
\end{align*}
Since $|w| > 2r$ if $|w| > R$, Lemma \ref{mvplem} tells us that there exists a constant $C$ such that
\begin{equation*}
|(f_j)^+_{\frac\alpha p}(w)|^p \,\frac{e^{-\frac p2|w|^2}}{|w|^\alpha} \leq C \int_{B(b_k, 2r)} |(f_j)^+_{\frac\alpha p}(z)|^p \, \frac{e^{-\frac p2|z|^2}}{|z|^\alpha} \,
dv(z)
\end{equation*}
for every $w\in B(b_k, r)$. Therefore,
\begin{align*}
\incn & \left|(f_j)^+ _{\frac\alpha p}(w)\right|^p   \frac{e^{-\frac{p}{2} |w|^2}}{|w|^\alpha} \, d\mu(w)  \\ & \leq 2\varepsilon +C \sum_k \mu(B(b_k, r))
\int_{B(b_k,2r)}|(f_j)^+ _{\frac\alpha p}(z)|^p \frac{e^{-\frac{p}{2} |z|^2}}{|z|^\alpha} \, dv(z).
\end{align*}
Since $\mu(B(b_k,r)) <\varepsilon$, for a suitable constant $C$, we have
\begin{align*} \limsup_{j \rightarrow \infty} \incn & \left|(f_j)^+ _{\frac\alpha 2}(w)\right|^p   \frac{e^{-\frac{p}{2} |w|^2}}{|w|^\alpha} \, dv(w) \\ & \leq \varepsilon +C \varepsilon \limsup_{j \rightarrow \infty} \sum_k
\int_{B(a_k,2r)}|(f_j)^+_{\frac\alpha p}(z)|^p  \frac{e^{-\frac{p}{2} |z|^2}}{|z|^\alpha} \, d\mu(z). \end{align*}
From the local finiteness of the covering $\{B(a_k, 2r)\}$, there exists a positive
integer $N$ such that
\begin{equation*}
\sum_k  \int_{B(a_k,2r)}|(f_j)^+_{\frac\alpha p} (z)|^p \,\frac{e^{-\frac{p}{2} |z|^2}}{|z|^\alpha} \, dv(z)  \leq N \|f_j\|_{F^p_\alpha}^p \leq N M^p.
\end{equation*}
Therefore, we have
\begin{equation*}   \limsup_{j \rightarrow \infty} \incn \left|(f_j)^+_{\frac\alpha p}(w)\right|^p   \frac{e^{-\frac{p}{2} |w|^2}}{|w|^\alpha} \, d\mu(w)  \leq \varepsilon + \varepsilon  C N M^p\end{equation*} which completes the proof since $\varepsilon$ is arbitrary.

\medskip

(d) $\Rightarrow$ (a) : We only prove this for $\alpha > 0$ since the case $\alpha \leq 0$ is similar. Also for convenience we will let $F_\alpha ^\infty := F^\infty$ for any $\alpha$.

 First we claim that Lemma \ref{l:BilForm} holds for $f \in F_\alpha ^p$ and $g \in F_\alpha ^q$.  To see this, let $\{f_j\}$ be a sequence of polynomials where $f_j \rightarrow f$ in $F_\alpha ^p$ and similarly pick $\{g_j\} \subset F^\infty$ where $g_j \rightarrow g$ in $F_\alpha ^q$.  Since $T_\mu ^\alpha $ is bounded (and thus $\mu$ is a weighted Fock-Carleson measure), we have that \begin{align*} \langle T^\alpha_\mu f, g\rangle_\alpha & = \lim_{j \rightarrow \infty} \langle T^\alpha_\mu f_j, g_j \rangle_\alpha \\ & =    \lim_{j \rightarrow \infty}  \incn (f_j) ^- _{\frac\alpha 2}(z) \overline{(g_j)^-_{\frac\alpha 2}(z)} e^{-|z|^2} d\mu(z)
+ \lim_{j \rightarrow \infty}  \incn (f_j) ^+ _{\frac\alpha 2}(z) \overline{(g_j) ^+_{\frac\alpha 2}(z)} \frac{e^{-|z|^2}}{|z|^\alpha} \, d\mu(z).                                                                                                                                                                                                                                                                                                                                                                                                                                                                                                                          \\ & =   \incn f ^- _{\frac\alpha 2}(z) \overline{g ^-_{\frac\alpha 2}(z)} e^{-|z|^2} \, d\mu(z)
+   \incn f ^+ _{\frac\alpha 2}(z) \overline{g ^+_{\frac\alpha 2}(z)} \frac{e^{-|z|^2}}{|z|^\alpha} \, d\mu(z).\end{align*}

Now let $\{f_j\}$ be a bounded sequence in
$F^p_\alpha$ converging to $0$ uniformly on compact subsets of $\Cn$. By Lemma \ref{l:Compactness Criteria}, it is enough to show that $\lim_{j \rightarrow \infty} \|T^\alpha_\mu f_j\|_{F^p_\alpha} = 0 $.  However, the previous paragraph gives us that
\begin{align*}
\|T^\alpha_\mu f_j\|_{F^p_\alpha} &= \sup\{|\langle T^\alpha_\mu f_j, g\rangle_\alpha|
:  \|g\|_{F^q_\alpha} \leq 1 \}  \\ & \leq  \sup\left\{\left| \incn (f_j) ^-_{\frac\alpha 2}(z) \overline{g^-_{\frac\alpha 2}(z)} e^{-|z|^2}  d\mu(z) \right| : \|g\|_{F^q_\alpha} \leq 1 \right\}  \\ & + \sup\left\{\left| \incn (f_j) ^+_{\frac\alpha 2}(z) \overline{g^+_{\frac\alpha 2}(z)} \frac{e^{-|z|^2}}{|z|^\alpha} \, d\mu(z) \right|:  \|g\|_{F^q_\alpha} \leq 1 \right\}
\\ &  (A_j) + (B_j)
\end{align*}

We only show that $\lim_{j \rightarrow \infty} B_j = 0$ since the proof that $\lim_{j \rightarrow \infty} A_j = 0$ is similar. Let $\varepsilon > 0$. Again by the standard Cauchy estimates, we have that  $z \rightarrow |(f_j) ^+_{\frac\alpha 2} (z) g^+_{\frac\alpha 2} (z)| |z| ^{-\alpha}$ is equicontinuous (with respect to both $j$ and $g$  where  $\|g\|_{F^q_\alpha} \leq 1$) at the origin.  Thus, there exists $\delta > 0$ such that \begin{equation*} \sup\left\{ \int_{|z| \leq \delta} \left| (f_j) ^+_{\frac\alpha 2}(z) g^+_{\frac\alpha 2}(z) \right| \frac{e^{-|z|^2}}{|z|^\alpha} \, d\mu(z) :  \|g\|_{F^q_\alpha} \leq 1 \text{ and } j \geq 1 \right\}  \leq \varepsilon. \end{equation*}

On the other hand, if \begin{equation*} f_j(z) = \sum_{k = 0}^\infty (f_j)_k (z) \end{equation*} is the homogenous expansion of $f_j$, then by the standard Cauchy estimates, H\"{o}lder's inequality, and the fact that $\mu$ is a weighted Fock-Carleson measure, we have that \begin{equation*} \lim_{j \rightarrow \infty} \int_{|z| > \delta}  \left| (f_j)_k (z) g^+_{\frac\alpha 2}(z) \right| \frac{e^{-|z|^2}}{|z|^\alpha} \, d\mu(z) = 0 \end{equation*} for each $k$.  Thus,
if $\|g\|_{F^q_\alpha} \leq 1$, then \begin{align*} \limsup_{j \rightarrow \infty} \int_{|z| > \delta}  \left| (f_j) ^+_{\frac\alpha 2}(z) g^+_{\frac\alpha 2}(z) \right| \frac{e^{-|z|^2}}{|z|^\alpha} \, d\mu(z) & = \limsup_{j \rightarrow \infty} \int_{|z| > \delta}  \left| (f_j) ^+_{\frac\alpha p}(z) g^+_{\frac\alpha 2}(z) \right| \frac{e^{-|z|^2}}{|z|^\alpha} \, d\mu(z) \\ & \lesssim \limsup_{j \rightarrow \infty} \incn  \left| (f_j) ^+_{\frac\alpha p}(z) g^+_{\frac\alpha 2}(z) \right| e^{-|z|^2}{(1 + |z|)^{-\alpha}} \, d\mu(z) \\ & \lesssim \limsup_{j \rightarrow \infty}  \|f_j \|_{L^p _{\mu, \alpha} \cap H(\Cn) } \|g \|_{F_\alpha ^q} = 0 \end{align*}  since $\mu$ is both a Carleson measure for $F^s_\alpha$ for every $ 1 \leq s < \infty$ by Theorem \ref{t:Boundedness} and a vanishing weighted Fock-Carleson measure for $F_\alpha ^p$ which immediately tells us that $\lim_{j \rightarrow \infty} B_j = 0$ and completes the proof.
\end{proof}

%%%%%%%%%%%%%%%%%%%%%%%%%%%%%%%%%%%%%%%%%%%%%%%%%%%%%%%%%%%%%%%%%%%%% {BCToepOp} {SectionSchatten}

\section{Charaterization of Schatten class membership} If $T$ is a positive \label{SectionSchatten}
compact operator on $F^2_\alpha$, then there exists an orthonormal set $\{e_j\}$ in
$F^2_\alpha$ and a non-increasing sequence $\{s_j\}$ of positive real numbers such
that
\begin{align*} T f = \sum_j s_j \langle f, e_j\rangle_\alpha e_j \end{align*} for every $f\in F^2_\alpha$.
The operator $T$
is said to belong to the \emph{Schatten class} $S^\alpha_p$ if the sequence
$\{s_j\}$ of eigenvalues belongs to $\ell^p$. We denote by
$\|T\|_{S^\alpha_p}$ the $\ell^p$-``norm'' $\left(\sum s_j^p\right)^{1/p}$ for
$0<p<\infty$. Note that $\|\cdot \|_{S^\alpha_p}$ is only a quasi-norm when $0 < p <
1$, though for convenience we will still sometimes refer to it as the $S_p ^\alpha$
norm.   If we do not assume that $T$ is positive, then we say that $T$ belongs to
$S^\alpha_p$ whenever $|T| = (T^*T)^{1/2}$ does, and if the $s_j$'s are the eigenvalues
of $|T|$, then we define $\|T\|_{S^\alpha_p} := \left(\sum_j s_j^p\right)^{1/p}$. Note that the space $S^\alpha_1$ is usually called the \emph{trace class}. For a positive operator
$T$, let $\mathrm{tr} (T)$ denote the usual trace defined by
\begin{align*}\mathrm{tr} (T) = \sum_j \langle Te_j, e_j\rangle_\alpha = \|T\|_{S_1 ^\alpha}.\end{align*}

\begin{lem}\label{l:Trace} If $T$ is a positive operator on $F^2_\alpha$, then
\begin{align*} \mathrm{tr}(T) \approx \incn \widetilde T (z)\, dv(z).\end{align*} where \begin{equation*} \widetilde{T} (z) = \langle T k_z ^\alpha, k_z ^\alpha \rangle_\alpha \end{equation*} is the Berezin transform of $T$.
In particular, $T$ is trace-class if and only if the integral above
converges.
\end{lem}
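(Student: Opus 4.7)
The plan is to use the spectral decomposition $T = \sum_j s_j \langle \cdot, e_j\rangle_\alpha e_j$ where $\{e_j\}$ is orthonormal in $F^2_\alpha$ with respect to $\langle \cdot, \cdot \rangle_\alpha$, and then compute the integral of $\widetilde{T}$ term by term. First I would note that by the reproducing property, $\langle e_j, K_z^\alpha \rangle_\alpha = e_j(z)$, so $\langle T K_z^\alpha, K_z^\alpha \rangle_\alpha = \sum_j s_j |\langle K_z^\alpha, e_j\rangle_\alpha|^2 = \sum_j s_j |e_j(z)|^2$. Dividing by $K^\alpha(z,z)$ gives
\begin{equation*}
\widetilde{T}(z) = \frac{1}{K^\alpha(z,z)} \sum_j s_j |e_j(z)|^2.
\end{equation*}

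Next, since $T$ is positive and every $s_j \ge 0$, I can apply Tonelli's theorem to interchange the sum and the integral:
\begin{equation*}
\incn \widetilde{T}(z)\, dv(z) = \sum_j s_j \incn \frac{|e_j(z)|^2}{K^\alpha(z,z)}\, dv(z).
\end{equation*}
By Proposition \ref{weighteddiagonal}, $K^\alpha(z,z) \approx (1+|z|)^\alpha e^{|z|^2}$, so $1/K^\alpha(z,z) \approx (1+|z|)^{-\alpha} e^{-|z|^2}$. Therefore
\begin{equation*}
\incn \frac{|e_j(z)|^2}{K^\alpha(z,z)}\, dv(z) \approx \incn |e_j(z)|^2 e^{-|z|^2}\, dv_\alpha(z) = \|e_j\|^2_{F^2_\alpha}.
\end{equation*}

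Finally, Lemma \ref{EqOfLpNorms} tells us that $\|\cdot\|_{F^2_\alpha}$ and $\|\cdot\|_{\widetilde{F^2_\alpha}}$ are equivalent norms on $F^2_\alpha$, and by the convention adopted just after Lemma \ref{EqOfLpNorms}, the latter coincides with the norm induced by $\langle \cdot, \cdot\rangle_\alpha$. Since $\{e_j\}$ is orthonormal with respect to $\langle \cdot, \cdot\rangle_\alpha$, we have $\|e_j\|_{\widetilde{F^2_\alpha}} = 1$ for every $j$, hence $\|e_j\|_{F^2_\alpha}^2 \approx 1$ uniformly in $j$. Combining everything,
\begin{equation*}
\incn \widetilde{T}(z)\, dv(z) \approx \sum_j s_j = \mathrm{tr}(T),
\end{equation*}
which is the desired equivalence; the last claim (that $T$ is trace-class iff the integral converges) follows immediately since both quantities are either simultaneously finite or simultaneously infinite.

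The main technical point to be careful about is the justification for interchanging the sum and the integral, but since $T \geq 0$ forces every term to be nonnegative, this is just Tonelli and requires no extra argument. The only other subtlety is that the equivalence constants in $K^\alpha(z,z) \approx (1+|z|)^\alpha e^{|z|^2}$ and in $\|\cdot\|_{F^2_\alpha} \approx \|\cdot\|_{\widetilde{F^2_\alpha}}$ are uniform in the basis element $e_j$, which is immediate from the statements of Proposition \ref{weighteddiagonal} and Lemma \ref{EqOfLpNorms} since those constants depend only on $\alpha$.
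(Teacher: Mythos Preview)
Your argument is correct when $T$ is compact, but the lemma is stated (and used in the paper) for an arbitrary positive bounded operator. The discrete spectral decomposition $T = \sum_j s_j \langle \cdot, e_j\rangle_\alpha e_j$ with orthonormal $\{e_j\}$ is only available once you already know $T$ is compact; a merely bounded positive operator can have continuous spectrum. This is not a cosmetic issue: the direction ``integral finite $\Rightarrow$ $T$ trace-class'' is exactly where you would need compactness in advance, which is circular. In the paper this lemma is applied (in the proof of Theorem~\ref{t:Schatten} for $0 < p \leq 1$, step (b) $\Rightarrow$ (a)) to $T^p$ with $T = T_\mu^\alpha$ known only to be bounded, in order to conclude from $\widetilde{T^p} \in L^1$ that $T^p$ is trace-class --- precisely the implication your argument does not cover.

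The paper's proof sidesteps this by writing $T = S^2$ with $S = T^{1/2}$ (available for any positive bounded $T$), computing $\mathrm{tr}(T) = \sum_j \|Se_j\|^2_{\langle\cdot,\cdot\rangle_\alpha}$ for a \emph{fixed} orthonormal basis $\{e_j\}$, passing to the $L^2_\alpha$ norm via Lemma~\ref{EqOfLpNorms}, interchanging by Tonelli, and then using the reproducing property $Se_j(z) = \langle e_j, SK_z^\alpha\rangle_\alpha$ together with Parseval to collapse $\sum_j |Se_j(z)|^2$ into $\|SK_z^\alpha\|^2_{\langle\cdot,\cdot\rangle_\alpha} = \langle TK_z^\alpha, K_z^\alpha\rangle_\alpha$. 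The skeleton of the computation --- Tonelli, Proposition~\ref{weighteddiagonal} for the diagonal, and the norm equivalence --- matches yours; the essential difference is that the square-root/Parseval maneuver replaces your eigenvector expansion and requires no compactness hypothesis.
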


\begin{proof} Let $T = S^2$ for some $S \geq 0$. Let $\{e_j\}$ be an orthonormal
basis for $F^2_\alpha$. Then by Fubini's theorem,
\begin{eqnarray*}
\mathrm{tr}(T)&=& \sum_j \langle Te_j, e_j \rangle_\alpha \approx \sum_j
\|Se_j\|_{F^2_\alpha}^2 = \sum_j \incn |Se_j(z)|^2 e^{-|z|^2} \,dv_\alpha(z)\\
&=& \incn \left(\sum_j|Se_j(z)|^2\right) e^{-|z|^2} \,dv_\alpha(z)\\
&=& \incn \left(\sum_j|\langle Se_j, K^\alpha_z\rangle_\alpha|^2\right) e^{-|z|^2}
\,dv_\alpha(z)\\
&=& \incn \left(\sum_j|\langle e_j, SK^\alpha_z\rangle_\alpha|^2\right) e^{-|z|^2}
\,dv_\alpha(z)\\
&\approx& \incn \|SK^\alpha_z\|^2_{F^2_\alpha} e^{-|z|^2} \,dv_\alpha(z)\\
&\approx&\incn \langle TK^\alpha_z, K^\alpha_z\rangle_\alpha e^{-|z|^2} \,dv_\alpha(z) =
\incn \widetilde T(z) K^\alpha(z,z) e^{-|z|^2} \,dv_\alpha(z)\\
&\approx& \incn \widetilde T(z) \, dv(z),
\end{eqnarray*}
where $K_z^\alpha (w) = K^\alpha(w,z)$ and the last estimate is due to Proposition
\ref{weighteddiagonal}.
\end{proof}

We will need one more result before we prove the main result of this section.  If $\mu$ is a positive Borel measure on $\Cn$ then let $\mu_1$ be defined by \begin{equation*} \mu_1 (E) = \mu(E \cap B(0, 1)) \end{equation*} for any Borel set $E \subset \Cn$ and let $\mu_2$ be defined by \begin{equation*} \mu_2 (E) = \mu(E \cap (\Cn \backslash B(0, 1)))\end{equation*} for any Borel set $E \subset \Cn$

\begin{lem} \label{l:trunToep}  If $\mu$ satisfies condition $(\ref{Condition M})$ then $T_{\mu_1} ^\alpha \in S_p ^\alpha$  for each $p \geq 1$.  \end{lem}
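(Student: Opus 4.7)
The plan is to show $T_{\mu_1}^\alpha \in S_1^\alpha$; once this is done, the standard inequality $\|T\|_{S_p^\alpha}^p \leq \|T\|_{\mathrm{op}}^{p-1}\|T\|_{S_1^\alpha}$ (valid for every positive operator and $p \geq 1$), together with $\|T\|_{\mathrm{op}} \leq \|T\|_{S_1^\alpha}$, immediately gives $T_{\mu_1}^\alpha \in S_p^\alpha$ for all $p \geq 1$.

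First I would verify that $\mu_1$ is a finite measure. Using the explicit formula \eqref{RepKerFormual}, one checks that $K^\alpha(0,w)$ is a nonzero constant in $w$, so applying \eqref{Condition M} at $z=0$ forces $\mu(\overline{B(0,1)}) < \infty$. Hence $\mu_1$ is trivially a weighted Fock-Carleson measure, and $T_{\mu_1}^\alpha$ is a bounded positive operator on $F_\alpha^2$ by Theorem \ref{t:Boundedness}.

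Since $T_{\mu_1}^\alpha \geq 0$, trace-class membership is equivalent to the finiteness of $\sum_\beta \langle T_{\mu_1}^\alpha \phi_\beta, \phi_\beta\rangle_\alpha$ for the monomial orthonormal basis $\phi_\beta(z) = z^\beta / \sqrt{\langle z^\beta, z^\beta\rangle_\alpha}$. The crucial simplification is that each $\phi_\beta$ is homogeneous of degree $|\beta|$, so one of $(\phi_\beta)^\pm_{\alpha/2}$ equals $\phi_\beta$ itself and the other is $0$, according as $|\beta| \leq \alpha/2$ or $|\beta| > \alpha/2$. Applying Lemma \ref{l:BilForm} with $f = g = \phi_\beta$ and swapping sum and integral by Tonelli, the sum reduces to
\begin{equation*}
\int_{\overline{B(0,1)}} \Bigl(P(w) + \tfrac{Q(w)}{|w|^\alpha}\Bigr)\, e^{-|w|^2}\, d\mu(w)
\end{equation*}
when $\alpha > 0$, where $P(w) = \sum_{|\beta|\leq\alpha/2}|\phi_\beta(w)|^2$ and $Q(w) = \sum_{|\beta|>\alpha/2}|\phi_\beta(w)|^2$; when $\alpha \leq 0$ the $P$-term is absent and the sum becomes $\int K^\alpha(w,w) e^{-|w|^2}/|w|^\alpha\,d\mu(w)$.

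Since $P + Q = K^\alpha(w,w) \lesssim (1+|w|)^\alpha e^{|w|^2}$ by Proposition \ref{weighteddiagonal}, the $\alpha \leq 0$ case is immediate: the integrand is bounded by $(|w|/(1+|w|))^{|\alpha|} \leq 1$ on $\overline{B(0,1)}$. The main obstacle is the $\alpha > 0$ case, where one must show $Q(w)/|w|^\alpha$ stays bounded on $\overline{B(0,1)}$. This follows from the key observation that every term $|w^\beta|^2$ in $Q$ vanishes at the origin to order $2|\beta| > \alpha$, so $Q(w)/|w|^\alpha = O(|w|^{2\lfloor\alpha/2\rfloor + 2 - \alpha}) \to 0$ as $w \to 0$; away from the origin, $Q(w) \leq K^\alpha(w,w) \lesssim e^{|w|^2}$ combined with $|w|^{-\alpha}$ bounded on $|w| \geq \delta > 0$ controls the rest. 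Boundedness of the integrand together with $\mu_1(\Cn) < \infty$ then completes the proof. The delicacy is entirely in this interplay between the vanishing order of $Q$ at the origin and the singular weight $|w|^{-\alpha}$, which is precisely the reason the inner product \eqref{neginn} was set up to split the Taylor expansion at order $\alpha/2$.
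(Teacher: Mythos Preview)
Your argument is correct and follows essentially the same line as the paper's: reduce to trace class, evaluate $\langle T_{\mu_1}^\alpha \phi_\beta, \phi_\beta\rangle_\alpha$ via Lemma~\ref{l:BilForm}, and show the resulting sum over $\beta$ is finite. The paper bounds each term $|w^\beta|^2/|w|^\alpha \leq 1$ on the unit ball (for $|\beta| > \alpha/2$) and then directly shows the coefficient series $\sum_{|\beta|>\alpha/2} \frac{\Gamma(n+|\beta|)}{\beta!\,\Gamma(n+|\beta|-\alpha/2)}$ converges, rather than summing first to $Q(w)$ and invoking Proposition~\ref{weighteddiagonal} as you do.

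One step in your write-up needs tightening: the implication ``every term $|w^\beta|^2$ in $Q$ vanishes to order $2|\beta| > \alpha$, so $Q(w)/|w|^\alpha = O(|w|^{2\lfloor\alpha/2\rfloor+2-\alpha})$'' is not automatic for an infinite sum. To close the gap, observe via the multinomial identity $\sum_{|\beta|=k}|w^\beta|^2/\beta! = |w|^{2k}/k!$ that $Q(w)$ is a convergent power series in $|w|^2$ with lowest-order term $|w|^{2k_0}$, where $k_0 = \lfloor\alpha/2\rfloor+1$; then $Q(w)/|w|^{2k_0}$ is itself an entire function of $|w|^2$, hence bounded on $\overline{B(0,1)}$, and $Q(w)/|w|^\alpha = |w|^{2k_0-\alpha}\cdot Q(w)/|w|^{2k_0}$. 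Alternatively, bounding $|w^\beta|^2/|w|^\alpha \leq 1$ termwise as the paper does yields exactly the same convergent coefficient series, so the two approaches are equivalent in content.
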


\begin{proof} We only prove this for $\alpha > 0$ since the proof for $\alpha \leq 0$ is similar.  Obviously it is enough to show that $T_{\mu_1} ^\alpha $ is trace class.  For each multiindex $\beta$, let $e_\beta$ be the standard normalized (with respect to $\langle \cdot, \cdot \rangle_\alpha$) monomial \begin{equation*}
e_\beta (z) = \begin{cases}
\sqrt{\frac{\Gamma (n + |\beta|)}{\beta !\Gamma (n + |\beta| - \frac{\alpha}{2})}} z^\beta & \text{ if } |\beta| > \frac{\alpha}{2} \\  (\beta!)^{-\frac{1}{2}} z^\beta & \text{ otherwise.} \end{cases} \end{equation*}

However, by Lemma \ref{l:BilForm}, we have that \begin{equation*} \langle T_{\mu_1} ^\alpha e_\beta, e_{\beta} \rangle_\alpha = \int_{|z| \leq 1} |(e_\beta)^-_{\frac\alpha 2}(z)|^2  e^{-|z|^2} d\mu(z)
+ \int_{|z| \leq 1} |(e_{\beta} ) ^+_{\frac\alpha 2}(z)|^2  \frac{e^{-|z|^2}}{|z|^\alpha} \, d\mu(z). \end{equation*}

Thus, we have that \begin{align*} \sum_{\beta} & \int_{|z| \leq 1} |(e_{\beta} ) ^+ _{\frac\alpha 2}(z)|^2  \frac{e^{-|z|^2}}{|z|^\alpha} \, d\mu(z)  \\ & =  \sum_{|\beta| > \frac{\alpha}{2}} \frac{\Gamma (n + |\beta|)}{\beta ! \Gamma (n + |\beta| - \frac{\alpha}{2})} \int_{|z| \leq 1} |z ^\beta |^2  \frac{e^{-|z|^2}}{|z|^\alpha} \, d\mu(z)\\ & \leq \mu(B(0, 1)) \sum_{|\beta| > \frac{\alpha}{2}}  \frac{\Gamma (n + |\beta|)}{\beta! \Gamma (n + |\beta| - \frac{\alpha}{2})}  < \infty. \end{align*}

Similarly, we have that \begin{equation*} \sum_{\beta} \int_{|z| \leq 1} |(e_\beta)^-_{\frac\alpha 2} (z) |^2 e^{-|z|^2} d\mu(z) < \infty \end{equation*} which completes the proof.
\end{proof}

\begin{thm}\label{t:Schatten} Let $\mu$ be a nonnegative Borel measure on $\Cn$ and let $\alpha$ be a real number. Then the following are equivalent for every $0<p<\infty$.
\begin{itemize}
\item[(a)] $T^\alpha_\mu \in S^\alpha_p$.

\item[(b)] $\tilde\mu \in L^p (\Cn, dv)$.

\item[(c)] The function $ z \mapsto \mu(B(z,r)) $ is in
$L^p (\Cn, dv)$ for any $r > 0$.

\item[(d)] For any $r > 0$, let $\{a_j\} = s \mathbb{Z}^{2n}$ where $0 < s < r\sqrt{2/n}$ so that $\{B(a_j, r)\}$ covers $\Cn$. Then the sequence
$\{\mu (B(a_j, r))\} \in \ell^p$.

\end{itemize}

\end{thm}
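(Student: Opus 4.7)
The plan is to prove the chain of implications (c) $\Leftrightarrow$ (d), then (b) $\Leftrightarrow$ (c), and finally (a) $\Leftrightarrow$ (b), treating the equivalences with (a) in three separate regimes: $p=1$, $1 < p < \infty$, and $0 < p < 1$. The geometric equivalence (c) $\Leftrightarrow$ (d) is essentially a covering argument independent of $\mu$: if $z \in B(a_j, r)$ then $B(z,r) \subseteq B(a_j, 2r)$, so integrating $\mu(B(\cdot,r))^p$ against $dv$ and using the local finiteness of the covering $\{B(a_j, 2r)\}$ yields comparability with $\sum_j \mu(B(a_j, r))^p$. The equivalence (b) $\Leftrightarrow$ (c) uses Lemma \ref{l:berezin} for the direction $\tilde\mu \in L^p \Rightarrow \mu(B(\cdot,r)) \in L^p$ (combined with condition \eqref{Condition M} to handle the ball $B(0, 2r)$), and for the converse direction I would use the upper kernel estimate in Lemma \ref{weightedmodified} together with Proposition \ref{weighteddiagonal} to bound $|k_z^\alpha(w)|^2 e^{-|w|^2}$ by a Gaussian-localized factor, then apply Lemma \ref{ebound}-style integration on the cover $\Cn = \bigcup B(a_j, r)$.

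For (a) $\Leftrightarrow$ (b) with $p=1$, the key ingredient is Lemma \ref{l:Trace} together with Corollary \ref{c:Berezin}: writing $\mu = \mu_1 + \mu_2$ (with $\mu_1$ supported in $B(0,1)$), Lemma \ref{l:trunToep} handles $T_{\mu_1}^\alpha$ automatically, while for $\mu_2$ the Berezin transform of the operator equals $\widetilde{\mu_2}$, so $\mathrm{tr}(T_{\mu_2}^\alpha) \approx \int \widetilde{\mu_2} \, dv$. For the range $1 < p < \infty$, the plan is to apply complex interpolation between the trace-class endpoint ($p=1$) and the bounded-operator endpoint ($p = \infty$ given by Theorem \ref{t:Boundedness}), after first reducing via the pointwise identity $\tilde\mu(z) = \langle T_\mu^\alpha k_z^\alpha, k_z^\alpha\rangle_\alpha$ and Proposition \ref{ksnorm}. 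The direction (a) $\Rightarrow$ (b) in this range uses the elementary Schatten inequality $\sum_j |\langle T e_j, e_j\rangle| \leq \|T\|_{S_p} \|\cdot\|_{S_q}$-type duality applied to $T = T_\mu^\alpha$ against the positive operator $\int (\cdot)(z) k_z^\alpha \otimes k_z^\alpha \, d\nu(z)$ for a suitable $\nu$.

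For the range $0 < p < 1$, the argument must go through the discrete characterization (d) rather than through duality or interpolation. The plan for (d) $\Rightarrow$ (a): using a smooth partition of unity $\{\varphi_j\}$ subordinate to the cover $\{B(a_j, 2r)\}$, decompose $T_\mu^\alpha = \sum_j T_{\varphi_j \mu}^\alpha$; each summand is approximately rank one with operator norm controlled by $\mu(B(a_j, r))$ (because $\varphi_j \mu$ is supported in a small ball), so its $S_p^\alpha$ quasi-norm is also $\lesssim \mu(B(a_j, r))$, and the sub-additivity $\|\sum S_j\|_{S_p}^p \leq \sum \|S_j\|_{S_p}^p$ valid for $0 < p \leq 1$ closes the estimate. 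For (a) $\Rightarrow$ (d), I would sandwich $T_\mu^\alpha$ between diagonal-type operators: letting $\{e_\beta\}$ be an orthonormal basis localized geometrically (or using the normalized reproducing kernels at the lattice points), a standard Luecking-type discretization shows $\mu(B(a_j, r)) \lesssim \langle T_\mu^\alpha \tilde k_{a_j}^\alpha, \tilde k_{a_j}^\alpha\rangle_\alpha$, and the $\ell^p$ summability then follows from the Schatten-class sum-of-diagonal-entries inequality (\cite{Zhu1}, Prop.~1.31 or analogous).

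The main obstacle will be the $0 < p < 1$ regime, specifically performing the Luecking discretization cleanly in the presence of the bifurcated inner product when $\alpha > 0$: the matrix element $\langle T_\mu^\alpha \tilde k_z^\alpha, \tilde k_z^\alpha\rangle_\alpha$ naturally splits via Lemma \ref{l:BilForm} into the $(\cdot)^-_{\alpha/2}$ and $(\cdot)^+_{\alpha/2}$ pieces, and one must verify that the lower bound from Proposition \ref{t:lowerbound} on a ball $B(a_j, r)$ with $|a_j|$ bounded away from $0$ transfers to both pieces separately. A secondary nuisance is that the interpolation step for $1 < p < \infty$ must be carried out with the modified sesquilinear form $\langle \cdot, \cdot\rangle_\alpha$ rather than the natural $L^2$ inner product, so the intermediate duality $(F_\alpha^p)^* = F_\alpha^q$ from \cite{CCK2} (established via $\langle \cdot, \cdot\rangle_\alpha$) must be invoked at the right place to match the abstract Schatten-class duality.
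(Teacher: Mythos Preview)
Your overall architecture (establish (c)\,$\Leftrightarrow$\,(d) geometrically, then (b)\,$\Leftrightarrow$\,(c) via the kernel estimates, then link to (a)) is reasonable and partly matches the paper, but the $0<p<1$ regime contains two genuine gaps.

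First, your (d)\,$\Rightarrow$\,(a) argument does not go through as stated. The operator $T^\alpha_{\varphi_j\mu}$ with symbol supported in a single ball $B(a_j,2r)$ is \emph{not} ``approximately rank one'': it is an infinite-rank positive operator, and for $0<p<1$ there is no way to control $\|T^\alpha_{\varphi_j\mu}\|_{S^\alpha_p}$ by its operator norm or its trace alone (indeed $\sum s_k^p$ can be arbitrarily large relative to $\sum s_k$). The paper avoids this entirely by proving (b)\,$\Rightarrow$\,(a) instead, using the operator-concavity inequality $\langle T^p k^\alpha_z, k^\alpha_z\rangle_\alpha \le \langle T k^\alpha_z,k^\alpha_z\rangle_\alpha^{\,p}=\tilde\mu(z)^p$ valid for $0<p\le 1$ (Proposition~6.3.3 in \cite{Zhu1}) together with Lemma~\ref{l:Trace}; this is a one-line reduction once the trace formula is in place.

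Second, your (a)\,$\Rightarrow$\,(d) sketch invokes a ``sum-of-diagonal-entries'' inequality for the family $\{k^\alpha_{a_j}\}$, but these vectors are \emph{not} orthonormal, so no such inequality is available. This, rather than the bifurcated inner product, is the real obstacle. The paper's device is: thin the lattice to a sparse subset $\{b_j\}$ with pairwise distance $>2R$, conjugate $T^\alpha_\nu$ (where $\nu=\sum_j\mu\chi_{B(b_j,r)}$) by the bounded frame operator $A:e_j\mapsto k^\alpha_{b_j}$ of Lemma~\ref{FrameOpBdd}, and split $A^*T^\alpha_\nu A=D+E$ into diagonal and off-diagonal parts with respect to $\{e_j\}$. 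The diagonal piece $D$ gives the desired lower bound $\|D\|_{S^\alpha_p}^p\gtrsim\sum_j\mu(B(b_j,r))^p$ via Proposition~\ref{t:lowerbound}, while the off-diagonal piece is estimated by Zhu's Lemma~6.36, $\|E\|_{S^\alpha_p}^p\le\sum_{i\neq j}|\langle T^\alpha_\nu k^\alpha_{b_i},k^\alpha_{b_j}\rangle_\alpha|^p$, and the Gaussian kernel decay in Lemma~\ref{weightedmodified} makes this $\lesssim e^{-pR^2/16}\sum_j\mu(B(b_j,r))^p$. Choosing $R$ large absorbs $E$ into $D$. Your proposal does not identify this diagonal/off-diagonal splitting or the role of lattice thinning, which is the technical heart of the $0<p<1$ case.

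For $1\le p<\infty$ your plan is roughly aligned with the paper, though the paper's (a)\,$\Rightarrow$\,(b) is simpler than your duality-against-$B_\nu$ idea: it uses the reverse convexity inequality $\widetilde{T^p}\ge\tilde\mu^{\,p}$ for $p\ge 1$ and reads off $\|\tilde\mu\|_{L^p}^p\le\mathrm{tr}(T^p)$ directly from Lemma~\ref{l:Trace}. For (c)\,$\Rightarrow$\,(a) the paper interpolates the map $\varphi\mapsto T^\alpha_{|\varphi|}$ between $L^1\to S^\alpha_1$ and $L^\infty\to S^\alpha_\infty$, then sets $\varphi(z)=\mu(B(z,r))$ and proves the operator inequality $\langle T^\alpha_{\mu_2}f,f\rangle_\alpha\lesssim\langle T^\alpha_\varphi f,f\rangle_\alpha$ via Fubini and Lemma~\ref{mvplem}; your description of the interpolation step is too vague to see whether you intended this comparison with the averaged symbol.
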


%%%%%%%%%%%%%%%%%%%%%%%%%%%%%%%%%%%%%%%%%%%%%%%%%%%%%%%%%%%%%%%%
Note that the equivalence between (c) and (d) can essentially be found in \cite{IZ}.  For convenience, we divide the rest of the proof of Theorem \ref{t:Schatten} into two parts.  In particular, we first finish the proof of the theorem when $1
\leq p < \infty$ and then finish the proof of the theorem when $0 < p < 1$. For the proof when $0 < p < 1$, we will need the following simple lemma that is well known in the classical Fock space setting (see \cite{Zhu}).

\begin{lem} \label{FrameOpBdd}  Let $s > 0$ and let $\{e_j\}$ be any orthonormal basis for $F_\alpha ^2$.  If $\{\xi_j\} \subset s \mathbb{Z}^{2n}$ and $A$ is the operator on $F _\alpha ^2$ defined by $A e_j := k_{\xi_j}$ then $A$ extends to a bounded operator on all of $F_\alpha ^2$ whose operator norm is bounded above by a constant that only depends on $s$.
\end{lem}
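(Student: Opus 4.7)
The plan is to prove this by duality, reducing matters to a Bessel-type inequality for the normalized reproducing kernels at the lattice points $\{\xi_j\}$. First, I would introduce the analysis operator $B : F_\alpha^2 \to \ell^2$ defined by
\begin{equation*}
B f := \bigl( \langle f, k_{\xi_j}^\alpha \rangle_\alpha \bigr)_j .
\end{equation*}
Once the boundedness of $B$ with operator norm depending only on $s$ is established, I would observe that if $U : F_\alpha^2 \to \ell^2$ is the unitary sending each $e_j$ to the $j$-th standard basis vector $\delta_j$ of $\ell^2$, then a direct computation gives $B^\ast \delta_j = k_{\xi_j}^\alpha$. Consequently $A = B^\ast U$ on the dense subspace spanned by $\{e_j\}$, so $A$ extends to a bounded operator with $\|A\| \leq \|B\|$.

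The main step is therefore to establish the Bessel-type inequality
\begin{equation*}
\sum_j \bigl| \langle f, k_{\xi_j}^\alpha \rangle_\alpha \bigr|^2 \lesssim \|f\|_{F_\alpha^2}^2 ,
\end{equation*}
with implicit constant depending only on $s$. By the reproducing property and the normalization $k_z^\alpha = K_z^\alpha/\sqrt{K^\alpha(z,z)}$, one has $|\langle f, k_{\xi_j}^\alpha \rangle_\alpha|^2 = |f(\xi_j)|^2 / K^\alpha(\xi_j,\xi_j)$, and Proposition \ref{weighteddiagonal} then shows this quantity is comparable to $|f(\xi_j)|^2 (1 + |\xi_j|)^{-\alpha} e^{-|\xi_j|^2}$. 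Fixing any convenient $t > 0$ once and for all, Lemma \ref{mvplem} applied with $p = 2$ and $a = 1$ bounds each summand by
\begin{equation*}
\int_{B(\xi_j,\, t)} |f(w)|^2 e^{-|w|^2} \, dv_\alpha(w) .
\end{equation*}

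Summing over $j$, the key geometric fact is that the balls $\{B(\xi_j, t)\}$ indexed by points of $s \mathbb{Z}^{2n}$ have bounded overlap with a multiplicity $N$ depending only on $s$ and the fixed $t$; hence
\begin{equation*}
\sum_j \int_{B(\xi_j,\, t)} |f(w)|^2 e^{-|w|^2} \, dv_\alpha(w) \leq N \int_{\Cn} |f(w)|^2 e^{-|w|^2} \, dv_\alpha(w) = N \|f\|_{F_\alpha^2}^2 ,
\end{equation*}
which yields the Bessel inequality with a constant depending only on $s$. The only point requiring care will be the bookkeeping of constants: the multiplicity of the lattice covering, the implied constant from Proposition \ref{weighteddiagonal}, and the constant in Lemma \ref{mvplem} must all be checked to produce a final estimate that is uniform in $f$ and independent of the choice of orthonormal basis $\{e_j\}$, but since $\alpha$ and $t$ are fixed throughout, this presents no genuine obstacle.
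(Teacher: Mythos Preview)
Your proposal is correct and follows essentially the same approach as the paper: both arguments reduce to the Bessel-type inequality $\sum_j |\langle g, k_{\xi_j}^\alpha\rangle_\alpha|^2 \lesssim \|g\|_{F_\alpha^2}^2$ and prove it via the reproducing property, Proposition \ref{weighteddiagonal}, Lemma \ref{mvplem}, and the bounded overlap of lattice-centered balls. The only cosmetic difference is that the paper estimates $|\langle Af,g\rangle_\alpha|$ directly by Cauchy--Schwarz, while you package the same computation as $A = B^\ast U$.
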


\begin{proof}  If $f, g \in F_\alpha ^2$, then the Cauchy-Schwarz inequality, Proposition \ref{weighteddiagonal}, Lemma \ref{mvplem}, and the reproducing property  gives us that \begin{align*} |\langle Af, g\rangle| & \leq \sum_{j = 1} ^\infty |\langle f, e_j\rangle_\alpha   \langle k_{\xi_j}, g\rangle_\alpha| \\ & \leq \|f\|_{F_\alpha ^2} \left(\sum_{j = 1} ^\infty  |\langle k_{\xi_j}, g\rangle_\alpha|^2\right)^\frac{1}{2} \\ & \approx  \|f\|_{F_\alpha ^2} \left(\sum_{j = 1} ^\infty |g(\xi_j)|^2 e^{-|\xi_j|^2} (1 + |\xi_j|)^{-\alpha} \right)^\frac{1}{2} \\ & \lesssim \|f\|_{F_\alpha ^2} \left(\sum_{j = 1} ^\infty \int_{B(\xi_j, r)} |g(u)|^2  e^{-|u| ^2 }  \, dv_\alpha (u)\right)^\frac{1}{2} \lesssim \|f\|_{F_\alpha ^2} \|g\|_{F_\alpha ^2} \end{align*} \end{proof}

\subsection{Proof of Theorem \ref{t:Schatten} for the case $ 1\leq p < \infty$}

(a) $\Rightarrow$ (b) : Note that a positive operator $T$ belongs to $S^\alpha_p$ if
and only if $T^p \in S^\alpha_1$. Let $T:=T^\alpha_\mu$. By Proposition 6.3.3 in
\cite{Zhu1},
$$\widetilde {T^p} (z) = \langle T^p k^\alpha_z, k^\alpha_z \rangle_\alpha \geq \langle T
k^\alpha_z, k^\alpha_z \rangle_\alpha^p = (\tilde\mu(z))^p.$$ Therefore, the
conclusion follows since
$$\infty > \mathrm{tr}(T^p) \approx \|\widetilde {T^p}\|_{L^p(\Cn,dv)} \geq
\|\tilde\mu\|_{L^p(\Cn,dv)}^p$$ by Lemma \ref{l:Trace}.

\medskip

(b) $\Rightarrow$ (c) : This is immediate by Lemma \ref{l:berezin}.

\medskip

(c) $\Rightarrow$ (a) :  We only prove this for $\alpha > 0$ since the case $\alpha \leq 0$ is similar. Furthermore, since condition (c) is independent of $r > 0$ (see \cite{IZ}), we can assume that $r < \frac{1}{2}$.  Let
$\{e_j\}$ be the standard orthonormal basis of normalized monomials for
$F^2_\alpha$. If $\varphi \in L^\infty(\Cn)$, and $T^\alpha_\varphi := T^\alpha_{\varphi\, dv}$, then $T^\alpha_\varphi$ is bounded on $F^2_\alpha$ and
\begin{align*} \langle T^\alpha_{|\varphi|} e_j, e_j \rangle_\alpha &=   \incn |(e_j) ^- _{\frac\alpha 2}(z)|^2  |\varphi(z)| e^{-|z|^2} dv(z)
\\ & + \incn|(e_j) ^+ _{\frac\alpha 2}(z)|^2  |\varphi(z)| \frac{e^{-|z|^2}}{|z|^\alpha} \, dv(z) \lesssim \|\varphi\|_{L^\infty(\Cn)}. \end{align*}
Therefore, $\|T^\alpha_{|\varphi|}\|_{S^\alpha_\infty} \leq
\|\varphi\|_{L^\infty(\Cn)}.$
Moreover, if $\varphi \in L^1(\Cn, dv)$, then
\begin{align*}
\sum_j |\langle T^\alpha_{|\varphi|} e_j, e_j \rangle_\alpha| &\leq \incn \sum_j |(e_j) ^- _{\frac\alpha 2}(z)|^2  |\varphi(z)| e^{-|z|^2} dv(z)
\\ & + \incn \sum_j |(e_j) ^+ _{\frac\alpha 2}(z)|^2  |\varphi(z)| \frac{e^{-|z|^2}}{|z|^\alpha} \, dv(z) \\
&\lesssim \incn |\varphi(z)|\, dv(z)
\end{align*}
by Proposition \ref{weighteddiagonal}.
Thus, by complex interpolation, we see that $T^\alpha_{|\varphi|} \in S^\alpha_p$ if
$\varphi \in L^p(\Cn,dv)$, and
\begin{equation}\label{e:Schatten lp} \| T^\alpha_{|\varphi|}\|_{S^\alpha_p}
\lesssim \|\varphi\|_{L^p (\Cn, dv)}
\end{equation}
for every $1\leq p\leq \infty$.

Now let $\varphi(z) := \mu(B(z,r)) \in L^p (\Cn, dv)$ and let $ C := \|\varphi\|_{L^p (\Cn, dv)}.$ Then for every $w\in \Cn$,
$$\int_{B(w, r/2)} (\varphi(z))^p \,dv(z) \leq C.$$ Since $\mu (B(w, r/2)) \leq \mu(B(z,r))$ for every $z\in B(w,
r/2)$, we see that
$$  \mu (B(w, r/2)) \lesssim  1$$ for every $w\in \Cn$.  Thus, Theorem \ref{t:Boundedness} gives us that the Toeplitz operators $T^\alpha_\varphi$ and $T^\alpha _\mu$ are bounded on
$F^2_\alpha$. By \eqref{e:Schatten lp} and Lemma \ref{l:trunToep}, it suffices to show that
\begin{align*} \langle T^\alpha_{\mu_2} f, f\rangle_\alpha \lesssim \langle T^\alpha_\varphi f,
f\rangle_\alpha\end{align*} for every polynomial $f$. However,
\begin{align*}
\langle T^\alpha_\varphi f, f\rangle_\alpha &=   \incn |f ^- _{\frac\alpha 2}(w)|^2     \varphi(w) e^{-|w|^2} dv(w)
\\ & + \incn|f ^+ _{\frac\alpha 2}(w)|^2  \varphi(w) \frac{e^{-|w|^2}}{|w|^\alpha} \, dv(w)
\\ & = \int_{\Cn} |f ^- _{\frac\alpha 2}(w)|^2  \mu(B(w, r)) e^{-|w|^2} dv(w)
\\ & + \int_{\Cn} |f ^+ _{\frac\alpha 2}(w)|^2  \mu(B(w, r)) \frac{e^{-|w|^2}}{|w|^\alpha} \, dv(w)
\\ & \geq \int_{|z| \geq 1 } \int_{B(z, r)} |f ^- _{\frac\alpha 2}(w)|^2   e^{-|w|^2} \, dv(w) \, d\mu(z)
\\ & + \int_{|z| \geq 1} \int_{B(z, r)} |f ^+ _{\frac\alpha 2}(w)|^2   \frac{e^{-|w|^2}}{|w|^\alpha} \, dv(w) \, d\mu(z)
\end{align*}
by Fubini's theorem.  Combining this with Lemma \ref{mvplem} and the fact that $r \leq \frac12$, we have that \begin{align*} \langle T^\alpha_\varphi f, f\rangle_\alpha  & \gtrsim \int_{|z| \geq 1} |f ^- _{\frac\alpha 2}(z)|^2   e^{-|z|^2} \, d\mu(z)
\\ & + \int_{|z| \geq 1} |f ^+ _{\frac\alpha 2}(z)|^2   \frac{e^{-|z|^2}}{|z|^\alpha}  \, d\mu(z)
\\ & = \langle T^\alpha _{\mu_2} f, f \rangle_\alpha. \end{align*}

%%%%%%%%%%%%%%%%%%%%%%%%%%%%%%%%%%%%%%%%%%%%%%%%%%%%%%%%%%%%%%%%%%%

\subsection{Proof of Theorem \ref{t:Schatten} for the case $0 < p \leq 1$}

(b) $\Rightarrow$ (c) : This follows immediately from Lemma \ref{l:berezin}.

\medskip

(d) $\Rightarrow$ (b) : Suppose that $\alpha > 0$.  We only show that \begin{equation*} \incn \left(\incn |(k^\alpha_z)_{\frac{\alpha}{2}} ^+ (w)|^2 \frac{e^{-|w|^2}}{|w|^\alpha} \, d\mu(w) \right)^p \, dv(z) < \infty \end{equation*} since the other term in the definition of $\widetilde{\mu}$ can be shown to be in $L^p(\Cn, dv)$ by a similar argument.  As in Lemma \ref{l:trunToep}, write $\mu = \mu_1 + \mu_2$ so that $\tilde \mu = \widetilde {\mu_1} + \widetilde{\mu_2} $.
For every $z, w\in\Cn$, we have that
$$(1+|z\cdot \overline w|)^\alpha \leq (1+|z||w|)^\alpha \leq (1+|z|)^\alpha
(1+|w|)^\alpha.$$ If $|w| \geq 1$ then this implies that
$$
|k^\alpha_z (w)|^2 \frac{e^{-|w|^2}}{|w|^\alpha} \lesssim  e^{-\frac{1}{4}|z-w|^2}
$$
by Lemma \ref{weightedmodified} and Proposition \ref{weighteddiagonal}. Therefore,
$$ \incn |(k^\alpha_z)_{\frac{\alpha}{2}} ^+ (w)|^2 \frac{e^{-|w|^2}}{|w|^\alpha} \, d{\mu_2} (w)   \lesssim \sum_j
\int_{B(a_j,r)}   e^{-\frac{1}{4}|z-w|^2} d\mu(w).$$   If $w\in
B(a_j,r)$, then $|z-w|^2 \geq  |z-a_j|^2
-2r|z-a_j|,$  so that
\begin{align*} \incn |(k^\alpha_z)_{\frac{\alpha}{2}} ^+ (w)|^2 \frac{e^{-|w|^2}}{|w|^\alpha} \, d{\mu_2} (w)   \lesssim \sum_j  \mu(B(a_j,r))
e^{-\frac{1}{4}|z-a_j|^2 + \frac{r}{2}|z-a_j|}.\end{align*}  Since $0<p\leq1$,
\begin{align*} \incn & \left(\incn |(k^\alpha_z)_{\frac{\alpha}{2}} ^+ (w)|^2 \frac{e^{-|w|^2}}{|w|^\alpha} \, d{\mu_2} (w)  \right)^p dv (z) \\ & \lesssim \sum_j  \mu(B(a_j,r))^p \incn
e^{-\frac{p}{4}|z-a_j|^2 + \frac{pr}{2}|z-a_j|}\,dv(z) < \infty. \end{align*}

Now we consider $\widetilde{\mu_1}$.  Note that if condition (d) is true, then one can easily show that $T_\mu ^\alpha$ is bounded.  Thus, we may assume that $|z| \geq 1$ since $\widetilde{\mu_1}$ is continuous. By Proposition \ref{weighteddiagonal} and the definition of $(k_z ^\alpha) ^+ _{\frac{\alpha}{2}}$ we have that \begin{align*}|(k^\alpha_z)_{\frac{\alpha}{2}} ^+ (w)| \frac{e^{-\frac12 |w|^2}}{|w|^\frac{\alpha}{ 2}}  & \leq  \frac{ e^{-\frac12 |z|^2}}{|z|^{\frac{\alpha}{2}} |w|^{\frac{\alpha}{2}}} \sum_{k > \frac{\alpha}{2}} \frac{ k^{\frac{\alpha}{2}} |z \cdot \overline{w}|^k}{k!}  \\ & \lesssim  \frac{ e^{- \frac12 |z|^2}}{|z|^{\frac{\alpha}{2}}} \sum_{k > \frac{\alpha}{2}} \frac{k^{\frac{\alpha}{2}} |z|^k}{k!} \approx |z|  e^{|z| - \frac12 |z|^2}   \end{align*}  if $|w| < 1$.  Thus, we have that \begin{align*} \int_{|z| \geq 1} & \left(\incn |(k^\alpha _z)_{\frac{\alpha}{2}} ^+ (w)|^2 \frac{e^{-|w|^2}}{|w|^\alpha} \, d{\mu_2} (w)  \right)^p dv (z) \\ & \leq \mu(B(0, 1)) \incn  |z| ^2 e^{2|z| - |z|^2} \, dv(z) < \infty. \end{align*}

Now let $\alpha \leq 0$.  Again, since $\tilde \mu$ is continuous, it suffices to show that $\tilde \mu \in L^p (\{|z| >1\}, dv)$. Now assume that $|z| >1$ and note that $(1 +
|z||w|)^\alpha \approx (1+|z|)^\alpha (1+|w|)^\alpha$ if $|w| >r$. Therefore, if
$|a_j| \geq 2r$ and $w\in B(a_j,r)$, then Lemma \ref{weightedmodified} and Proposition
\ref{weighteddiagonal} gives: $$|k^\alpha_z (w)|^2 e^{-|w|^2} \lesssim (1+|w|)^\alpha e^{-\frac{1}{4}|z-w|^2}.$$
If $|w| <3r$, then since $(1+|z||w|)^\alpha \leq 1$, we see that
$$|k^\alpha_z (w)|^2 e^{-|w|^2} \lesssim (1+|z|)^{|\alpha|}
e^{-\frac{1}{4}|z-w|^2}$$ by Lemma \ref{weightedmodified} and Proposition
\ref{weighteddiagonal} again. Thus, we have
\begin{eqnarray*}
\tilde\mu(z) &=& \incn |k^\alpha_z(w)|^2 |w| ^{|\alpha|} e^{-|w|^2} d\mu(w)\\
&\lesssim& \int_{|w|<3r}(1+|z|)^{|\alpha|} e^{-\frac{1}{4}|z-w|^2} d\mu(w)\\
& & +
\sum_{|a_j|\geq 2r} \int_{B(a_j,r)}   e^{-\frac{1}{4}|z-w|^2}
d\mu(w)\\
&\lesssim& (1+|z|)^{|\alpha|} e^{-\frac{1}{4}|z|^2 +\frac{3r}{2}|z| } \mu(B(0, 3r)
)\\
& &+ \sum_j  \mu(B(a_j,r)) e^{-\frac{1}{4}|z-a_j|^2 +
\frac{r}{2}|z-a_j|}.
\end{eqnarray*} Since the integrals
$$ \incn (1+|z|)^{|\alpha|p} e^{-\frac{p}{4}|z|^2 +\frac{3pr}{2}|z| } dv(z)$$ and
$$ \incn e^{-\frac{p}{4}|z-a_j|^2 + \frac{pr}{2}|z-a_j|}dv(z) =  \incn
e^{-\frac{p}{4}|z|^2 + \frac{pr}{2}|z|}dv(z)$$ are finite, $\tilde\mu \in L^p
(\{|z|>1\}, dv)$.

Now since (c) $\Leftrightarrow$ (d), we have that (b), (c), and (d) are equivalent.  To finish the proof, we will show that (b) $\Rightarrow$ (a) and (a) $\Rightarrow$ (d).
\medskip

(b) $\Rightarrow$ (a) : As we have already seen in (d) $\Rightarrow$ (b) for $1 \leq p < \infty$, the
operator $T:=T^\alpha_\mu$ is a positive bounded operator on $F^2_\alpha$. Note that
$T$ belongs to $S^\alpha_p$ if and only if $T^p$ is trace-class. Therefore,
$T$ belongs to $S^\alpha_p$ if $$\widetilde{T^p}(z) = \langle T^p k^\alpha_z,
k^\alpha_z \rangle_\alpha \in L^1(\Cn, dv)$$ by Lemma \ref{l:Trace}. Since
$$\langle T^p k^\alpha_z, k^\alpha_z \rangle_\alpha \leq \langle T k^\alpha_z,
k^\alpha_z \rangle_\alpha^p = (\tilde\mu (z))^p$$ for $0<p\leq 1$ (cf. Proposition 6.3.3
in \cite{Zhu1}), it follows that $T^\alpha_\mu \in S^\alpha_p$ if $\tilde \mu \in
L^p(\Cn, dv)$.

\medskip

(a) $\Rightarrow$ (d) : Assume without loss of generality that $r<1/2$. Since the
set $\{a_j : |a_j| \leq 2\}$ is finite, it suffices to show that the sequence
$$\{ \mu(B(a_j,r)) : |a_j| >2 \}$$ is in $\ell^p$. We will also only prove that (a) $\Rightarrow$ (d) when $\alpha > 0$ since the case $\alpha \leq 0$ is similar.  If $R >r $, then there exists a positive integer $N =N(R)$ and a partition $S_1
\cup\cdots \cup S_N$ of $\{a_j : |a_j| > 2\}$ such that $|a_i-a_j| >2R$ whenever
$i\neq j$ and $a_i, a_j \in S_k$ for some $k =1,...,N$. Let $S = \{b_j\}$ be one of the sets
$S_k$ and let
$$\nu := \sum_{j=1}^\infty \mu\,\chi_{B(b_j, r)}$$ where $\chi_{B(b_j, r)}$ is the
characteristic function on $B(b_j,r)$. Since $\nu \leq \mu$, $T^\alpha_\nu$ also
belongs to the Schatten class $S^\alpha_p$. Let $\{e_j\}$ be an orthonormal basis
for $F^2_\alpha$ and let $A$ be the bounded operator on $F^2_\alpha$ defined by
$A e_j = k^\alpha_{b_j}$ for $j=1,2,...$. Then the operator $T :=A^* T^\alpha_\nu
A \in S^\alpha_p$ and $\|T\|_{S^\alpha_p} \lesssim  \|T^\alpha_\nu\|_{S^\alpha_p}$ (by Lemma \ref{FrameOpBdd}). Let
$T = D + E$ where $D$ is the diagonal part of $T$ defined by
$$D f = \sum \langle T e_j, e_j \rangle_\alpha \langle f, e_j\rangle_\alpha e_j$$
and $E = T-D$. Note both $D$ and $E$ are compact. Since $0<p\leq 1$, we have that
that
\begin{equation}\label{e:TDE}\|T\|_{S^\alpha_p}^p \geq \|D\|_{S^\alpha_p}^p -
\|E\|_{S^\alpha_p}^p.
\end{equation}
From the definition of $D$ we obtain

\begin{align*}
\|D\|_{S^\alpha_p}^p &= \sum_{j=1}^\infty \langle Te_j, e_j\rangle_\alpha^p =
\sum_{j=1}^\infty \langle T^\alpha_\nu k^\alpha_{b_j},
k^\alpha_{b_j}\rangle_\alpha^p
\\ & = \sum_j \left(\incn |(k_{b_j} ^\alpha ) ^- _{\frac{\alpha}{2}} (w) | ^2 e^{- |w|^2} \,  d\mu(w)\right)^p
\\ & + \sum_j \left(\incn |(k_{b_j} ^\alpha ) ^+ _{\frac{\alpha}{2}} (w) | ^2 \frac{e^{- |w|^2}}{|w|^\alpha} \,  d\mu(w)\right)^p
\\ & \gtrsim \sum_j \left(\int_{B(b_j,r)} |k^\alpha _{b_j}(w)|^2 \frac{e^{-|w|^2}}{|w|^{\alpha}} \, d\nu(w)\right)^p
\end{align*} since $|w| \geq 1$.  Morever, since
$$|k^\alpha_{b_j}(w)|^2 = \frac{|K^\alpha(b_j,w)|^2}{|K^\alpha(b_j,b_j)|} \gtrsim
(1+|w|)^\alpha e^{|w|^2}$$ if $w\in B(b_j,r)$ by Propositions
\ref{weighteddiagonal} and
 \ref{t:lowerbound}, we conclude that
\begin{equation}\label{e:Estimates D}
\|D\|_{S^\alpha_p}^p \geq C_1 \sum_j \{ \mu (B(b_j,r))\}^p
\end{equation} for $r > 0$ small enough and some constant $C_1 >0$ independent of $R$.

By Lemma 6.36 in \cite{Zhu},
\begin{eqnarray*}
\|E\|_{S^\alpha_p}^p &\leq& \sum_{i=1}^\infty \sum_{j=1}^\infty |\langle E e_i,
e_j\rangle_\alpha|^p = \sum_{i\neq j} |\langle T^\alpha_\nu k^\alpha_{b_i},
k^\alpha_{b_j}\rangle_\alpha|^p \\
&=& \sum_{i\neq j} \left| \int_{|z| > 1}  (k^\alpha_{b_i})_{\frac{\alpha}{2}} ^- (z)\overline{(k^\alpha_{b_j})_{\frac{\alpha}{2}} ^- (z)}
e^{-|z|^2}  \, d\nu(z)\right|^p\\
& + & \sum_{i\neq j} \left| \int_{|z|>1} (k^\alpha_{b_i}) _{\frac{\alpha}{2}} ^+ (z)\overline{(k^\alpha_{b_j})_{\frac{\alpha}{2}} ^+  (z)}
\frac{e^{-|z|^2}}{|z|^{\alpha}} \, d\nu(z)\right|^p.
\end{eqnarray*} The last equality follows from the fact that $\nu =0$ on
$\{|z|<1\}$. Note
that if $|z| >1$ and $|w| >1$, then
\begin{equation}
\label{e:Upper estimates}
|K^\alpha (z,w)|
\lesssim (1+|z|)^{\frac\alpha 2}(1+|w|)^{\frac\alpha 2}
e^{\frac{1}{2}|z|^2 +\frac{1}{2}|w|^2 - \frac{1}{8}|z-w|^2}
\end{equation}
by Lemma \ref{weightedmodified}. Combining \eqref{e:Upper
estimates} with Proposition \ref{weighteddiagonal}, we see that
\begin{align*}
|(k^\alpha_{b_i})^+ _\frac{\alpha}{2} (z) {(k^\alpha_{b_j})^+ _\frac{\alpha}{2} (z)}| e^{-|z|^2}
\lesssim (1+|z|)^\alpha e^{ - \frac{1}{8}|z-b_i|^2 } e^{ - \frac{1}{8}|z-b_j|^2 }.
\end{align*}
Therefore,
\begin{align*}
& \|E\|_{S^\alpha_p}^p \\ &\lesssim \sum_{i\neq j} \left(  \incn  e^{
- \frac{1}{8}|z-b_i|^2} e^{ - \frac{1}{8}|z-b_j|^2}  d\nu(z)\right)^p\\
&= \sum_{i\neq j}\left(\sum_l \int_{B(b_l,r)} e^{ -
\frac{1}{8}|z-b_i|^2} e^{ - \frac{1}{8}|z-b_j|^2  } d\mu(z)\right)^p\\
&\leq \sum_{i\neq j}\sum_l \left(\int_{B(b_l,r)}   e^{ -
\frac{1}{8}|z-b_i|^2} e^{ - \frac{1}{8}|z-b_j|^2}  d\mu(z)\right)^p \label{sum}
\end{align*} since $0<p\leq 1$.

 Moreover, since $i\neq j$, we have that $|b_i-b_j| >2R$. Therefore, either $|z-b_i|
>R$ or $|z-b_j| >R$ for every $z\in \Cn$, which implies that
$$e^{ - \frac{1}{8}|z-b_i|^2  - \frac{1}{8}|z-b_j|^2} \leq e^{-\frac{1}{16}R^2} e^{
- \frac{1}{16}|z-b_i|^2  - \frac{1}{16}|z-b_j|^2}.$$  Therefore,
$$\|E\|_{S^\alpha_p}^p \lesssim e^{-\frac{pR^2}{16}} \sum_{i\neq j}\sum_l  \left(\int_{B(b_l,r)} e^{ - \frac{1}{16}|z-b_i|^2  - \frac{1}{16}|z-b_j|^2} d\mu(z)\right)^p.$$ For $z\in B(b_l,r)$, if $l\neq i$, then
$$|z -b_i| \geq |b_l - b_i | -r \geq \frac{1}{2} |b_l -b_i|,$$ since $r <R <|b_l
-b_i|/2$. If $l=i$, then the inequality above holds obviously. Therefore,
\begin{eqnarray*}
\|E\|_{S^\alpha_p}^p &\lesssim& e^{-\frac{pR^2}{16}} \sum_l \{
\mu (B(b_l,r))\}^p \sum_{i\neq j} e^{ - \frac{p}{64}|b_l-b_i|^2  -
\frac{p}{64}|b_l-b_j|^2}\\
&\leq& e^{-\frac{pR^2}{16}} \sum_l \{\mu (B(b_l,r))\}^p
\left(\sum_j  e^{ - \frac{p}{64}|b_l-b_j|^2 }\right)^2\\
&\leq& e^{-\frac{pR^2}{16}} \sum_l \{ \mu (B(b_l,r))\}^p
\left(\sum_j  e^{ - \frac{p}{64}|b_l-a_j|^2 }\right)^2\\
&=& e^{-\frac{pR^2}{16}} \sum_l \{\mu (B(b_l,r))\}^p
\left(\sum_j  e^{ - \frac{p}{64}|a_j|^2 }\right)^2.
\end{eqnarray*}
Since the last series converges, we conclude that
\begin{equation}\label{e:Estimates E}
\|E\|_{S^\alpha_p}^p \leq C_2 e^{-\frac{pR^2}{16}} \sum_l  (\mu
(B(b_l,r)))^p
\end{equation} for some constant $C_2 < \infty$ independent of $R$.

 Therefore, if $R$ if chosen where $C_1 - C_2 e^{-\frac{pR^2}{16}}  >0$, then it follows that
$$\sum_j  \{\mu (B(b_j,r))\}^p \lesssim \|T_\mu ^\alpha\|_{S_p ^\alpha} ^p $$ from \eqref{e:TDE},
\eqref{e:Estimates D} and \eqref{e:Estimates E} so that
$$\sum_{j : |a_j| > 2}  \{\mu (B(a_j,r))\}^p \lesssim \|T_\mu ^\alpha\|_{S_p ^\alpha} ^p$$ for all $\mu$ where $\sum_{j : |a_j| > 2}  \{\mu (B(a_j,r))\}^p < \infty$ since $N$ only depends on $R$.  However, an easy approximation argument then gives us that $\{\mu (B(a_j,r))\} \in \ell^p$ for all $\mu$ such that $T_\mu ^\alpha \in S_p ^\alpha$.

\section{Remarks and Open Problems} \label{SectionRAOP}

As was remarked in the introduction, it would be interesting to know if one can prove Theorems A, B, and C for Toeplitz operators with nonnegative Borel measure symbols that are defined in terms of the reproducing kernel of $F_\alpha ^2$ with respect to the inner product \eqref{CanInnProd}. Note that as usual, the main difficulty in doing this is obtaining the necessary estimates for the reproducing kernel defined with respect to the inner product \eqref{CanInnProd}. Furthermore, it would be quite interesting to know if there is a direct operator theoretic argument that allows one to directly transfer information about Toeplitz operators defined with respect to one Hilbert space inner product to a Toeplitz operator defined with respect to a different Hilbert space inner product (where the norms these inner products generate are equivalent.) In particular, it would be interesting to know if one can employ operator theoretic techniques to prove Theorems A, B, or C (or even portions of these theorems) for Toeplitz operators defined with respect to $\langle \cdot, \cdot \rangle_{L_\alpha ^2}$ directly from Theorems A, B, or C, respectively.

We should remark that Theorems A,  B, and C are known to be true for Toeplitz operators with nonnegative Borel measure symbols on a very wide class of generalized Fock spaces.  More precisely, let $d^c =  \frac{i}{4} (\overline{\partial} - \partial)$ and let $d$ be the usual exterior derivative. Let $\phi \in C^2(\Cn)$ be a real valued function on $\Cn$  such that \begin{equation} c \omega_0 < d d^c \phi < C \omega_0 \label{PhiCond} \end{equation} holds uniformly pointwise on $\Cn$ for some positive constants $c$ and $C$ (in the sense of positive $(1, 1)$ forms) where $\omega_0 = d d^c |\cdot |^2$ is the standard Euclidean K\"{a}hler form.  For $0 < p < \infty$, let $L_\phi ^p$ be the space of measurable $h$ on $\Cn$ where \begin{equation*} \|h\|_{L_\phi ^p} := \incn \left|h(z) e^{- \phi(z)} \right|^p \, dv(z) < \infty \end{equation*} and let $F_\phi ^p$ be the so called ``generalized Fock space" introduced in \cite{SV} and defined by \begin{equation*} F_\phi ^p  := \{ f \in H(\Cn) : \|f\|_{L_\phi ^p} < \infty \}. \end{equation*} If $K(z, w)$ is the reproducing kernel of $F_\phi ^2$ (with respect to the canonical inner product) and $\mu$ is a positive Borel measure on $\Cn$, then define the Toeplitz operator $T_\mu ^\phi$ by \begin{equation*} T_\mu ^\phi f (z) := \incn K(z, w) f(w) e^{-2\phi(w)}\, dv(w). \end{equation*}

\noindent Then the following was proved in \cite{IVW, SV}:

\begin{thm} \label{SVthm} If $\phi \in C^2(\Cn)$ satisfies condition (\ref{PhiCond}) and $\mu \geq 0$, then the following are equivalent:
\begin{itemize}
\item[(a)] The Toeplitz operator $T^\phi _\mu$ extends to a bounded operator on
$F^p _\phi$.
\item[(b)] The Berezin transform $\tilde\mu$ is bounded on $\Cn$.
\item[(c)] The function $z \mapsto \mu (B(z,r)) $ is bounded for
every $r > 0$.
\item[(d)] $\mu$ is a Carleson measure for $F_\phi ^p$ for any $1 \leq  p < \infty$.
\end{itemize}

Furthermore, the natural corresponding ``little oh'' conditions characterize compact Toeplitz operators and vanishing Carleson measures on $F_\phi ^p$ for $1 \leq p < \infty$, and the natural corresponding $\ell^p$ and $L^p$ conditions characterize Schatten $p$ class Toeplitz operators on $F_\phi ^2$ for $0 < p < \infty$.
\end{thm}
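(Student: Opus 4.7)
The plan is to reduce everything to good estimates on the reproducing kernel $K(z,w)$ of $F^2_\phi$ under the hypothesis $c\omega_0 < dd^c\phi < C\omega_0$, and then run essentially the same machinery as in Sections \ref{BCToepOp} and \ref{SectionSchatten} of the present paper. The unexplicit nature of $K$ means the formula-based estimates in Section \ref{RepKerSec} (e.g.\ Lemma \ref{weightedmodified}) are unavailable; replacing them is where the real work lies.

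First I would establish the weighted sub-mean value inequality
\[
|f(z)|^p e^{-p\phi(z)} \lesssim \int_{B(z,r)} |f(w)|^p e^{-p\phi(w)}\,dv(w)
\]
for $f$ entire and $r$ bounded, which is the analog of Lemma \ref{mvplem} in this setting. The curvature bound $dd^c\phi\approx\omega_0$ lets one compare $e^{-2\phi}$ to a translated standard Gaussian on balls of bounded radius after absorbing a local pluriharmonic potential into a holomorphic factor, and the classical Fock sub-mean value estimate then passes through. This immediately makes $F^2_\phi$ a reproducing kernel Hilbert space.

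Second, I would derive the three essential kernel estimates: the diagonal asymptotic $K(z,z)\approx e^{2\phi(z)}$, the off-diagonal Gaussian decay
\[
|K(z,w)| \lesssim e^{\phi(z)+\phi(w)-\delta|z-w|^2}
\]
for some $\delta>0$, and the near-diagonal lower bound $|K(z,w)|\gtrsim e^{\phi(z)+\phi(w)}$ for $w\in B(z,r)$ with $r$ small. The upper bound comes from H\"{o}rmander's weighted $L^2$ estimates for $\bar\partial$ applied to a cutoff at $z$ (the Christ--Delin construction), the diagonal lower bound from the extremal characterization $K(z,z)=\sup\{|f(z)|^2/\|f\|^2_{F^2_\phi}\}$ together with an explicit peak section obtained by correcting a smooth cutoff near $z$ via $\bar\partial$, and the near-diagonal lower bound from these via a gradient inequality of the form used in the proof of Proposition \ref{t:lowerbound}.

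Given those kernel estimates, the rest of the program transfers essentially verbatim to the generalized Fock setting: the Carleson characterization (the forward direction using normalized reproducing kernels $k_z^\phi=K(\cdot,z)/\|K(\cdot,z)\|$ as test functions, the reverse direction using the sub-mean value inequality and a covering argument), the boundedness and compactness equivalences (via the Berezin transform together with Schur's test applied with the Gaussian decay of $K$), and the Schatten characterization (complex interpolation between $S_1$ and $S_\infty$ for $1\le p<\infty$, and the diagonal/off-diagonal matrix decomposition of Section \ref{SectionSchatten} for $0<p<1$) all rely only on the diagonal estimate, the off-diagonal Gaussian decay, and the near-diagonal lower bound, not on any explicit formula for $K$. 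The main obstacle, and the only step requiring genuinely new ideas beyond what is already present in this paper, is establishing the sharp Gaussian off-diagonal decay; once that is in hand, every subsequent step is a routine adaptation of the arguments already presented.
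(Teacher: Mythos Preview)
The paper does not contain a proof of Theorem \ref{SVthm}; it is quoted in Section \ref{SectionRAOP} purely as a known result from \cite{IVW, SV}, so there is nothing in this manuscript to compare your proposal against. Your outline is in fact a faithful high-level summary of how those references proceed: the weighted sub-mean value inequality, the kernel estimates via H\"{o}rmander-type $\bar\partial$ methods (diagonal asymptotic, Gaussian off-diagonal decay, near-diagonal lower bound), and then the Carleson/Berezin/Schatten machinery exactly as in Sections \ref{BCToepOp} and \ref{SectionSchatten}. You have correctly identified the off-diagonal Gaussian decay as the step requiring genuinely new input beyond what is in the present paper.
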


Note that our weighted Fock spaces $F_\alpha ^p$ clearly do not fall fall under the class of weighted Fock spaces dealt with in Theorem \ref{SVthm} since the factor $(1 + |z|)^{-\alpha}$ in the definition of $F_\alpha ^p$ is \textit{not} being raised to the $p$ power.  However, note that the weighted Fock space $F_{p \alpha} ^p$ for any $\alpha \in \mathbb{R}$ \textit{does} in fact fall under these classes of weighted Fock spaces. In particular, by a standard closed graph theorem argument, we have that $f \in F_{p \alpha} ^p$ if and only if $z \mapsto (A + |z|^2 )^\frac{\alpha}{2} e^{-\frac{1}{2} |z|^2} f(z)  $ is in $L^p(\Cn, dv)$ for any $A > 0$, and furthermore the canonical norm induced by this condition (for fixed $A > 0$) is equivalent to the $F_{p \alpha} ^p$ norm.   Thus, if \begin{equation*} \phi(z) := \frac{1}{2} |z|^2 - \frac{\alpha}{2} \ln (A + |z|^2)  \end{equation*} then a relatively straight forward computation (see \cite{I} for details)  tell us that $\phi$ satisfies (\ref{PhiCond}) if $A > 2\alpha$.

However, treating $F_{p \alpha } ^p$ as a generalized Fock space, we obviously have that the reproducing kernel is completely different than the one in Theorems A, B, and C, and thus the Berezin transform in the statement of Theorem \ref{SVthm} for $F_{p \alpha } ^p$ is considerably different than the one in the statement of Theorems A,  B, and C.  In other words, Theorem \ref{SVthm} does \textit{not} imply Theorems A, B, and C even for the weighted Fock spaces $F_{p \alpha } ^p$.  Taking this into account, it is interesting that the exact same conditions in both Theorems A, B, C, and Theorem \ref{SVthm} characterize the boundedness, compactness, and Schatten class membership of Toeplitz operators.

\section*{Acknowledgements}The authors would like to thank Brett Wick for bringing to our attention the paper \cite{SV}.
%%%%%%%%%%%%%%%%%%%%%%%%%%%%%%%%%%%%%%%%%%%%

\end{document}